\numberwithin{equation}{section}
\newtheorem{theorem}{Theorem}[section]
\newtheorem{proposition}[theorem]{Proposition}
\newtheorem{lemma}[theorem]{Lemma}
\newtheorem{corollary}[theorem]{Corollary}
\theoremstyle{definition}
\newtheorem{definition}[theorem]{Definition}
\newtheorem{example}[theorem]{Example}
\newtheorem{remark}[theorem]{Remark}
\newcommand\<{\langle}
\newcommand\NN{{\mathbb N}}
\newcommand\ZZ{{\mathbb Z}}
\newcommand\QQ{{\mathbb Q}}
\newcommand{\bDelta}{{\Delta^c}}
\newcommand{\hull}{\operatorname{Hull}}
\newcommand{\Sat}{{\operatorname{Sat}}}
\newcommand\kk{{\mathbbm{k}}}
\newcommand\rank{{\rm rank}}
\newcommand\chr{{\rm char}}
\newcommand\Memb{M_{\rm emb}}
\newcommand{\chark}{{\rm char (\kk)}}
\def\endrk{\hfill$\hexagon$}
\newcommand*{\defeq}{\mathrel{\vcenter{\baselineskip0.5ex \lineskiplimit0pt
                     \hbox{\scriptsize.}\hbox{\scriptsize.}}}%
                     =}
\newcommand\minus{\smallsetminus}
\renewcommand\>{\rangle}
\begin{document}
\mbox{}
\vspace*{-12mm}
\title{Decompositions of cellular binomial ideals} 

\author{Zekiye Sahin Eser}
\address{Department of Mathematics \\
Texas A\&M University \\ College Station, USA.}
\curraddr{Department of Mathematics Education \\ Zirve University \\
  Gaziantep, Turkey.}
\email{zekiye.eser@zirve.edu.tr}

\author{Laura Felicia Matusevich}
\address{Department of Mathematics \\
Texas A\&M University \\ College Station, USA.}
\email{laura@math.tamu.edu}

\thanks{
The authors were partially supported by NSF Grant DMS 1001763.
}

\subjclass[2010]{Primary: 13F99, 13C05 ; Secondary: 13P99, 52B20}

\begin{abstract}
Without any restrictions on the base field, we compute the hull
and prove a conjecture of Eisenbud and Sturmfels giving an unmixed
decomposition of a cellular binomial 
ideal. 
Over an algebraically closed field, we further obtain an explicit (but not
necessarily minimal) primary decomposition of such an ideal.
\end{abstract}
\maketitle

\vspace*{-12mm}
\section{Introduction}
\label{sec:introduction}

%

A \emph{binomial} is a polynomial with at most two terms; a \emph{binomial
ideal} is an ideal generated by binomials. 
From an algebro-geometric point of view, the varieties associated to
binomial ideals are unions of (translates of) toric
varieties~\cite{ES}, and
thus of much interest. Combinatorially, binomial ideals already
contain the class of monomial ideals, a cornerstone of
combinatorial commutative algebra~\cite{MS, St}. 
Moreover, equations with two terms are used in a variety of applied
contexts where the primary decompositions of the corresponding
binomial ideals carry important information (see for
example~\cite{DSS, HHHKR, SS, GM} and the survey~\cite{M}).

%
%

The systematic study of the primary decomposition of binomial ideals
was initiated by Eisenbud and Sturmfels in~\cite{ES}. Over an
algebraically closed field, they proved that the associated primes and
primary components of binomial ideals can be chosen
binomial. These results, while constructive, are not very explicit.
%
%
%
%
Combinatorial descriptions of the primary components of a binomial
ideal were provided by Dickenstein, Matusevich and Miller~\cite{DMM},
in terms of connected components of graphs with infinitely many
vertices, under the assumption that the base field is algebraically closed of
characteristic zero. Later
Kahle and Miller~\cite{KM} removed all base field assumptions, and
introduced new decompositions of binomial ideals, 
called \emph{mesoprimary decompositions}, that are built
to capture the combinatorial structure of a binomial ideal, and from which
binomial primary decompositions can be recovered.
The results in~\cite{DMM} and~\cite{KM} 
are important and useful for theoretical purposes,
but are currently computationally infeasible.

The goal of this article is to strike a balance between the
computational and combinatorial points of view, by providing binomial
primary decompositions that are algorithmically computable but significantly more
explicit than those in~\cite{ES}.
To do this, we concentrate on \emph{cellular binomial ideals}, modulo
which every variable is either nilpotent or a nonzerodivisor.
We remark that binomial primary decomposition can be reduced to the cellular case, as
any binomial ideal has a binomial cellular
decomposition~\cite[Theorem~6.2 or Proposition~7.2]{ES}.

Our first main result, Theorem~\ref{thm:cellularPrimDecV2}, is a
description of the minimal primary components of a cellular binomial
ideal over an algebraically closed field. We explicitly give
all monomials in the desired
components; the binomials are also described, but only
up to saturation of the nonzerodivisor variables. The special case
when the cellular binomial ideal 
has a unique minimal prime and the base field is algebraically closed
of characteristic zero had already been proved by Kahle~\cite{KM2}.

Our second main result, Theorem~\ref{thm:cellularHull}, is an
expression for the \emph{hull} (the intersection of the minimal
primary components) of a cellular binomial 
ideal $I$ as a sum $I + M$, where $M$ is an explicit monomial ideal
(in fact, the same monomial ideal appearing in
Theorem~\ref{thm:cellularPrimDecV2}). We can think of
Theorem~\ref{thm:cellularHull} as an alternative proof of the fact
that the hull of a cellular binomial ideal is
binomial, a key result from~\cite{ES}. We remark that the hull of a
general binomial ideal need not be binomial~\cite{MO}.

Our final main result, Theorem~\ref{thm:unmixedDecomposition},
is a proof of a conjecture of Eisenbud and
Sturmfels giving a decomposition of
a cellular binomial ideal as a finite intersection of unmixed cellular
binomial ideals. Corollary~8.2 in~\cite{ES} gives this result over a
field of characteristic zero;
Theorem~\ref{thm:unmixedDecomposition} 
requires no hypotheses on the base field, and holds even over
finite fields. Passing to an algebraic closure (actually, only a
finite extension is necessary) this result combines with our
computations of minimal primary components to give a clean, explicit, but
in general redundant, primary decomposition of any cellular binomial ideal.

\subsection*{Outline}
In Section~\ref{sec:preliminaries}, we review some known results about
binomial ideals, introduce our main tools, and precisely state our
main results. Section~\ref{sec:witnesses} is the technical core of this
paper, and contains results that are used to compute the minimal
primary components and hull of a cellular binomial ideal in
Section~\ref{sec:hull}, and the unmixed decomposition in Section~\ref{sec:unmixed}.
Section~\ref{sec:implications} explores the potential
consequences of the developments in this
article.

\subsection*{Acknowledgements}
We thank Thomas Kahle, Ezra Miller and Christopher O'Neill
for fruitful discussions; their helpful comments
improved a previous version of this article. We are also very grateful to the
anonymous referee for their thoughtful and insightful suggestions,
especially for Corollary~\ref{coro:associatedPrimes}
and for
the simplifications of the proofs of Corollary~\ref{coro:zeroMemb},
Proposition~\ref{propo:monomialsInI+Memb(I)} and
Theorem~\ref{thm:cellularPrimDecV1}.

\section{Preliminaries}
\label{sec:preliminaries}

Let $\kk$ be a field; throughout this article, $\bar{\kk}$ denotes
an algebraic closure of $\kk$.
A \emph{binomial} in $\kk[x]=\kk[x_1,\dots,x_n]$ is a polynomial that
has at most two terms. A \emph{binomial ideal} is an ideal generated
by binomials.

We use the convention that $0 \in \NN$.
If $\Delta \subseteq \{1,\dots,n\}$, we denote $\NN^\Delta = \{u \in
\NN^n \mid u_i = 0 \text{ for } i \notin \Delta\}$, and define
$\ZZ^\Delta$ analogously. We also use $\bDelta$ to denote the
complement $\{1,\dots,n\} \minus \Delta$. We utilize the usual
notation for monoid algebras: for instance, $\kk[\NN^\Delta] = \kk[x_i \mid i
\in \Delta]$.

A \emph{partial character on $\ZZ^n$} is a group homomorphism
$\rho:L_{\rho} \to \kk^*$, where $L_\rho$ is a subgroup of $\ZZ^n$ and
$\kk^*$ is the multiplicative group of the field $\kk$. We usually
specify partial characters on $\ZZ^n$ by giving the pair
$(L_\rho,\rho)$, unless the lattice $L_\rho$ is understood in context.

Given a partial character $(L_\rho,\rho)$ on $\ZZ^n$, we define the
corresponding \emph{lattice ideal} via
\[
I(\rho) \defeq \< x^u - \rho(u-v) x^v \mid  u-v \in L_\rho\> \subseteq \kk[x].
\]
Our notation here is not the same as in~\cite{ES}. In that article,
$I(\rho)$ indicates a lattice ideal in a Laurent polynomial ring,
while $I_+(\rho)$ is used for lattice ideals in $\kk[x]$. Since we do
not use Laurent polynomials in this work, we drop the subscript to
simplify the notation.

If the field $\kk$ is algebraically closed, by \cite[Theorem~2.1.c]{ES}
the lattice ideal arising
from a partial character $(L_\rho,\rho)$ on 
$\ZZ^n$ is prime if and only if the lattice $L_\rho \subseteq \ZZ^n$ is
\emph{saturated}, meaning that its \emph{saturation} 
\[
\Sat(L_\rho) \defeq (\QQ \otimes_{\ZZ} L_\rho) \cap \ZZ^n,
\]
is equal to $L_\rho$. If $L_\rho$ is
a saturated lattice, then $(L_\rho,\rho)$ is 
a \emph{saturated partial character} on $\ZZ^n$. If $(L_\rho,\rho)$ is
a partial character on $\ZZ^n$, any partial character $(\Sat(L_\rho),
\chi)$ such that the restriction of $\chi$ to $L_\rho$ is $\rho$, is
called a \emph{saturation} of $(L_\rho,\rho)$.

While lattice ideals arising from saturated partial
characters are prime, more is true;~\cite[Corollary~2.6]{ES} shows that 
if $\kk$ is algebraically closed, a
binomial ideal $P \subseteq \kk[x]$ is prime if and only if there exists a subset
$\Delta \subset \{1,\dots,n\}$ and a saturated partial character
$(L_\chi,\chi)$ on $\ZZ^\Delta$, such that
\[
P = \kk[x] \cdot I(\chi) + \<x_i \mid i \in \bDelta\>,
\]
where as before, $\bDelta = \{1,\dots,n\}\minus \Delta$.
Note that in this case, $I(\chi)$ is an ideal in $\kk[\NN^\Delta]$.

We now describe the primary decomposition of a lattice ideal.
Let $(L_\rho,\rho)$ be a partial character
on $\ZZ^n$, and let $p$ be a prime number.
We define $\Sat_{p}(L_{\rho})$ and $\Sat'_{p}(L_{\rho})$ to be the largest
sublattices of $\Sat(L_{\rho})$ containing $L_{\rho}$ such that
$|\Sat_p(L_{\rho})/L_{\rho} | = p^k$ for some $k \in \ZZ$, and
$| \Sat'_p(L_{\rho})/L_{\rho} | = g$ where $(p, g) = 1$. We also adopt the
convention that $\Sat_0(L_\rho) \defeq L_\rho$ and $\Sat'_0(L_\rho)
\defeq \Sat(L_\rho)$. 

\begin{theorem}[{\cite[Corollary~2.5]{ES}}]
\label{thm:latticePrimaryDecomposition}
Let $\kk$ be an algebraically closed field of characteristic $p \geq
0$, and let $(L_\rho,\rho)$ be a partial character on $\ZZ^n$.
There are $g = | \Sat'_p(L_\rho)/L_\rho|$ distinct partial characters
$\rho_1,\dots,\rho_g$ that extend $(L_{\rho},\rho)$ to $\Sat'_{p}(L_{\rho})$. For each
$i=1,\dots,g$, there exists a unique partial character $\chi_{i}$
that extends $\rho_{i}$ to  $\Sat(L_{\rho})$. (If $p=0$, $\chi_i =
\rho_i$ for all $i=1,\dots,g$.)
The associated primes of the lattice ideal $I(\rho)$ are
$I(\chi_1),\dots,I(\chi_g)$, they are all minimal, and have the same
codimension $\rank(L_\rho)$. For 
each $i=1,\dots,g$, the ideal $I(\rho_i)$ is $I(\chi_i)$-primary, and 
\[
I(\rho) = \bigcap_{i=1}^g I(\rho_i)
\]
is the minimal primary decomposition of $I(\rho)$. 
\qed
\end{theorem}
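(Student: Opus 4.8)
The plan is to reduce everything to the structure of a finite abelian group algebra, where the split between $p$-primary and prime-to-$p$ behavior is transparent, and then to transport the resulting decomposition from the Laurent polynomial ring back to $\kk[x]$. I begin with the combinatorics of the extensions. Set $A = \ZZ^n/L_\rho$; its torsion subgroup is $T = \Sat(L_\rho)/L_\rho$, and the structure theorem gives $T \cong T_p \times T_{p'}$, where $T_p = \Sat_p(L_\rho)/L_\rho$ is the $p$-primary part and $T_{p'} = \Sat'_p(L_\rho)/L_\rho$ the prime-to-$p$ part (with $T_p = 0$ when $p = 0$). Since $\kk^*$ is divisible, $\rho$ extends to any intermediate lattice, and the extensions to a given $L'$ with $L_\rho \subseteq L' \subseteq \Sat(L_\rho)$ form a torsor under $\mathrm{Hom}(L'/L_\rho,\kk^*)$. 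As $\kk$ is algebraically closed it carries a full set of $m$-th roots of unity for every $m$ coprime to $p$, so $|\mathrm{Hom}(T_{p'},\kk^*)| = g$ and there are exactly $g$ extensions $\rho_1,\dots,\rho_g$ of $\rho$ to $\Sat'_p(L_\rho)$; on the other hand $\mathrm{Hom}(T_p,\kk^*) = 0$ in characteristic $p$ because the only $p$-power root of unity is $1$, so each $\rho_i$ admits a unique extension $\chi_i$ to $\Sat(L_\rho)$. This proves the first two sentences.

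The core of the proof happens in the Laurent polynomial ring $\kk[x^{\pm 1}]$, where the extended lattice ideal is $\langle x^m - \rho(m) \mid m \in L_\rho\rangle$. Using a character extension of $\rho$ to all of $\ZZ^n$ (again available by divisibility) to untwist the variables, I would identify the quotient with the group algebra $\kk[A]$. Splitting off the free part gives $\kk[A] \cong \kk[\ZZ^{n-r}] \otimes_\kk \kk[T_p] \otimes_\kk \kk[T_{p'}]$ with $r = \rank(L_\rho)$. Since $|T_{p'}|$ is coprime to $p$, Maschke's theorem makes $\kk[T_{p'}]$ semisimple, hence (as $\kk$ is algebraically closed) a product of $g$ copies of $\kk$ whose factors correspond to the characters of $T_{p'}$, that is, to $\rho_1,\dots,\rho_g$; meanwhile $\kk[T_p]$ is local Artinian with residue field $\kk$ and nilpotent maximal ideal. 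Thus $\kk[T]$ is a product of $g$ local Artinian rings, and reading off this product decomposition shows that in $\kk[x^{\pm 1}]$ the lattice ideal is the intersection of $g$ ideals, the $i$-th being primary with radical the prime attached to $\chi_i$. Each $I(\chi_i)$ is prime because $\Sat(L_\rho)$ is saturated, and all of them have codimension $\rank(L_\rho)$ since each corresponding quotient retains the same free part $\kk[\ZZ^{n-r}]$.

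Finally I would descend to $\kk[x]$. A lattice ideal equals the contraction of its Laurent extension, so $I(\rho) = \bigcap_{i=1}^g I(\rho_i)$ with each $I(\rho_i)$ being $I(\chi_i)$-primary and the $I(\chi_i)$ distinct minimal primes; the decomposition is minimal because the product decomposition of $\kk[T]$ has no embedded components. The step I expect to demand the most care is the positive-characteristic analysis of $\kk[T_p]$: checking that it is local with nilpotent maximal ideal, so that $I(\rho_i)$ is genuinely $I(\chi_i)$-primary rather than prime, is exactly the point where the roles of $\Sat_p$ and $\Sat'_p$ and the failure of reducedness must be tracked. The untwisting isomorphism and the clean behavior of contraction from the Laurent ring both lean on $\kk$ being algebraically closed and on lattice ideals having no monomial zerodivisors.
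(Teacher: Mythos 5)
Your proposal is correct, and it is essentially the standard argument: the paper itself gives no proof of this statement, citing it as \cite[Corollary~2.5]{ES}, and the route you take --- untwisting by a full character to identify $\kk[x^{\pm1}]/I(\rho)\kk[x^{\pm1}]$ with the group algebra $\kk[\ZZ^n/L_\rho]$, splitting off $\kk[T_{p'}]$ as a product of $g$ copies of $\kk$ by Maschke, observing that $\kk[T_p]$ is local with nilpotent augmentation ideal, and contracting back to $\kk[x]$ using that lattice ideals are saturated with respect to $x_1\cdots x_n$ --- is the one Eisenbud and Sturmfels use. The only points worth writing out in full are the ones you already flag: that every zerodivisor of $\kk[\ZZ^{n-r}]\otimes_\kk\kk[T_p]$ is nilpotent (so the components are genuinely primary), and that distinct characters $\chi_i$ yield distinct primes $I(\chi_i)$ (so the decomposition is irredundant).
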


The above result implies that, over an
algebraically closed field of characteristic zero, a primary lattice
ideal is prime. This is not the case if $\chr(\kk)>0$; for instance
$\<x_1^2-x_2^2\> \subseteq \kk[x_1,x_2]$ is primary if $\chr(\kk)=2$,
as in this case $x_1^2-x_2^2=(x_1-x_2)^2$. However, this ideal is not
radical, and therefore not prime.

We now introduce the main objects of study in this article.

\begin{definition}
\label{def:cellular} 
An ideal $I\subseteq \kk[x]$ is \emph{cellular} if every
variable $x_i$ is either a nonzerodivisor modulo $I$ or is nilpotent
modulo $I$. The nonzerodivisor variables modulo a cellular binomial
ideal $I$ are called the
\emph{cellular variables of $I$}. If $I$ is a cellular binomial ideal,
and $\Delta \subseteq \{1,\dots,n\}$ indexes the cellular variables of
$I$, then $I$ is called \emph{$\Delta$-cellular}.
\endrk
\end{definition}

If $(L_\rho,\rho)$ is a partial character on $\ZZ^n$, then the lattice
ideal $I(\rho)$ is $\{1,\dots,n\}$-cellular. The following (well
known) result gives a kind of converse to this assertion.

\begin{lemma}
\label{lemma:latticeIdealOfCellular}
Let $I$ be a $\Delta$-cellular binomial ideal in $\kk[x]$. There
exists a partial character $(L_\rho,\rho)$ on $\ZZ^{\Delta}$ such that 
\[
I \cap \kk[\NN^{\Delta}] = I(\rho). 
\]
\end{lemma}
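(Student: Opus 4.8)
The plan is to set $J \defeq I \cap \kk[\NN^\Delta]$ and show directly that $J$ is a lattice ideal in $\kk[\NN^\Delta]$, i.e.\ that it equals $I(\rho)$ for a partial character $\rho$ on $\ZZ^\Delta$ reconstructed from the binomials of $J$. Two structural facts get the argument off the ground. First, $J$ is a binomial ideal: contracting a binomial ideal to a coordinate subring is an elimination, and elimination ideals of binomial ideals are again binomial by~\cite{ES}. Second, every cellular variable $x_i$ with $i \in \Delta$ remains a nonzerodivisor modulo $J$; this transfers immediately from $\kk[x]$, since if $f \in \kk[\NN^\Delta]$ and $x_i f \in J$, then $x_i f \in I$, so $f \in I$ because $x_i$ is a nonzerodivisor modulo the $\Delta$-cellular ideal $I$, and hence $f \in I \cap \kk[\NN^\Delta] = J$.

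Next I would record the consequence that (for $I$, hence $J$, proper) $J$ contains no monomial: if $x^w \in J$ with $w \neq 0$, then choosing $i \in \Delta$ with $w_i > 0$ and repeatedly cancelling nonzerodivisor variables forces $1 \in J$, contradicting properness. Since $J$ is binomial and monomial-free, I can take a generating set of honest binomials $a x^u - b x^v$ with $a,b \in \kk^*$; cancelling the common-factor monomial (legitimate because the variables are nonzerodivisors modulo $J$) and rescaling, I may assume each generator is in the normalized form $x^u - \lambda x^v$ with $\supp(u) \cap \supp(v) = \emptyset$ and $\lambda \in \kk^*$.

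With these normalized generators in hand, I would define $L_\rho \subseteq \ZZ^\Delta$ to be the subgroup generated by the exponent differences $u - v$, and set $\rho(u-v) \defeq \lambda$. The crux of the proof is checking that this prescription actually defines a partial character, that is, a well-defined group homomorphism $L_\rho \to \kk^*$. Well-definedness and multiplicativity are precisely where the nonzerodivisor hypothesis pays off: if two binomials of $J$ share an exponent difference, subtracting them yields a monomial multiple that must vanish, forcing the scalars to agree; and combining two binomials of $J$ produces the binomial attached to the sum of their exponent differences, up to a monomial correction term that again lies in $J$ and is therefore zero. This monomial-freeness is the single point on which everything hinges, and it is the step I expect to require the most care.

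Finally, I would verify $J = I(\rho)$. The inclusion $I(\rho) \subseteq J$ holds because each generator $x^u - \rho(u-v)x^v$ of $I(\rho)$ is obtained from the normalized generators of $J$ by the same combine-and-cancel operations, all of which stay inside $J$; the reverse inclusion $J \subseteq I(\rho)$ is immediate, since the normalized binomial generators of $J$ are by construction among the generators of $I(\rho)$. As an alternative to this explicit reconstruction, one may instead invoke the characterization in~\cite{ES} that a binomial ideal all of whose variables are nonzerodivisors is exactly a lattice ideal; the first two steps above verify its hypotheses for $J$ and produce the partial character $(L_\rho,\rho)$ on $\ZZ^\Delta$ at once.
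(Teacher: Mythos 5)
Your proposal is correct, but your main line of argument is genuinely different from the paper's. The paper's proof is a two-line reduction: since $J = I \cap \kk[\NN^{\Delta}]$ is binomial (elimination) and monomial-free, \cite[Corollary~2.5]{ES} gives a partial character $(L_\rho,\rho)$ with $\big(J : (\prod_{i\in\Delta}x_i)^\infty\big) = I(\rho)$, and $\Delta$-cellularity makes that saturation equal to $J$ itself. You instead reconstruct $(L_\rho,\rho)$ by hand from normalized binomial generators of $J$, which amounts to re-proving the relevant portion of \cite[Theorem~2.1/Corollary~2.5]{ES} in this special case; your key steps (the $\Delta$-variables remain nonzerodivisors modulo $J$, $J$ is monomial-free, monomial-freeness forces the scalars attached to a given exponent difference to agree, and nonzerodivisor cancellation lets you pass to coprime exponents and to arbitrary elements of $L_\rho$) are all sound, and the well-definedness check you flag as the crux is indeed the standard argument. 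What your route buys is self-containedness and an explicit description of $L_\rho$ and $\rho$; what the paper's route buys is brevity, at the cost of leaning on Eisenbud--Sturmfels as a black box. The alternative you mention in your final sentence --- that a proper binomial ideal modulo which all variables are nonzerodivisors is exactly a lattice ideal --- is essentially the paper's proof, so you have in effect given both arguments.
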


For the proof of Lemma~\ref{lemma:latticeIdealOfCellular}, we need to
understand the elimination ideals of a binomial ideal.

\begin{lemma}[{\cite[Corollary~1.3]{ES}}]
\label{lemma:elimination}
Let $I\subseteq \kk[x]$ be a binomial ideal, and let $\Delta \subseteq
\{1,\dots,n\}$. Then the elimination ideal $I \cap \kk[\NN^\Delta]$ is binomial.
\qed
\end{lemma}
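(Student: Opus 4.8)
The plan is to prove the lemma by a Gröbner-basis computation with an elimination term order, exploiting the fact that Buchberger's algorithm carries binomials to binomials. The entire argument is valid over an arbitrary field $\kk$, since Gröbner basis theory requires no assumption on the base field.

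First I would record the key closure property: a binomial ideal has a binomial Gröbner basis with respect to \emph{any} term order. Starting from binomial generators, I would verify that every step of Buchberger's algorithm stays within the class of binomials. For two binomials $f = x^a - c\,x^b$ and $g = x^{a'} - c'\,x^{b'}$, with $x^a$ and $x^{a'}$ their respective leading terms and $x^m = \lcm(x^a, x^{a'})$, the leading terms cancel in the $S$-polynomial, leaving
\[
x^{m-a}\, f - x^{m-a'}\, g = -c\,x^{m-a+b} + c'\,x^{m-a'+b'},
\]
which has at most two terms. Likewise, reducing a binomial $\alpha\,x^p + \beta\,x^q$ modulo $g$ replaces its leading monomial by a scalar multiple of a single monomial, again producing at most two terms. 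Since every $S$-polynomial and every intermediate reduction remains a binomial, the algorithm terminates with a Gröbner basis $G$ consisting of binomials.

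Next I would fix a term order that eliminates the variables indexed by $\bDelta$, that is, one in which any monomial divisible by some $x_i$ with $i \in \bDelta$ exceeds every monomial in $\kk[\NN^\Delta]$; a block order with the $\bDelta$-variables forming the larger block will do. Computing a binomial Gröbner basis $G$ of $I$ for this order as above, the Elimination Theorem gives $I \cap \kk[\NN^\Delta] = \langle G \cap \kk[\NN^\Delta]\rangle$. Every element of $G$ is a binomial, so every element of $G \cap \kk[\NN^\Delta]$ is a binomial in the variables $\{x_i \mid i \in \Delta\}$; hence $I \cap \kk[\NN^\Delta]$ is generated by binomials, and is therefore a binomial ideal.

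The only point requiring genuine care is the closure property in the second paragraph: I must confirm that no step of the algorithm can manufacture a third monomial. This is exactly the cancellation of leading terms in each $S$-polynomial and the single-monomial replacement in each reduction, so the verification is a short direct computation rather than a real obstacle. The remainder of the argument is a routine invocation of the Elimination Theorem applied to the binomial Gröbner basis.
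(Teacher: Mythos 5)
Your proof is correct and takes essentially the same route as the paper's source: the paper states this lemma without proof, citing~\cite[Corollary~1.3]{ES}, and the argument there is exactly yours --- Buchberger's algorithm preserves binomials, so a binomial ideal has a binomial Gr\"obner basis for any term order (in particular an elimination order), and the Elimination Theorem then yields binomial generators of $I \cap \kk[\NN^\Delta]$. The field-independence you note is also the point of the citation, so nothing needs to be added.
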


\begin{proof}[Proof of Lemma~\ref{lemma:latticeIdealOfCellular}]
The ideal $I \cap \kk[\NN^{\Delta}] \subseteq
\kk[\NN^{\Delta}]$, which is a binomial ideal
by Lemma~\ref{lemma:elimination}, contains no monomials. Therefore, 
by~\cite[Corollary~2.5]{ES}, there exists a partial character
$(L_\rho,\rho)$ on $\ZZ^{\Delta}$ such that $((I \cap
\kk[\NN^{\Delta}]) : (\prod_{i \in \Delta} x_i)^{\infty} ) =
I(\rho)$. But since $I$ is $\Delta$-cellular, $((I \cap
\kk[\NN^{\Delta}]) : (\prod_{i \in \Delta} x_i)^{\infty} ) = I \cap
\kk[\NN^{\Delta}]$, and the result follows.
\end{proof}

The following result is crucially useful throughout.

\begin{lemma}[{\cite[Corollary~1.7.a]{ES}}]
\label{lemma:idealQuotient}
Let $I$ be a binomial ideal in $\kk[x]$ and let $m\in \kk[x]$ be a
monomial. Then the ideal quotient $(I:m)$ and the saturation $(I:m^\infty)$ are binomial ideals.
\qed
\end{lemma}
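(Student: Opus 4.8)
The plan is to reduce both assertions to colons and saturations by a single variable, and then to realize the single-variable quotient as an elimination ideal, so that Lemma~\ref{lemma:elimination} applies. I would first record the elementary reductions. Writing $m = x_1^{a_1}\cdots x_n^{a_n}$ and using the identity $(J:fg)=((J:f):g)$ repeatedly, the quotient $(I:m)$ is obtained from $I$ by finitely many successive colons by single variables; since each intermediate ideal is again binomial once the single-variable case is known, an induction reduces matters to showing that $(I:x_i)$ is binomial. For the saturation, the ascending chain $(I:m)\subseteq(I:m^2)\subseteq\cdots$ stabilizes by Noetherianity, so $(I:m^\infty)=(I:m^N)$ for $N\gg 0$, and binomiality then follows from the quotient case applied to the monomial $m^N$. (Alternatively, for a single variable $(I:x_i^\infty)=(I+\langle tx_i-1\rangle)\cap\kk[x]$ by the Rabinowitsch trick, which is binomial directly by Lemma~\ref{lemma:elimination} applied in $\kk[x,t]$.)

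The heart of the matter is therefore the claim that $(I:x_i)$ is binomial, and here I would pass through the intersection $I\cap\langle x_i\rangle$. Introducing a new variable $t$, the standard elimination formula for an ideal intersection gives
\[
I\cap\langle x_i\rangle = \bigl(tI + (1-t)\langle x_i\rangle\bigr)\cap\kk[x].
\]
The ideal $tI+(1-t)\langle x_i\rangle\subseteq\kk[x,t]$ is generated by the binomials $tb$, as $b$ ranges over a binomial generating set of $I$, together with the binomial $x_i-tx_i$; hence it is a binomial ideal, and Lemma~\ref{lemma:elimination} shows that $I\cap\langle x_i\rangle$ is binomial. It then remains to divide by $x_i$. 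I would first observe that every binomial lying in $\langle x_i\rangle$ is already divisible by $x_i$: reducing a binomial $x^a-c\,x^b\in\langle x_i\rangle$ modulo $x_i$ (that is, setting $x_i=0$) forces $x_i\mid x^a$ and $x_i\mid x^b$ when $c\neq 0$, and $x_i\mid x^a$ when $c=0$. Consequently, choosing a binomial generating set $h_1,\dots,h_r$ of $I\cap\langle x_i\rangle$, each $h_j=x_i h_j'$ with $h_j'$ again a binomial or monomial. Since $\kk[x]$ is a domain, $g\in(I:x_i)$ if and only if $x_ig\in I\cap\langle x_i\rangle=\langle x_i h_1',\dots,x_i h_r'\rangle$, which holds if and only if $g\in\langle h_1',\dots,h_r'\rangle$; thus $(I:x_i)=\langle h_1',\dots,h_r'\rangle$ is binomial.

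The step requiring the most care is this last one: the passage from $I\cap\langle x_i\rangle$ to $(I:x_i)$ depends on having generators that are individually divisible by $x_i$, which is precisely why I first establish that binomials in $\langle x_i\rangle$ are divisible by $x_i$ rather than working with an arbitrary generating set of the intersection. The remaining ingredients — iterating over the variables for a general monomial $m$, and descending from quotients to the saturation by stabilization of the ascending chain — are routine. I note that there is a more conceptual route to $(I:x_i)$: modulo a binomial ideal the normal form of any monomial is a scalar multiple of a monomial, so multiplication by $x_i$ acts on the standard-monomial basis of $\kk[x]/I$ as a matrix with at most one nonzero entry in each column, and the kernel of such a map is visibly spanned by monomials and binomials. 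This avoids the auxiliary variable $t$ but relies on the theory of binomial Gr\"obner bases, whereas the elimination argument above uses only Lemma~\ref{lemma:elimination}.
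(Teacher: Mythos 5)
Your proof is correct. Note, however, that the paper does not prove this lemma at all: it is quoted verbatim from \cite[Corollary~1.7.a]{ES} and stamped with a \emph{qed}, so there is nothing internal to compare against; the relevant comparison is with Eisenbud--Sturmfels' own argument. Their proof runs through the Gr\"obner-basis theory of binomial ideals (reduced Gr\"obner bases of binomial ideals consist of binomials, and the normal form of a term is a term), which is essentially the ``more conceptual route'' you sketch in your closing remark. Your main argument takes a genuinely different path: you reduce $(I:m)$ to iterated single-variable colons via $(J:fg)=((J:f):g)$, handle the saturation by Noetherian stabilization, and then obtain $(I:x_i)$ from the elimination ideal $\bigl(tI+(1-t)\langle x_i\rangle\bigr)\cap\kk[x]=I\cap\langle x_i\rangle$, using the observation that a binomial lying in $\langle x_i\rangle$ is divisible by $x_i$ and cancelling $x_i$ in the integral domain $\kk[x]$. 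This is complete and correct, and it has the virtue of resting only on Lemma~\ref{lemma:elimination}, which the paper already quotes, rather than on further machinery from \cite{ES}; the trade-off is the detour through the auxiliary variable $t$ and the intersection formula, whereas the Gr\"obner-theoretic proof yields the generators of $(I:m)$ and $(I:m^\infty)$ directly and is what makes these operations effectively computable.
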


Following~\cite{KM2}, we introduce combinatorial objects attached to
a cellular binomial ideal.
Let $I$ be a $\Delta$-cellular binomial ideal in $\kk[x]$. 
A lattice $L \subseteq \ZZ^{\Delta}$ is \emph{associated}
to $I$ if there exists a \emph{witness monomial} $m \in
\kk[\NN^{\bDelta}]$ such that $(I : m ) \cap \kk[\NN^{\Delta}] =
I(\rho)$ for some partial character $\rho : L \rightarrow \kk^{*}$ on $\ZZ^{\Delta}$.
What follows is a characterization of the associated lattices to
a cellular binomial ideal.

\begin{theorem}
\label{thm:cellularWitnesses}
Let $I$ be a $\Delta$-cellular binomial ideal in
$\kk[x]$, where $\kk$ is algebraically closed. 
Assume that
$(L_\chi,\chi)$ is a saturated partial character on $\ZZ^\Delta$ such
that $\kk[x] \cdot I(\chi) + \< x_i \mid i \in \bDelta\>$ is an associated
prime of $I$. Then
there exist a monomial $m \in \kk[\NN^{\bDelta}]$ and a partial character
$(L_\tau,\tau)$ on $\ZZ^{\Delta}$ such that 
$(L_\chi,\chi)$ is a saturation of $(L_\tau,\tau)$, and  
\begin{equation}
\label{eqn:associatedLattice}
 (I : m) \cap \kk[\NN^{\Delta}] = I(\tau) .
\end{equation}
The converse of this statement also holds. If $m \in
\kk[\NN^\bDelta]$, and a partial character $(L_\tau,\tau)$ on
$\ZZ^\Delta$ is defined by~\eqref{eqn:associatedLattice}, then for any
associated prime $P$ of $I(\tau)$, $\kk[x] \cdot P + \<x_i \mid i \in
\bDelta\>$ is an associated prime of $I$. 
\end{theorem}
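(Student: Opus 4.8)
The plan is to prove the two implications separately, relying throughout on two structural features of a $\Delta$-cellular binomial ideal. First, for any monomial $m\in\kk[\NN^{\bDelta}]$ the colon $(I:m)$ is again $\Delta$-cellular: the $x_i$ with $i\in\Delta$ remain nonzerodivisors (if $x_i\,hm\in I$ then $hm\in I$), and the $x_j$ with $j\in\bDelta$ remain nilpotent since $I\subseteq(I:m)$. Second, such an ideal contains no ``mixed'' binomial $x^u-c\,x^v$ with $c\neq0$, $u\in\NN^\Delta$ and $v\notin\NN^\Delta$: such a relation would make the nonzerodivisor $x^u$ equal modulo $I$ to the nilpotent $x^v$, forcing $I=\kk[x]$. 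Hence every binomial generator either is supported on $\Delta$, and so lies in the contraction to $\kk[\NN^\Delta]$, or has both terms divisible by some $x_j$ with $j\in\bDelta$.

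For the converse, set $J\defeq(I:m)$, so that $J$ is $\Delta$-cellular with $J\cap\kk[\NN^\Delta]=I(\tau)$ by Lemma~\ref{lemma:latticeIdealOfCellular}. Multiplication by $m$ gives an injection $\kk[x]/J\hookrightarrow\kk[x]/I$, whence $\Ass(\kk[x]/J)\subseteq\Ass(\kk[x]/I)$; it therefore suffices to exhibit $Q\defeq\kk[x]\cdot P+\<x_i\mid i\in\bDelta\>$ as an associated prime of $J$, and I will show it is even a minimal prime. Every minimal prime of $J$ is binomial, avoids the nonzerodivisors $x_i$ ($i\in\Delta$), and contains the nilpotents $x_j$ ($j\in\bDelta$), so by \cite[Corollary~2.6]{ES} it has the form $\kk[x]\cdot I(\psi)+\<x_i\mid i\in\bDelta\>$ with $(L_\psi,\psi)$ saturated on $\ZZ^\Delta$. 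Using the absence of mixed binomials one checks that $J\subseteq\kk[x]\cdot I(\psi)+\<x_i\mid i\in\bDelta\>$ if and only if $I(\tau)\subseteq I(\psi)$, so contraction to $\kk[\NN^\Delta]$ is an inclusion-preserving bijection between the minimal primes of $J$ and those of $I(\tau)$. By Theorem~\ref{thm:latticePrimaryDecomposition} the associated primes of the lattice ideal $I(\tau)$ coincide with its minimal primes; hence each associated prime $P=I(\chi_i)$ of $I(\tau)$ lifts to a minimal, and thus associated, prime $Q$ of $J$, which proves the claim.

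For the forward direction, since $P=\kk[x]\cdot I(\chi)+\<x_i\mid i\in\bDelta\>$ is associated to $I$ there is a witness $f$ with $(I:f)=P$. By the bijection established above, producing the required pair — with $(I:m)\cap\kk[\NN^\Delta]=I(\tau)$, $\Sat(L_\tau)=L_\chi$ and $\chi|_{L_\tau}=\tau$ — is exactly the statement that $P$ is a minimal prime of $(I:m)$. When $P$ is a minimal prime of $I$ one may take $m=1$, since then the minimal primes of $I$ lift those of $I\cap\kk[\NN^\Delta]$ and, by Theorem~\ref{thm:latticePrimaryDecomposition}, $\chi$ is one of the saturations of the contracted character. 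The substantive case is an embedded $P$, where I would construct $m$ from $f$ — taking it to be the $\bDelta$-component of a monomial occurring in a representative of $\bar f\in\kk[x]/I$, or an appropriate meet of such components — and then verify that passing to $(I:m)$ simultaneously enlarges the contracted lattice so that $\Sat(L_\tau)=L_\chi$ and fixes the character so that $\chi$ extends $\tau$. Controlling the character datum, and not merely the lattice, as one passes from the polynomial witness $f$ to the monomial $m$ is the crux of the argument, and is precisely the combinatorial input developed in Section~\ref{sec:witnesses} (in the spirit of~\cite{KM2}). Once such an $m$ is produced, Theorem~\ref{thm:latticePrimaryDecomposition} shows, exactly as in the converse, that $P\in\operatorname{Min}(I:m)$ and that the resulting $(m,\tau)$ has the asserted properties.
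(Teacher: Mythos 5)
Your proof of the converse is correct and essentially self-contained: the injection $\kk[x]/(I:m)\hookrightarrow \kk[x]/I$ given by multiplication by $m$ yields $\Ass(\kk[x]/(I:m))\subseteq\Ass(\kk[x]/I)$, and the identification of the minimal primes of the $\Delta$-cellular ideal $(I:m)$ with the lifts $\kk[x]\cdot P+\<x_i\mid i\in\bDelta\>$ of the minimal primes $P$ of $(I:m)\cap\kk[\NN^\Delta]=I(\tau)$ is exactly Lemma~\ref{lemma:minimalPrimesOfCellular} applied to $(I:m)$; since all associated primes of a lattice ideal are minimal (Theorem~\ref{thm:latticePrimaryDecomposition}), the claim follows. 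This is a genuinely different and more explicit route than the paper, which simply cites \cite[Theorem~15.11]{KM} for the converse.

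The forward direction, however, has a genuine gap. The paper obtains it by citing \cite[Theorem~8.1]{ES}, and the entire content of that theorem is precisely the step you defer: starting from a polynomial witness $f$ with $(I:f)=P$, one must produce a \emph{monomial} $m\in\kk[\NN^\bDelta]$ such that the lattice $L_\tau$ of $(I:m)\cap\kk[\NN^\Delta]$ satisfies $\Sat(L_\tau)=L_\chi$ \emph{and} the character $\tau$ is the restriction of $\chi$ to $L_\tau$. Your proposal says you would take $m$ to be the $\bDelta$-component of a monomial occurring in $f$, ``or an appropriate meet of such components,'' and then ``verify'' the two properties, while conceding that controlling the character datum ``is the crux of the argument.'' That crux is not carried out, and it is not routine: a naive choice of term of $f$ need not yield the right lattice, and the argument in \cite{ES} requires a careful selection of the witness (via a term order and an induction) together with a separate verification that $\chi$ extends $\tau$ and not merely that $\Sat(L_\tau)=L_\chi$. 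Moreover, your pointer to Section~\ref{sec:witnesses} is circular: that section \emph{uses} Theorem~\ref{thm:cellularWitnesses} (for instance in Corollary~\ref{coro:associatedPrimes}) rather than supplying its proof. As written, the forward implication is asserted rather than proved; only the easy subcase where $P$ is a minimal prime and $m=1$ is actually established.
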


\begin{proof}
The first half of this statement
is~\cite[Theorem~8.1]{ES}. Theorem~15.11 in~\cite{KM} generalizes this
result, and also provides a converse.
\end{proof}

The following key object attached to a cellular binomial ideal is used
throughout this work.

\begin{definition} 
\label{def:embedded+Memb}
Let $I$ be a $\Delta$-cellular
binomial ideal in $\kk[x]$, and let $(L_\rho,\rho)$ be the partial character on
$\ZZ^\Delta$ that satisfies $I\cap \kk[\NN^{\Delta}] = I(\rho)$. A
lattice $L$ associated to $I$ is said to be
\emph{embedded} if $\rank(L)>\rank(L_{\rho})$, equivalently, if
$L$ properly contains $L_{\rho}$ and
$\Sat(L_{\rho}) \neq \Sat(L)$.
We define 
\[
\Memb(I) \defeq \< \text{witness monomials of embedded associated
  lattices to } I \>.
\]
If $I$ has no embedded
associated lattices, $\Memb(I)=0$.
We remark that by construction, $1 \notin \Memb(I)$, and $\Memb(I)$ is
generated by monomials in $\kk[\NN^\bDelta]$.
\endrk
\end{definition}

\begin{remark}
\label{rmk:relationshipToKM2}
The above definition is an extension of a construction of
Kahle~\cite{KM2}, that has its roots in the work of Kahle and
Miller~\cite{KM}. We note that in~\cite{KM2}, the definition of
embedded associated lattice requires only proper containment of the
lattices involved;
however, in the case of interest in~\cite{KM2}, when $I$
is $\Delta$-cellular with a unique minimal prime (over the algebraic
closure of $\kk$) and $\chr(\kk)=0$, proper containment of those
lattices implies that their ranks are not the same.

The notion of \emph{associated mesoprime} from~\cite{KM} is an
evolved version of the concept of associated lattices that is used in
this article (see~\cite[Remark~12.8]{KM}). In the case of cellular
binomial ideals, every associated mesoprime is supported on an
associated lattice. 
The associated mesoprimes of a (cellular) binomial ideal $I$ arise
from \emph{essential $I$-witnesses} (see~\cite[Definition~12.1]{KM}), which
are witness monomials as defined above, satisfying additional
maximality conditions.
\endrk
\end{remark}

\begin{example} 
\label{example:Memb} 
  The binomial ideal $I = \langle x_1^{3}x_3-x_1^{3},
  x_1^{4}, x_1^{2}x_2x_4-x_1^{2}x_2, x_2^{2}, x_4^{3}-1 \rangle \subseteq
  \kk[x_1,x_2,x_3,x_4]$ is $\{3,4\}$-cellular. In
  this case $\Memb(I) = \langle x_1^{3} \rangle$. Observe that $x_1^{2}x_2$
  is not in $\Memb(I)$: even though $(I : x_1^{2}x_2) \cap \kk[x_3,x_4]$ contains
  the binomial $x_4 - 1$, the fact that $x_4^{3} - 1 \in I$ implies that
  the lattices corresponding to the lattice ideals obtained by
  intersecting $I$ and $(I:x_1^2x_2)$ with $\kk[x_3,x_4]$, have the same
  saturation. 
\endrk
\end{example}

We are now ready to state our main results.

\begin{theorem}
\label{thm:hullDecomposition}
Let $I$ be a $\Delta$-cellular binomial ideal in $\kk[x]$. Denote by $\hull(I)$
the intersection of the minimal primary components of $I$.
Then $\hull(I) = I +
\Memb(I)$. (In particular, $\hull(I)$ is a binomial ideal, cf.~\cite[Corollary~6.5]{ES}.) Assume now
that $\kk$ is algebraically closed. 
If $(L_\rho,\rho)$ is a partial character on 
$\ZZ^\Delta$ such that $I \cap \kk[\NN^\Delta] = I(\rho)$, and $g$ and
$\rho_1,\dots,\rho_g$ are as in
Theorem~\ref{thm:latticePrimaryDecomposition}, so that 
$I(\rho) = \cap_{\ell=1}^g I(\rho_\ell)$ is a minimal primary
decomposition, then
\[
\hull(I) = I + \Memb(I) = \bigcap_{\ell=1}^g
\bigg[ \bigg((I+I(\rho_\ell)): (\prod_{i\in \Delta} x_i)^\infty\bigg) +
\Memb(I) \bigg]
\] 
is a minimal primary decomposition of $\hull(I)$.
\end{theorem}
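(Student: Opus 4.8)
The plan is to prove the theorem in two logically separate stages: first the field-independent identity $\hull(I) = I + \Memb(I)$, and then—over an algebraically closed field—the explicit primary decomposition of the right-hand side. The central claim to establish is that $I + \Memb(I)$ is exactly the intersection of the minimal primary components of $I$, i.e.\ that adjoining the embedded witness monomials precisely ``kills'' the embedded primary components while leaving the minimal ones intact.

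I would begin with the identity $\hull(I) = I + \Memb(I)$, which I expect to be the heart of the argument. The idea is to show two things: (i) every minimal prime of $I$ contains $I + \Memb(I)$ (so the minimal primary components of $I + \Memb(I)$ agree with those of $I$), and (ii) $I + \Memb(I)$ has \emph{no} embedded associated primes, so that $I + \Memb(I)$ equals its own hull. For (i), the key is that the witness monomials generating $\Memb(I)$ come from \emph{embedded} associated lattices (those with $\rank(L) > \rank(L_\rho)$, by Definition~\ref{def:embedded+Memb}); since the minimal primes of $I$ all have codimension equal to $\rank(L_\rho) + |\bar\Delta|$ via Lemma~\ref{lemma:latticeIdealOfCellular} and Theorem~\ref{thm:latticePrimaryDecomposition}, the embedded witness monomials must lie in every such minimal prime. (These are monomials in $\kk[\NN^{\bar\Delta}]$, and each minimal prime contains all the nilpotent variables $x_i$, $i \in \bar\Delta$.) For (ii), I would use Theorem~\ref{thm:cellularWitnesses} to show that any associated prime of $I + \Memb(I)$ pulls back to an associated lattice of $I$; the construction of $\Memb(I)$ as the ideal of \emph{all} embedded witness monomials is designed so that, after adjoining them, no witness monomial can survive to produce an embedded lattice. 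This is where the technical results of Section~\ref{sec:witnesses} (forward-referenced in the outline) will do the real work, and I anticipate the main obstacle lies precisely here: controlling the associated lattices of $(I+\Memb(I))$ via localization/saturation arguments, and verifying that saturating by the embedded monomials does not accidentally create new embedded associated lattices.

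For the second stage, assume $\kk$ algebraically closed. By Lemma~\ref{lemma:latticeIdealOfCellular} there is a partial character $(L_\rho,\rho)$ on $\ZZ^\Delta$ with $I \cap \kk[\NN^\Delta] = I(\rho)$, and by Theorem~\ref{thm:latticePrimaryDecomposition} we have the minimal primary decomposition $I(\rho) = \bigcap_{\ell=1}^g I(\rho_\ell)$, where the $I(\rho_\ell)$ are $I(\chi_\ell)$-primary with distinct minimal primes. The plan is to check that each bracketed ideal
\[
Q_\ell \defeq \Big((I + I(\rho_\ell)) : (\textstyle\prod_{i \in \Delta} x_i)^\infty\Big) + \Memb(I)
\]
is primary with associated prime $\kk[x]\cdot I(\chi_\ell) + \langle x_i \mid i \in \bar\Delta\rangle$, and that $\bigcap_\ell Q_\ell = I + \Memb(I)$. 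I would verify primariness by showing $Q_\ell$ is cellular with the correct lattice data: saturating $I + I(\rho_\ell)$ by the cellular variables forces the $\Delta$-part to become $I(\rho_\ell)$ (which is $I(\chi_\ell)$-primary by Theorem~\ref{thm:latticePrimaryDecomposition}), while adjoining $\Memb(I)$ removes any embedded structure exactly as in stage one, so $Q_\ell$ has the single minimal prime $I(\chi_\ell)$ lifted to $\kk[x]$ and no embedded primes.

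Finally, for the intersection formula $\bigcap_{\ell=1}^g Q_\ell = I + \Memb(I)$, I would argue that modulo $\Memb(I)$ one may work with $I$ itself, and that saturating $I + I(\rho_\ell)$ by the cellular variables and intersecting over $\ell$ recovers $I$ up to the monomials already in $\Memb(I)$. Concretely, the distinct minimal primes $I(\chi_\ell)$ ensure the $Q_\ell$ are pairwise non-redundant (so the decomposition is minimal), and the identity $\bigcap_\ell I(\rho_\ell) = I(\rho) = I \cap \kk[\NN^\Delta]$ lifts—after adjoining $\Memb(I)$ to clear embedded components—to the full ring. The subtle point I expect to need care with is that saturation by $\prod_{i \in \Delta} x_i$ and the formation of $\Memb(I)$ must commute appropriately with intersection; I would reduce this to the lattice-level statement in Theorem~\ref{thm:latticePrimaryDecomposition} and the binomiality of ideal quotients from Lemma~\ref{lemma:idealQuotient}, confirming that each operation stays within the class of cellular binomial ideals throughout.
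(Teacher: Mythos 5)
Your overall architecture (first show $\hull(I)=I+\Memb(I)$, then identify each bracketed ideal as a minimal primary component) matches the paper's, but step (i) of your first stage contains a genuine error. You argue that $\Memb(I)$ lies in every minimal \emph{prime} of $I$ because its generators are monomials in the nilpotent variables --- which is true but useless: what you need is that $\Memb(I)$ lies in every minimal primary \emph{component} of $I$, i.e.\ in the kernel of each localization map $\kk[x]/I \to (\kk[x]/I)_{P_\ell}$. Containment in $P_\ell$ is automatic for any monomial in $\kk[\NN^{\bDelta}]$ and says nothing about the component; if it did, one could add \emph{all} such monomials to $I$ without changing the hull, which is false. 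The correct argument uses the defining property of a witness monomial $m$: by Lemma~\ref{lemma:monomialsInMemb} there is a binomial $b=x^u-\lambda x^v\in\kk[\NN^\Delta]$ with $u-v\notin\Sat(L_\rho)$ and $mb\in I$; since $u-v\notin\Sat(L_\rho)$, $b$ lies outside every minimal prime $P_\ell$, hence maps to a unit in $(\kk[x]/I)_{P_\ell}$, and $mb\in I$ then forces $m$ into the kernel. Without this, the identity $\hull(I)=I+\Memb(I)$ does not follow even granting that $I+\Memb(I)$ is unmixed.

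A second, more structural gap is in your stage two. The ideal that is \emph{naturally} primary is $J_\ell+\Memb(J_\ell)$ where $J_\ell=\big((I+I(\rho_\ell)):(\prod_{i\in\Delta}x_i)^\infty\big)$, since $J_\ell$ is $\Delta$-cellular with unique minimal prime $P_\ell$ (this is the paper's Corollary~\ref{coro:computationallyUseful}). The theorem asserts the component is $J_\ell+\Memb(I)$, and the equality $J_\ell+\Memb(J_\ell)=J_\ell+\Memb(I)$ is not something that happens ``exactly as in stage one'': the embedded associated lattices of $J_\ell$ are witnessed relative to $J_\ell$, not $I$, and showing $\Memb(J_\ell)\subseteq\Memb(I)$ is the single hardest argument in the paper (the graph-theoretic S-pair propagation in the proof of Theorem~\ref{thm:cellularPrimDecV2}). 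Your proposal does not flag this step or supply a substitute for it. By contrast, the final intersection formula, which you treat as a delicate commutation of saturation with intersection, comes for free once each $Q_\ell$ is identified as the kernel of localization at $P_\ell$ and $I+\Memb(I)$ is known to be unmixed with the same minimal components; no separate argument is needed there.
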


\begin{proof}
This is a combination of
Theorems~\ref{thm:cellularPrimDecV2} and~\ref{thm:cellularHull}.
\end{proof}

We have abused notation above and used
$I(\rho_\ell)$ to also denote the extension $\kk[x] \cdot
I(\rho_\ell)$; we continue this practice where it causes no confusion.

Recall that an ideal $I$ is \emph{unmixed} if the codimensions of all of its
associated primes are equal. By Corollary~\ref{coro:unmixedForCellular},
a cellular binomial ideal is 
unmixed if and only if all of its associated primes are
minimal. 
The following result provides an unmixed decomposition for a cellular
binomial ideal. The characteristic zero version
of~\eqref{eqn:unmixedDecomposition} was proved by Eisenbud and
Sturmfels (\cite[Corollary~8.2]{ES}), who conjectured it also holds in
positive characteristic. We prove this conjecture in
Section~\ref{sec:unmixed}, and combine it with 
Theorem~\ref{thm:hullDecomposition} to give an explicit formula for a primary
decomposition of a cellular binomial ideal.

\begin{theorem}
\label{thm:unmixedDecomposition}
Let $I$ be a $\Delta$-cellular binomial ideal in $\kk[x]$. If $m \in
\Memb(I)$ or $m=1$,
let $(L_{\rho(m)},\rho(m))$ be the partial character on $\ZZ^\Delta$
such that $(I:m) \cap \kk[\NN^\Delta] = I(\rho(m))$. Then
\begin{align}
I 
& =  \bigcap_{m \in \Memb(I)\cup\{1\}} \hull\bigg( (I+I(\rho(m))) :
(\prod_{i\in\Delta} x_i)^\infty \bigg)  
\label{eqn:unmixedDecomposition} \\
\label{eqn:unmixedDecompositionExplicit}
& = 
  \bigcap_{m \in \Memb(I)\cup\{1\}} \bigg[ \bigg( (I + I(\rho(m))) : (\prod_{i\in
  \Delta} x_i)^\infty\bigg) + \Memb\bigg( (I + I(\rho(m))) : (\prod_{i\in
  \Delta} x_i)^\infty \bigg) \bigg] 
\end{align}
Assume now that $\kk$ is algebraically closed.
If $g(m)$,
$\rho(m)_1,\dots,\rho(m)_{g(m)}$ are as in
Theorem~\ref{thm:latticePrimaryDecomposition}, so that 
$I(\rho(m)) = \cap_{\ell=1}^{g(m)} I(\rho(m)_\ell)$ is a minimal primary
decomposition, then
\[
I = \bigcap_{m \in \Memb(I) \cup\{1\}} \bigcap_{\ell=1}^{g(m)} 
\bigg[ \bigg((I+I(\rho(m)_\ell)): (\prod_{i\in \Delta} x_i)^\infty\bigg) +
\Memb\bigg((I+I(\rho(m))):(\prod_{i\in\Delta} x_i)^\infty \bigg) \bigg]
\]
is a (not necessarily minimal) primary decomposition of $I$.
\end{theorem}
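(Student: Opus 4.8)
The plan is to reduce everything to the single identity \eqref{eqn:unmixedDecomposition}, namely $I = \bigcap_{m} \hull(J_m)$ where $J_m \defeq (I + I(\rho(m))) : (\prod_{i\in\Delta} x_i)^\infty$ and $m$ ranges over $\Memb(I) \cup \{1\}$. Granting this, formula \eqref{eqn:unmixedDecompositionExplicit} is obtained by substituting $\hull(J_m) = J_m + \Memb(J_m)$ from Theorem~\ref{thm:hullDecomposition}, and the final displayed primary decomposition by substituting the minimal primary decomposition of each $\hull(J_m)$ from the same theorem. Here one must check that $(J_m + I(\rho(m)_\ell)):(\prod_{i\in\Delta}x_i)^\infty = (I + I(\rho(m)_\ell)):(\prod_{i\in\Delta}x_i)^\infty$, which follows from $I(\rho(m)) \subseteq I(\rho(m)_\ell)$ (Theorem~\ref{thm:latticePrimaryDecomposition}) together with the monotonicity and idempotence of saturation. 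Thus the whole theorem rests on \eqref{eqn:unmixedDecomposition}.

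First I would record the structural properties of $J_m$. It is binomial by Lemma~\ref{lemma:idealQuotient}, since $I + I(\rho(m))$ is binomial and saturation by a monomial preserves binomiality; the variables $x_i$ with $i \in \bDelta$ remain nilpotent modulo $J_m$ because $I \subseteq J_m$, while the $x_i$ with $i\in\Delta$ become nonzerodivisors after the saturation, so $J_m$ is $\Delta$-cellular. To identify its lattice, note that $I(\rho(m)) = (I:m)\cap\kk[\NN^\Delta] \subseteq I:m$ and $I \subseteq I:m$, whence $I + I(\rho(m)) \subseteq I:m$ and therefore $J_m \subseteq (I:m):(\prod_{i\in\Delta}x_i)^\infty$. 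Intersecting with $\kk[\NN^\Delta]$ and using the identity $(A:(\prod_{i\in\Delta}x_i)^\infty)\cap\kk[\NN^\Delta] = (A\cap\kk[\NN^\Delta]):(\prod_{i\in\Delta}x_i)^\infty$, together with the fact that lattice ideals contain no monomials, gives $J_m \cap \kk[\NN^\Delta] \subseteq I(\rho(m))$; the reverse containment is clear, so $J_m\cap\kk[\NN^\Delta] = I(\rho(m))$. In particular $J_m$ is proper, by Theorem~\ref{thm:latticePrimaryDecomposition} all its minimal primes have codimension $\rank(L_{\rho(m)}) + |\bDelta|$, so $\hull(J_m)$ is unmixed and Theorem~\ref{thm:hullDecomposition} applies to it with the characters $\rho(m)_\ell$.

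The inclusion $I \subseteq \bigcap_m \hull(J_m)$ is immediate, as $I \subseteq J_m \subseteq \hull(J_m)$ for every $m$ (the $m=1$ term is just $\hull(I)$, since $I(\rho(1)) = I\cap\kk[\NN^\Delta] \subseteq I$ and $I$ is $\Delta$-cellular, so its saturation is trivial). For the reverse inclusion I would argue prime by prime: fixing a minimal primary decomposition $I = \bigcap_P Q_P$, it suffices to produce for each associated prime $P$ an index $m$ with $\hull(J_m) \subseteq Q_P$, since then $\bigcap_m \hull(J_m) \subseteq \bigcap_P Q_P = I$. Given $P = \kk[x]\cdot I(\chi) + \<x_i \mid i \in \bDelta\>$, Theorem~\ref{thm:cellularWitnesses} supplies a witness monomial $m = m(P)$ (equal to $1$ exactly when $P$ is minimal) and a partial character $\tau = \rho(m)$ of which $\chi$ is a saturation, so $P$ is one of the minimal primes of $J_m$. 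Localizing at $P$ turns the $\Delta$-variables into units, hence $(J_m)_P = (I + I(\rho(m)))_P$, and the $P$-primary component of $J_m$ is contained in $Q_P$ as soon as $I(\rho(m)) \subseteq Q_P$; since $\hull(J_m)$ is contained in each of its minimal primary components, this yields $\hull(J_m)\subseteq Q_P$. For minimal $P$ the needed containment is automatic, because $I(\rho(1)) = I\cap\kk[\NN^\Delta] \subseteq I \subseteq Q_P$.

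The main obstacle is precisely the containment $I(\rho(m)) \subseteq Q_P$ for the embedded primes. The generating binomials $x^u - \rho(m)(u-v)x^v$ of $I(\rho(m))$ satisfy only $m\,(x^u - \rho(m)(u-v)x^v) \in I \subseteq Q_P$, and because $m \in \<x_i \mid i\in\bDelta\> \subseteq P = \sqrt{Q_P}$, the primary property of $Q_P$ does \emph{not} allow cancelling $m$; so recovering these binomials in the embedded component cannot be done formally. This is exactly where the technical results of Section~\ref{sec:witnesses}, namely the explicit description of the embedded primary components underpinning Theorem~\ref{thm:hullDecomposition}, must be invoked: one selects the minimal primary decomposition $\{Q_P\}$ of $I$ so that the component at an embedded $P$ is the minimal primary component $C_P$ of $J_{m(P)}$ at $P$, and then verifies that the resulting family $\{C_P\}$, indexed by the associated primes of $I$ grouped according to their witnesses, genuinely intersects to $I$. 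Establishing that this assembled family is a bona fide primary decomposition of $I$, equivalently that $I(\rho(m)) \subseteq C_P$, is the crux of the argument.
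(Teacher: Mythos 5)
Your outer reductions are sound: deriving~\eqref{eqn:unmixedDecompositionExplicit} and the final primary decomposition from~\eqref{eqn:unmixedDecomposition} via Theorem~\ref{thm:hullDecomposition} (including the check that $(J_m + I(\rho(m)_\ell)):(\prod_{i\in\Delta}x_i)^\infty = (I + I(\rho(m)_\ell)):(\prod_{i\in\Delta}x_i)^\infty$), the structural facts about $J_m = (I+I(\rho(m))):(\prod_{i\in\Delta}x_i)^\infty$ being $\Delta$-cellular with $J_m\cap\kk[\NN^\Delta] = I(\rho(m))$, and the easy inclusion $I \subseteq \bigcap_m \hull(J_m)$ all check out and match what the paper does. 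But the proof stops exactly where the theorem begins. Your final paragraph concedes that the containment $\bigcap_m \hull(J_m) \subseteq I$ cannot be extracted prime-by-prime from an arbitrary minimal primary decomposition $\{Q_P\}$ (the witness $m$ lies in $P=\sqrt{Q_P}$, so it cannot be cancelled), and that one must instead \emph{choose} the embedded components to be the minimal primary components of the $J_{m(P)}$ and then ``verify that the resulting family genuinely intersects to $I$.'' That verification cannot be delegated to ``the technical results of Section~\ref{sec:witnesses}'': those results (about $\Memb$, witness monomials, and $\hull$) control the \emph{minimal} components, not which $P$-primary ideals are admissible as \emph{embedded} components of $I$. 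The claim you leave unestablished is, as you yourself say, the crux --- it is essentially equivalent to the statement being proved --- and nothing in your write-up supplies it. So the hard inclusion is unproven.

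The paper closes this gap with an argument of a different flavor, Theorem~\ref{thm:inductionStep}. For a minimal embedded lattice ideal $I(\tau_1)$ of $I$ one sets $M = \langle\, m \in \Memb(I) \mid (I:m)\cap\kk[\NN^\Delta] \supseteq I(\tau_1)\,\rangle$ and proves the exact ideal identity $I = (I+M)\cap(I+I(\tau_1))$ directly: an element $f$ of the intersection is split into terms, and Proposition~\ref{propo:binomial+monomial} forces the part not already accounted for to lie in $I$. One then saturates by $\prod_{i\in\Delta}x_i$, shows that $(I+M):(\prod_{i\in\Delta}x_i)^\infty$ has strictly fewer minimal embedded lattice ideals than $I$, and inducts on their number; the full theorem then follows by Noetherian induction on $I$, applying the induction hypothesis to each $K_j = (I+I(\tau_j)):(\prod_{i\in\Delta}x_i)^\infty$ and checking $\Memb(K_j)\subseteq\Memb(I)$. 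This genuinely ideal-theoretic (not prime-by-prime) step is the missing ingredient in your proposal.
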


One of the reasons that binomial primary decomposition is more transparent in
characteristic zero is that, in that case, the $P$-primary component
of a binomial ideal $I$ contains the saturated lattice ideal $P \cap
\kk[x_j \mid x_j \notin P]$, while 
over positive characteristic it is difficult to determine what
lattice ideal takes the place of $P \cap \kk[x_j \mid x_j \notin
P]$. It is this question that Theorem~\ref{thm:unmixedDecomposition}
seeks to answer for cellular binomial ideals.

In~\cite[Section~4]{OS}, Ojeda and Piedra provide an effectively
computable unmixed decomposition for a cellular binomial ideal (over
any field), which has been implemented by Kahle~\cite{KM2}. This
decomposition proceeds by computing ideal quotients with respect to
well chosen binomials. In this case, the ideal quotient is still
binomial, and can be found by adding an explicit monomial ideal. 
The unmixed decomposition procedure~\cite[Algorithm 4]{OS} is a
recursive method in the same spirit as the cellular decomposition of a
binomial ideal, and likewise shares some of its drawbacks: many choices need
to be made, and the output is therefore not canonical.

We point out that the unmixed decomposition of cellular binomial
ideals~\eqref{eqn:unmixedDecompositionExplicit} is 
usually coarser than (the cellular case of) the mesoprimary
decompositions of Kahle and Miller, as is shown
in~\cite[Example~15.13]{KM}. Nevertheless,~\eqref{eqn:unmixedDecompositionExplicit}
does share some of the features of mesoprimary decomposition: it is
canonical, in the sense that no choices need to be 
made in order to perform it; it requires no assumptions on the field
$\kk$; and it contains enough information to easily obtain a primary
decomposition from it, when working over an algebraic closure
$\bar{\kk}$ of $\kk$.

\section{Associated lattices and witness monomials}
\label{sec:witnesses}

In this section, we study the ideals $\Memb(I)$ and  $I+\Memb(I)$ (see
Definition~\ref{def:embedded+Memb}) for a cellular binomial ideal $I$.

As a first step, we use Theorem~\ref{thm:latticePrimaryDecomposition} and
Lemma~\ref{lemma:latticeIdealOfCellular} to characterize 
the minimal associated primes of a cellular binomial ideal.

\begin{lemma}
\label{lemma:minimalPrimesOfCellular}
Let $I$ be a $\Delta$-cellular binomial ideal in $\kk[x]$. Let
$(L_\rho,\rho)$ be a partial character on $\ZZ^\Delta$ such that
$I\cap \kk[\NN^\Delta]=I(\rho)$. The minimal associated primes of $I$
are of the form $\kk[x]\cdot P + \<x_i \mid i \in \bDelta\>$, where
$P \subset \kk[\NN^\Delta]$ is a minimal associated prime of $I(\rho)$.
If $\kk$ is algebraically closed, the
minimal associated primes of $I$ are
\[
\kk[x] \cdot I(\chi_j) + \<x_i \mid i\in \bDelta \> \quad \text{for }
j=1,\dots, g,
\]
where $g$ and $\chi_1,\dots,\chi_g$ are given by
Theorem~\ref{thm:latticePrimaryDecomposition} applied to the lattice
ideal $I(\rho)$.
\end{lemma}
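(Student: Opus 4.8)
The plan is to reduce the computation of the minimal primes of $I$ to that of the lattice ideal $I(\rho)$ by passing to the quotient that kills the nilpotent variables. Because $x_i$ is nilpotent modulo $I$ for each $i\in\bDelta$, one has $\langle x_i\mid i\in\bDelta\rangle\subseteq\sqrt I$, so $\sqrt I=\sqrt{I+\langle x_i\mid i\in\bDelta\rangle}$ and $I$ has the same minimal primes as $I+\langle x_i\mid i\in\bDelta\rangle$. Moreover, since each $x_i$ with $i\in\Delta$ is a nonzerodivisor modulo $I$, no associated (in particular no minimal) prime of $I$ contains any such $x_i$. Consequently every minimal prime $\mathfrak p$ of $I$ contains $\langle x_i\mid i\in\bDelta\rangle$, avoids the $\Delta$-variables, and therefore equals $\kk[x]\cdot P+\langle x_i\mid i\in\bDelta\rangle$ with $P=\mathfrak p\cap\kk[\NN^\Delta]$ a prime of $\kk[\NN^\Delta]$ containing $I(\rho)$.

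First I would invoke the standard correspondence for the quotient map $\pi\colon\kk[x]\to\kk[x]/\langle x_i\mid i\in\bDelta\rangle\cong\kk[\NN^\Delta]$: primes of $\kk[x]$ containing $\langle x_i\mid i\in\bDelta\rangle$ correspond bijectively, via $P\mapsto\kk[x]\cdot P+\langle x_i\mid i\in\bDelta\rangle$, to primes of $\kk[\NN^\Delta]$, and this identifies the minimal primes of $I+\langle x_i\mid i\in\bDelta\rangle$ with the minimal primes of the image $\bar I:=\pi(I)\subseteq\kk[\NN^\Delta]$.

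The crux is then the radical equality $\sqrt{\bar I}=\sqrt{I(\rho)}$. The inclusion $\sqrt{I(\rho)}\subseteq\sqrt{\bar I}$ is immediate from $I(\rho)=I\cap\kk[\NN^\Delta]\subseteq\bar I$. For the reverse I would fix a minimal prime $Q$ of $I(\rho)$ and show $\bar I\subseteq Q$; since $\prod_{i\in\Delta}x_i$ is a nonzerodivisor modulo the lattice ideal $I(\rho)$, such a $Q$ contains no monomial. It suffices to check that every binomial generator of $I$ maps into $Q$, and here cellularity is decisive: a generator $x^a-\lambda x^b$ with $a\in\NN^\Delta$ but $\supp(b)\cap\bDelta\neq\emptyset$ cannot occur, because $x^b$ nilpotent modulo $I$ would force $x^a$ nilpotent modulo $I$, while $x^a$ is a nonzerodivisor modulo $I$, so $x^a\in I\cap\kk[\NN^\Delta]=I(\rho)$, contradicting that a lattice ideal has no monomials. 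Thus each binomial generator either has both monomials supported in $\Delta$ (so it lies in $I(\rho)\subseteq Q$) or has both monomials divisible by some $\bDelta$-variable (so its image under $\pi$ is $0\in Q$); a pure monomial generator must likewise involve a $\bDelta$-variable, since otherwise it would lie in $I(\rho)$, and hence maps to $0$. Therefore $\bar I\subseteq Q$ for every minimal prime $Q$ of $I(\rho)$, giving $\sqrt{\bar I}\subseteq\sqrt{I(\rho)}$.

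Combining the three steps, the minimal primes of $I$ correspond bijectively, via $P\mapsto\kk[x]\cdot P+\langle x_i\mid i\in\bDelta\rangle$, to the minimal primes of $\bar I$, which are exactly the minimal associated primes of $I(\rho)$; this is the first assertion. For the second assertion I would feed Theorem~\ref{thm:latticePrimaryDecomposition} into the first: over an algebraically closed field $\kk$ that theorem identifies the associated primes of the lattice ideal $I(\rho)$ as $I(\chi_1),\dots,I(\chi_g)$, all of them minimal, so the minimal primes of $I$ are precisely $\kk[x]\cdot I(\chi_j)+\langle x_i\mid i\in\bDelta\rangle$ for $j=1,\dots,g$. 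I expect the radical equality of the third step — and in particular ruling out the ``mixed'' binomial generators through the nonzerodivisor/nilpotent dichotomy — to be the main obstacle, everything else being the standard prime correspondence in a quotient together with a direct appeal to Theorem~\ref{thm:latticePrimaryDecomposition}.
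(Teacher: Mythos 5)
Your proof is correct and takes essentially the same route as the paper: both arguments use cellularity to reduce the minimal primes of $I$ to those of the lattice ideal $I(\rho)=I\cap\kk[\NN^\Delta]$ and then quote Theorem~\ref{thm:latticePrimaryDecomposition}. Your version is in fact slightly more complete: the paper only shows that every minimal prime of $I$ contains some $\kk[x]\cdot P+\<x_i\mid i\in\bDelta\>$ and leaves implicit the reverse containment $I\subseteq\kk[x]\cdot P+\<x_i\mid i\in\bDelta\>$ needed to conclude equality, which is exactly what your ``no mixed binomial generators'' argument (via the nilpotent/nonzerodivisor dichotomy and the fact that $I(\rho)$ contains no monomials) supplies.
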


\begin{proof}
Let $Q \subset \kk[x]$ be a prime ideal containing $I$.
Since $I$ is $\Delta$-cellular, $Q$ must
contain the variables $x_i$ for $i \in \bDelta$. 
Also, the prime ideal $Q\cap \kk[\NN^\Delta]$ contains $I(\rho)$, and therefore
contains some minimal prime $P$ of
$I(\rho)$, so that
$Q \supseteq \kk[x] \cdot P + \<x_i \mid i
\in \bDelta\>$, which gives the desired form for the minimal primes
over $I$.
\end{proof}

We turn our attention now to the monomial ideal $\Memb(I)$ for a
$\Delta$-cellular binomial ideal $I$.
The following result gives a useful criterion to determine whether a
monomial belongs to $\Memb(I)$.

\begin{lemma}
\label{lemma:monomialsInMemb}
Let $I$ be a $\Delta$-cellular binomial ideal in $\kk[x]$, and let
$(L_{\rho},\rho)$ be the partial character on $\ZZ^\Delta$ that
satisfies $I \cap \kk[\NN^\Delta] = I(\rho)$. 
A monomial $m \in \kk[\NN^\bDelta] \minus I$ belongs to $\Memb(I)$
if and only if there exists a binomial $x^u-\lambda x^v \in
\kk[\NN^\Delta]$ such that
\begin{enumerate}
\item $\lambda \neq 0$,
\item $u-v \notin \Sat(L_\rho)$, and
\item $m(x^u-\lambda x^v) \in I$.
\end{enumerate}
Moreover, in this case, $mx^u,mx^v \notin I$, and we may assume $\gcd(x^u,x^v)=1$.
\end{lemma}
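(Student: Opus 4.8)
The plan is to attach to each candidate monomial $m$ a canonical associated lattice $L_m$, reduce the assertion $m \in \Memb(I)$ to the statement that $L_m$ is embedded, and then translate "$L_m$ is embedded" into the binomial condition (1)--(3). First I would show that for any $m \in \kk[\NN^\bDelta] \minus I$ the elimination ideal $(I:m) \cap \kk[\NN^\Delta]$ is a lattice ideal $I(\sigma_m)$ for a unique partial character $(L_m,\sigma_m)$ on $\ZZ^\Delta$. It is binomial by Lemmas~\ref{lemma:idealQuotient} and~\ref{lemma:elimination}, and cellularity forces it to be saturated with respect to $\prod_{i\in\Delta} x_i$: if $(\prod_{i\in\Delta} x_i^{w_i})f \in (I:m)$, then $m(\prod_{i\in\Delta} x_i^{w_i})f \in I$, and since $\prod_{i\in\Delta} x_i^{w_i}$ is a nonzerodivisor modulo $I$ we obtain $f \in (I:m)$. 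Invoking~\cite[Corollary~2.5]{ES} exactly as in the proof of Lemma~\ref{lemma:latticeIdealOfCellular} then identifies this saturated ideal with $I(\sigma_m)$, which in particular contains no monomial. Because $1 \mid m$ gives $(I:1)=I \subseteq (I:m)$, we always have $L_\rho \subseteq L_m$, hence $\Sat(L_\rho) \subseteq \Sat(L_m)$.

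Next I would reduce membership in $\Memb(I)$ to embeddedness of $L_m$. If $L_m$ is embedded, then $m \in \kk[\NN^\bDelta]\minus I$ is by definition one of the generating witness monomials, so $m \in \Memb(I)$. Conversely, since $\Memb(I)$ is a monomial ideal, $m \in \Memb(I)$ means $m$ is divisible by a generating witness monomial $m'$, whose associated lattice $L_{m'}$ is embedded; writing $m = m'm''$ gives $(I:m') \subseteq (I:m)$, so $L_{m'} \subseteq L_m$ and therefore $\Sat(L_\rho) \subsetneq \Sat(L_{m'}) \subseteq \Sat(L_m)$, showing $L_m$ is embedded. Thus $m \in \Memb(I)$ if and only if $\Sat(L_m) \neq \Sat(L_\rho)$, and since $L_\rho \subseteq L_m$ this is equivalent to the existence of some $w \in L_m$ with $w \notin \Sat(L_\rho)$ (if $L_m \subseteq \Sat(L_\rho)$ then $\Sat(L_m) = \Sat(L_\rho)$, and conversely).

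It remains to translate the condition "$\exists\, w \in L_m \minus \Sat(L_\rho)$" into (1)--(3), and this is where the one genuinely delicate point lies. The key fact I would use is that a binomial $x^u - \lambda x^v$ with $\lambda \neq 0$ lies in $I(\sigma_m)$ if and only if $u-v \in L_m$ and $\lambda = \sigma_m(u-v)$; this follows from homogeneity of $I(\sigma_m)$ under the $\ZZ^\Delta/L_m$-grading together with the absence of monomials in $I(\sigma_m)$. Granting this, for $w \in L_m \minus \Sat(L_\rho)$ I would split $w$ into its positive and negative parts $w_+, w_-$ and take $x^{w_+} - \sigma_m(w)x^{w_-} \in I(\sigma_m) = (I:m)\cap\kk[\NN^\Delta]$; multiplying by $m$ places it in $I$, and it satisfies (1)--(3) with coprime terms. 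Conversely, a binomial satisfying (3) lies in $(I:m)\cap\kk[\NN^\Delta] = I(\sigma_m)$, so by the key fact and (1) we get $u-v \in L_m$, while (2) says $u-v \notin \Sat(L_\rho)$, exhibiting the desired $w$. The main obstacle is precisely this lattice-ideal fact; everything else is bookkeeping with saturations and the nonzerodivisor property of the cellular variables.

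Finally, for the "Moreover" clause: if $mx^u \in I$, then $x^u \in (I:m)\cap\kk[\NN^\Delta] = I(\sigma_m)$, impossible since lattice ideals contain no monomials, so $mx^u, mx^v \notin I$; and we may assume $\gcd(x^u,x^v)=1$ because cancelling the common monomial factor $d = \gcd(x^u,x^v)$, a product of cellular variables and hence a nonzerodivisor modulo $I$, from $m\,d\,(x^{u'}-\lambda x^{v'}) \in I$ leaves $m(x^{u'}-\lambda x^{v'}) \in I$ with $u'-v' = u-v \notin \Sat(L_\rho)$ and $\lambda \neq 0$.
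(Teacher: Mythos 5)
Your proof is correct and follows essentially the same route as the paper: identify $(I:m)\cap\kk[\NN^\Delta]$ as a lattice ideal $I(\sigma_m)$ and translate membership in $\Memb(I)$ into the condition that $L_m$ meets the complement of $\Sat(L_\rho)$, i.e.\ that $L_m$ is embedded. If anything you are slightly more careful than the paper's own argument in the forward direction, where you explicitly justify that an arbitrary monomial of $\Memb(I)$ (not just a generating witness monomial) has an embedded associated lattice, via divisibility of monomials in a monomial ideal and the monotonicity $(I:m')\subseteq(I:m)$.
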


\begin{proof}
If $m \in \Memb(I)$, there exists a partial character $(L_\tau,\tau)$
on $\ZZ^\Delta$ such that $(I:m) \cap \kk[\NN^\Delta] = I(\tau)$, and
$\rank(L_\tau) > \rank(L_\rho)$. This rank condition implies that we may 
choose $x^u-\lambda 
x^v \in I(\tau) \subseteq \kk[\NN^\Delta]$ such that 
$u-v \notin \Sat(L_\rho)$. 
%
Since $I(\tau)$ contains no monomials, we see that $\lambda \neq 0$.
Moreover $m x^u, mx^v \notin I$ by cellularity, because $m \notin I$ and $x^u, x^v \in
\kk[\NN^{\Delta}]$. 
Similarly, we may assume that $\gcd(x^u,x^v)=1$.

For the converse, let $m \notin I$ be a monomial in
$\kk[\NN^{\bDelta}]$, and suppose there is a binomial $x^u-\lambda x^v 
\in \kk[\NN^\Delta]$ satisfying the three conditions required
above.
By the third condition, $x^u-\lambda x^v \in (I:m)\cap
\kk[\NN^\Delta]$. The binomial ideal $(I:m)$ 
is $\Delta$-cellular, so that $(I:m)\cap \kk[\NN^\Delta]
= I(\tau)$ for some partial character $(L_\tau,\tau)$ on $\ZZ^\Delta$.  
Since $(I:m) \supseteq I$, we have $I(\tau) \supseteq I(\rho)$, and
therefore $L_\tau \supseteq  L_\rho$. Moreover, $u-v \in L_\tau \minus
\Sat(L_\rho)$, which implies that $L_\tau$ is an embedded
associated lattice to $I$ and $m\in\Memb(I)$.
\end{proof}

If $I$ is a cellular binomial ideal over an algebraically closed
field, Theorem~\ref{thm:cellularWitnesses} gives a correspondence
between the associated 
primes of $I$  and the associated
lattices of $I$. The following result allows us to remove the
base field assumption on this correspondence (see Corollary~\ref{coro:associatedPrimes}).

\begin{theorem}
\label{thm:associatedPrimesFieldExtension}
Let $J \subset \kk[x]$ be a proper ideal, and let $\bar{J}$ be the
extension of $J$ to $\bar{\kk}[x]$, where $\bar{\kk}$ is an algebraic
closure of $\kk$. Let $P \subset \kk[x]$ be a prime ideal
containing $J$, and let $\bar{P} \subset \bar{\kk}[x]$ be a prime
ideal lying over $P$. (Given $P$, the existence of $\bar{P}$ follows
since $\bar{\kk}[x]$ is integral over $\kk[x]$.) The following hold.
\begin{enumerate}
\item The ideals $P$ and $\bar{P}$ have the same codimension.
\item $P$ is an associated prime of $J$ if and only if $\bar{P}$ is
  an associated prime of $\bar{J}$.
\item $P$ is a minimal prime of $J$ if and only if $\bar{P}$ is a
  minimal prime of $\bar{J}$.
\item $J$ is unmixed if and only if $\bar{J}$ is unmixed.
\end{enumerate} 
\end{theorem}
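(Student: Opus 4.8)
The plan is to establish the four assertions about the relationship between associated, minimal, and unmixed structure of $J$ and its extension $\bar J$ by leveraging the fact that $\bar{\kk}[x]$ is a faithfully flat (indeed free) integral extension of $\kk[x]$, so that the standard ``going-up'' and ``lying-over'' machinery of commutative algebra applies. Throughout I would work with a fixed prime $P \supseteq J$ in $\kk[x]$ and a prime $\bar P$ lying over it in $\bar\kk[x]$, as in the statement.

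\textbf{Codimension (1).} For the equality of codimensions I would use that $\bar\kk[x]$ is integral over $\kk[x]$, so dimension is preserved under contraction and any chain of primes in $\kk[x]$ below $P$ lifts (by lying-over and going-up) to a chain of the same length in $\bar\kk[x]$ below $\bar P$, while conversely a chain below $\bar P$ contracts to a chain below $P$ without collapsing (incomparability, since the extension is integral). This gives $\operatorname{height}(\bar P) = \operatorname{height}(P)$, and since both rings have the same Krull dimension $n$, the codimensions agree.

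\textbf{Associated, minimal, unmixed (2)--(4).} For (2) I would pass to localizations: $P \in \Ass(\kk[x]/J)$ iff $\operatorname{depth}_{P}$-type data — more concretely, iff there is an element whose annihilator in $\kk[x]/J$ is exactly $P$ — and I would transfer this across the flat base change $\kk[x] \to \bar\kk[x]$. The cleanest route is the formula for associated primes under flat extension: for a faithfully flat ring map $R \to S$, the associated primes of $S/\bar J = S \otimes_R (R/J)$ lying over a given $P \in \Spec R$ are governed by $\Ass_R(R/J)$ together with $\Ass_{S}(S/PS)$; since $\bar\kk/\kk$ is a (possibly infinite, but directed union of finite, hence still faithfully flat) field extension, $S/PS = \bar\kk \otimes_\kk \kappa(P)[\ldots]$ decomposes and every prime $\bar P$ over an associated prime $P$ is itself associated, and conversely a contracted associated prime is associated. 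I would phrase this so that $P \in \Ass(J)$ iff some (equivalently every) $\bar P$ over $P$ lies in $\Ass(\bar J)$. Assertion (3) then follows by combining (1) and (2): $P$ is minimal over $J$ iff $P \in \Ass(J)$ has minimal codimension among associated primes, and the codimension-preserving correspondence from (1) matches minimal-codimension associated primes on both sides. Finally (4) is immediate from (2) and (1): $J$ is unmixed iff all its associated primes share one codimension, and this common-codimension condition is preserved by the codimension-preserving bijection on associated primes furnished by (1) and (2) (using that the extension of $\kk$ is faithfully flat, so \emph{every} associated prime of $\bar J$ lies over an associated prime of $J$, ensuring no new codimensions appear upstairs).

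\textbf{Main obstacle.} The delicate point is handling the extension $\bar\kk/\kk$ when it is infinite and, worse, inseparable in positive characteristic; here $\bar\kk \otimes_\kk \kappa(P)$ need not be reduced, so one cannot naively claim a clean bijection of primes. I expect the key technical step to be controlling $\Ass_S(S/PS)$, i.e.\ verifying that although $\bar\kk \otimes_\kk \kappa(P)$ may be non-reduced, its associated primes still lie over the zero ideal and correspond exactly to the minimal primes, so that each $\bar P$ over an associated $P$ is genuinely associated (and not merely embedded in some spurious way). The standard remedy is to reduce to a \emph{finite} extension: since $J$ is finitely generated and $\Ass(J)$ is finite, there is a finite (normal) subextension $\kk'/\kk$ over which all the relevant primes and primary decompositions are already defined, and for a finite free extension the base-change formulas for $\Ass$ are unconditional. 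I would therefore first reduce to the finite case and then invoke the flat base-change description of associated primes there, which sidesteps the reducedness issue entirely. (This finite-reduction is also precisely what the introduction anticipates when it remarks that ``only a finite extension is necessary.'')
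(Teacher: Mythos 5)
Your plan for items (2)--(4) amounts to reproving, rather than citing, the result the paper leans on: the paper's own proof of (2) and (3) is a one-line reference to Sharp's theorem on associated primes under base field extension (with Seidenberg for minimal primes), and item (1) is disposed of by noting that $\bar{\kk}[x]$ is integral over the \emph{normal domain} $\kk[x]$. Your flat base-change route is essentially the standard proof of Sharp's theorem, so the strategy is viable; moreover, the ``delicate point'' you flag about $\bar{\kk}\otimes_\kk \kappa(P)$ being non-reduced resolves more easily than you suggest: $\bar{\kk}[x]/P\bar{\kk}[x] \cong \bar{\kk}\otimes_\kk(\kk[x]/P)$ is a free, hence torsion-free, module over the domain $\kk[x]/P$, so every associated prime of it contracts to $(0)$ and is therefore (by incomparability) a minimal prime, and these are exactly the primes lying over $P$. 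No reduction to finite subextensions is needed for this step, although that reduction is also a correct way to proceed.

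There are, however, two concrete errors. First, your derivation of (3) rests on the claim that ``$P$ is minimal over $J$ iff $P \in {\rm Ass}(J)$ has minimal codimension among associated primes,'' which is false: for $J = \<xy,xz\> = \<x\> \cap \<y,z\>$ in $\kk[x,y,z]$, the prime $\<y,z\>$ is minimal over $J$ but has codimension $2$ while the minimum over ${\rm Ass}(J)$ is $1$. Minimality is an inclusion-theoretic condition, not a codimension condition, so (3) does not follow from (1) and (2) the way you assert. The correct argument is direct: if $\bar{Q} \subsetneq \bar{P}$ both contain $\bar{J}$, contracting gives $Q \subsetneq P$ containing $J$ (strictness by incomparability); conversely, if $Q \subsetneq P$ contains $J$, going-down produces $\bar{Q} \subsetneq \bar{P}$ lying over $Q$ and containing $\bar{J}$. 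Second, in item (1) you cannot lift a chain \emph{below} the given $\bar{P}$ using ``lying-over and going-up'': those produce a chain terminating at \emph{some} prime over $P$, not necessarily at $\bar{P}$. What you need is going-down, which does hold here --- either because the extension is free hence faithfully flat, as you observe at the outset, or because $\bar{\kk}[x]$ is integral over the normal domain $\kk[x]$, which is precisely why the paper's proof of (1) mentions normality. This second point is a misnamed tool rather than a fatal flaw, but as written the chain-lifting step does not land at $\bar{P}$.
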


\begin{proof}
The first item holds since $\bar{\kk}[x]$ is integral over $\kk[x]$,
and $\kk[x]$ is a normal domain.
The second and third items are proved in~\cite[Theorem~2.5]{Sh} (see
also~\cite{S}). The last item is a combination of the first two.
\end{proof}

\begin{corollary}
\label{coro:MembAlgebraicClosure}
Let $I$ be a proper $\Delta$-cellular binomial ideal in $\kk[x]$ and let
$\bar{I}$ be the extension of $I$ to $\bar{\kk}[x]$, where $\bar{\kk}$
is an algebraic closure of $\kk$. Then $\bar{I}$ is a
proper $\Delta$-cellular binomial ideal, and the extension of
$\Memb(I)$ to $\bar{\kk}[x]$ is $\Memb(\bar{I})$. 
\end{corollary}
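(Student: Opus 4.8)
The plan is to prove Corollary~\ref{coro:MembAlgebraicClosure} in three stages, each building on the previous. First I would verify that $\bar{I}$ is a proper $\Delta$-cellular binomial ideal. The ideal $\bar{I}$ is clearly binomial, being generated by the same binomials as $I$. Properness follows from Theorem~\ref{thm:associatedPrimesFieldExtension}: since $I$ is proper it has a minimal prime $P$, and by item~(3) any prime $\bar{P}$ lying over $P$ is a minimal prime of $\bar{I}$, so $\bar{I} \neq \bar{\kk}[x]$. For cellularity, I would argue that a variable $x_i$ is nilpotent modulo $I$ if and only if it is nilpotent modulo $\bar{I}$ (nilpotency is detected by $x_i^N \in I$ for some $N$, and $I \cap \kk[x]$ generators extend faithfully), and that $x_i$ is a nonzerodivisor modulo $I$ if and only if it is a nonzerodivisor modulo $\bar{I}$. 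The nonzerodivisor direction is where field extension must be handled with care: one should check that the associated primes of $\bar{I}$ are exactly the primes lying over associated primes of $I$ (item~(2) of Theorem~\ref{thm:associatedPrimesFieldExtension}), and that $x_i$ is a zerodivisor modulo an ideal precisely when $x_i$ lies in some associated prime. Since for $i \in \bDelta$ the variable $x_i$ lies in every minimal prime of $I$ (cf.\ Lemma~\ref{lemma:minimalPrimesOfCellular}), the same holds upstairs, giving the same cellular index set $\Delta$.

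The second and main stage is to show that $\Memb(I)$ and $\Memb(\bar{I})$ have the same monomial generators. The key point is that both ideals are generated by monomials in $\kk[\NN^\bDelta]$ (respectively, monomials in $\bar{\kk}[\NN^\bDelta]$, but any monomial has coefficients in the prime field, hence lies in $\kk[\NN^\bDelta]$). So it suffices to show that for a monomial $m \in \kk[\NN^\bDelta]$, we have $m \in \Memb(I)$ if and only if $m \in \Memb(\bar{I})$. Here I would invoke Lemma~\ref{lemma:monomialsInMemb}, which gives a field-independent criterion: $m \in \Memb(I)$ iff $m \notin I$ and there is a binomial $x^u - \lambda x^v \in \kk[\NN^\Delta]$ with $\lambda \neq 0$, $u-v \notin \Sat(L_\rho)$, and $m(x^u - \lambda x^v) \in I$. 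The condition $m \notin I$ is equivalent to $m \notin \bar{I}$ since $\bar{I} \cap \kk[x] = I$ (faithful flatness of the field extension). The lattice $L_\rho$ with $I \cap \kk[\NN^\Delta] = I(\rho)$ satisfies $\bar{I} \cap \bar{\kk}[\NN^\Delta] = I(\bar\rho)$ with $L_{\bar\rho} = L_\rho$, again because elimination ideals are computed by Gr\"obner bases over the base field and hence commute with field extension; consequently $\Sat(L_\rho)$ is unchanged. The condition $m(x^u - \lambda x^v) \in \bar{I}$ for a binomial with $\bar{\kk}$-coefficients must be reconciled with the existence of such a binomial already over $\kk$.

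The main obstacle I anticipate is precisely this last reconciliation: a priori a witness binomial for $m \in \Memb(\bar{I})$ could have a coefficient $\lambda \in \bar{\kk} \setminus \kk$. To handle this, I would use that $(\bar{I} : m) \cap \bar{\kk}[\NN^\Delta] = I(\bar\tau)$ is the extension to $\bar{\kk}$ of $(I : m) \cap \kk[\NN^\Delta] = I(\tau)$, so $L_{\bar\tau} = L_\tau$. Indeed, the ideal quotient $(I:m)$ commutes with field extension (it is $\{f : fm \in I\}$, and membership is linear-algebraic, detected over $\kk$), and elimination commutes with field extension. Therefore the rank condition $\rank(L_{\bar\tau}) > \rank(L_{\bar\rho})$ defining embeddedness of the associated lattice upstairs is identical to $\rank(L_\tau) > \rank(L_\rho)$ downstairs, i.e.\ the set of embedded associated lattices and their witness monomials coincide. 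This shows $m \in \Memb(I) \iff m \in \Memb(\bar{I})$ for $m \in \kk[\NN^\bDelta]$, and since both monomial ideals are generated in $\kk[\NN^\bDelta]$, the extension of $\Memb(I)$ to $\bar{\kk}[x]$ equals $\Memb(\bar{I})$, completing the proof. The crux is thus verifying that all three operations in play—elimination $(\,\cdot\,) \cap \kk[\NN^\Delta]$, ideal quotient $(\,\cdot\, : m)$, and the passage to the defining lattice of a lattice ideal—are stable under the field extension $\kk \hookrightarrow \bar{\kk}$, which follows from faithful flatness and the Gr\"obner-basis computability of these operations over the ground field.
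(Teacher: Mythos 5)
Your proposal is correct in outline and reaches the right conclusion, but it takes a genuinely different route from the paper's, and one step needs to be supplied. The paper never proves that the associated lattices are literally preserved under $\kk\hookrightarrow\bar{\kk}$: from $(\bar{I}:m)\cap\kk[x]=(I:m)$ it gets only the containment $L_\tau\subseteq L_{\bar{\tau}}$, and then argues that the two lattices have the same \emph{rank}, because $I(\tau)$ and $I(\bar{\tau})$ have the same codimension by Theorem~\ref{thm:associatedPrimesFieldExtension}(1) and codimension equals lattice rank by \cite[Theorem~2.1]{ES}; the embeddedness condition is then transferred purely through ranks of saturations. You instead assert the stronger equalities $L_{\bar{\rho}}=L_\rho$ and $L_{\bar{\tau}}=L_\tau$, deduced from the commutation of $(\,\cdot\,:m)$ and of elimination with the field extension. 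Those commutation facts are true (flatness for the quotient, Gr\"obner bases for elimination), but they only yield that $I(\bar{\tau})$ is the \emph{extension} of $I(\tau)$ as an ideal; to conclude $L_{\bar{\tau}}=L_\tau$ you still need that extending a lattice ideal does not enlarge its lattice, i.e.\ that $x^a-\mu x^b\in\bar{\kk}[\NN^\Delta]\cdot I(\tau)$ with $\mu\neq 0$ forces $a-b\in L_\tau$. This is true---$\kk[\NN^\Delta]/I(\tau)$ is a free $\kk$-module on the classes of $\NN^\Delta$ modulo $L_\tau$, and tensoring with $\bar{\kk}$ preserves that basis, so monomials in distinct classes remain independent---but it is not a formal consequence of ``Gr\"obner bases over the ground field,'' and without it both directions of your equivalence are at risk (in the forward direction, $u-v\notin\Sat(L_\rho)$ does not a priori give $u-v\notin\Sat(L_{\bar{\rho}})$, since $\Sat(L_\rho)\subseteq\Sat(L_{\bar{\rho}})$). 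Once that lemma is in place your argument closes, and it actually buys something the paper's softer rank argument does not: the associated lattices themselves, not just their ranks, are unchanged under the extension.

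Two smaller points. Properness of $\bar{I}$ is most cleanly obtained from $\bar{I}\cap\kk[x]=I$ (faithful flatness, or the paper's linear-algebra observation), rather than through minimal primes, though your route also works. And your claimed equivalence ``$m\in\Memb(I)\iff m\in\Memb(\bar{I})$ for every monomial $m\in\kk[\NN^\bDelta]$'' should be restricted to minimal generators of the two monomial ideals (these are witness monomials, hence not in $I$, resp.\ $\bar{I}$), since Lemma~\ref{lemma:monomialsInMemb} does not apply to monomials lying in the ideal itself; comparing generators is all that is needed to identify the two monomial ideals.
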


\begin{proof}
To see that $\bar{I} \subset \bar{\kk}[x]$ is proper, observe that
$\bar{I}\cap \kk[x]=I$. This uses the fact that a system of linear
equations defined over $\kk$ that has a solution over an extension
field of $\kk$, must have a solution over $\kk$.
Similarly, if $m$ is a monomial, $(\bar{I}:m) \cap \kk[x] = (I:m)$. It
follows that $\bar{I}$ is $\Delta$-cellular. 

We wish to show that $\Memb(I)$ and $\Memb(\bar{I})$ contain the same
monomials. 

Let $m\neq 0$ be a minimal monomial generator of $\Memb(\bar{I})$. Let
$(L_{\bar{\tau}},\bar{\tau})$ be a character on $\ZZ^\Delta$ with
$\bar{\tau}:\ZZ^\Delta \to \bar{\kk}^*$ such that  
$(\bar{I}:m) \cap \bar{\kk}[\NN^\Delta] = I(\bar{\tau})$, and let
$(L_\tau,\tau)$ with $\tau : \ZZ^\Delta \to \kk^*$ such that $(I:m)
\cap \kk[\NN^\Delta] = I(\tau)$. Then $I(\tau) = I(\bar{\tau})\cap \kk[\NN^\Delta]$, so
that $L_\tau \subset L_{\bar{\tau}}$ and $\bar{\tau}$ restricts to
$\tau$ on $L_\tau$. By
Theorem~\ref{thm:associatedPrimesFieldExtension}, $I(\tau)$ and
$I(\bar{\tau})$ have the same codimension, and therefore
by~\cite[Theorem~2.1]{ES}, the lattices $L_\tau$ and $L_{\bar{\tau}}$
have the same rank, and so do $\Sat(L_\tau)$ and
$\Sat(L_{\bar{\tau}})$.

If we define partial characters $(L_\rho,\rho)$ and
$(L_{\bar{\rho}},\bar{\rho})$  on $\ZZ^\Delta$ via $I\cap \kk[\NN^\Delta] = I(\rho)$ and
$\bar{I} \cap \bar{\kk}[\NN^\Delta] = I(\bar{\rho})$, then as before, $L_\rho$ and
$L_{\bar{\rho}}$ have the same 
rank, and so do $\Sat(L_\rho)$ and $\Sat(L_{\bar{\rho}})$. 
Since $m \in \Memb(\bar{I})$ is a minimal generator, we have $\Sat(L_{\bar{\rho}}) \neq
\Sat(L_{\bar{\tau}})$, which implies that $\Sat(L_{\bar{\rho}})$ has
strictly lower rank than $\Sat(L_{\bar{\tau}})$. But then the same
holds for $\Sat(L_\rho)$ and $\Sat(L_\tau)$, and consequently
$\Sat(L_\rho) \neq \Sat(L_\tau)$. We conclude that $m\in \Memb(I)$.

A similar argument shows the reverse inclusion.
%
%
\end{proof}

\begin{corollary}
\label{coro:associatedPrimes}
Let $I$ be a $\Delta$-cellular binomial ideal in $\kk[x]$. 
The
associated primes of $I$ are the ideals $\kk[x] \cdot P + \< x_i \mid i \in
\bDelta\>$, where $P \subset \kk[\NN^\Delta]$ runs over the associated
primes of lattice ideals of the form $(I:m)\cap \kk[\NN^\Delta]$, for
monomials $m
\in \kk[\NN^\bDelta]$.
\end{corollary}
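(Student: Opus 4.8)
The plan is to prove the two inclusions of the asserted description of $\Ass(I)$ separately, using Lemma~\ref{lemma:minimalPrimesOfCellular} and Theorem~\ref{thm:cellularWitnesses} after reducing to the algebraically closed case via Theorem~\ref{thm:associatedPrimesFieldExtension}. First I would record the easy structural fact, already proved inside Lemma~\ref{lemma:minimalPrimesOfCellular}, that \emph{every} associated prime $Q$ of a $\Delta$-cellular $I$ has the shape $\kk[x]\cdot P + \<x_i \mid i\in\bDelta\>$ for a prime $P\subset\kk[\NN^\Delta]$: indeed $Q$ must contain each $x_i$ with $i\in\bDelta$ by cellularity, and $P \defeq Q\cap\kk[\NN^\Delta]$ then recovers $Q$. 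So the content is entirely about \emph{which} primes $P$ of $\kk[\NN^\Delta]$ arise, and the claim is precisely that these are the associated primes of the lattice ideals $(I:m)\cap\kk[\NN^\Delta]$ as $m$ ranges over monomials in $\kk[\NN^\bDelta]$.

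Next I would prove the inclusion stating that each such prime is genuinely associated to $I$. Passing to $\bar\kk$, by Corollary~\ref{coro:MembAlgebraicClosure} the ideal $\bar I$ is still $\Delta$-cellular, and by Theorem~\ref{thm:associatedPrimesFieldExtension}(2) it suffices to exhibit the corresponding prime over $\bar\kk$ as associated to $\bar I$. Over $\bar\kk$, given a monomial $m$ and the lattice ideal $(\bar I:m)\cap\bar\kk[\NN^\Delta]=I(\tau)$, Theorem~\ref{thm:latticePrimaryDecomposition} describes the associated primes $I(\chi)$ of $I(\tau)$, and the converse half of Theorem~\ref{thm:cellularWitnesses} states exactly that for any associated prime $P$ of $I(\tau)$, the ideal $\bar\kk[x]\cdot P+\<x_i\mid i\in\bDelta\>$ is associated to $\bar I$. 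Pulling back along the bijection of Theorem~\ref{thm:associatedPrimesFieldExtension} gives the desired associated prime of $I$ over $\kk$.

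For the reverse inclusion I would take an arbitrary associated prime $Q=\kk[x]\cdot P+\<x_i\mid i\in\bDelta\>$ of $I$ and produce a witness monomial realizing $P$. Again extending to $\bar\kk$ and choosing a prime $\bar Q$ lying over $Q$, Theorem~\ref{thm:associatedPrimesFieldExtension}(2) makes $\bar Q$ associated to $\bar I$; since $\bar\kk$ is algebraically closed, $\bar Q=\bar\kk[x]\cdot I(\chi)+\<x_i\mid i\in\bDelta\>$ for a saturated partial character $(L_\chi,\chi)$, and the first (forward) half of Theorem~\ref{thm:cellularWitnesses} supplies a monomial $m\in\bar\kk[\NN^\bDelta]$ with $(\bar I:m)\cap\bar\kk[\NN^\Delta]=I(\tau)$ whose saturation is $(L_\chi,\chi)$; in particular $I(\chi)$ is an associated prime of this lattice ideal. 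The remaining descent issue is that $m$ and the lattice ideal are a priori defined over $\bar\kk$, whereas the statement asks for $m\in\kk[\NN^\bDelta]$. This is exactly the descent handled in the proof of Corollary~\ref{coro:MembAlgebraicClosure}: the witness monomial $m$ is itself a monomial, hence automatically lies in $\kk[\NN^\bDelta]$, and the identity $(\bar I:m)\cap\kk[\NN^\Delta]=(I:m)\cap\kk[\NN^\Delta]$ shows $(I:m)\cap\kk[\NN^\Delta]=I(\tau)\cap\kk[\NN^\Delta]$ is the lattice ideal over $\kk$ whose associated prime, under the correspondence of Theorem~\ref{thm:associatedPrimesFieldExtension}, recovers $P$.

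The main obstacle is the compatibility of the two field-extension correspondences: I must ensure that the bijection between associated primes over $\kk$ and over $\bar\kk$ from Theorem~\ref{thm:associatedPrimesFieldExtension} carries the decomposition $Q\mapsto P$ onto the decomposition $\bar Q\mapsto I(\chi)$ consistently, i.e.\ that $\bar Q\cap\kk[\NN^\Delta]$ really is a prime over $P=Q\cap\kk[\NN^\Delta]$, and that the witness monomial $m$ produced over $\bar\kk$ can be taken with $\kk$-coefficients (here triviality is automatic since $m$ is a monomial). Once this descent bookkeeping is pinned down, both inclusions follow formally, and the corollary is complete.
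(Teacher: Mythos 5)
Your proposal is correct and follows essentially the same route as the paper: the paper's (two-sentence) proof likewise reduces to the algebraically closed case via Corollary~\ref{coro:MembAlgebraicClosure}, applies both directions of Theorem~\ref{thm:cellularWitnesses} to $\bar{I}$, and descends with Theorem~\ref{thm:associatedPrimesFieldExtension}. Your write-up simply makes explicit the bookkeeping (shape of associated primes from Lemma~\ref{lemma:minimalPrimesOfCellular}, compatibility of the two extension correspondences) that the paper leaves implicit.
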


\begin{proof}
By Corollary~\ref{coro:MembAlgebraicClosure},
Theorem~\ref{thm:cellularWitnesses} can be applied to the extension
$\bar{I}$. Now the result follows from Theorem~\ref{thm:associatedPrimesFieldExtension}.
\end{proof}

The following two results give us characterizations of unmixedness for
cellular binomial ideals. The first one has already appeared
as~\cite[Proposition~2.4]{OS}.

\begin{corollary}
\label{coro:unmixedForCellular}
Let $I$ be a $\Delta$-cellular binomial ideal in $\kk[x]$. Then $I$ is
unmixed if and only if it has no embedded associated primes.
\end{corollary}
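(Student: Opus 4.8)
The plan is to reduce both implications to a single codimension computation, using the explicit description of the associated primes of $I$ from Corollary~\ref{coro:associatedPrimes} together with the equidimensionality of lattice ideals built into Theorem~\ref{thm:latticePrimaryDecomposition}. First I would record the codimension of a prime of the shape $\kk[x]\cdot P + \<x_i \mid i \in \bDelta\>$, where $P \subset \kk[\NN^\Delta]$ is prime. Since $\kk[x]/(\kk[x]\cdot P + \<x_i \mid i \in \bDelta\>) \cong \kk[\NN^\Delta]/P$, such a prime has codimension $\codim_{\kk[\NN^\Delta]}(P) + |\bDelta|$ in $\kk[x]$. By Corollary~\ref{coro:associatedPrimes}, every associated prime of $I$ has this shape with $P$ an associated prime of some lattice ideal $I(\tau) = (I:m) \cap \kk[\NN^\Delta]$, and by Theorem~\ref{thm:latticePrimaryDecomposition} every such $P$ has codimension $\rank(L_\tau)$. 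Hence every associated prime of $I$ has codimension $\rank(L_\tau) + |\bDelta|$. In particular, taking $m=1$ (so $\tau = \rho$) and invoking Lemma~\ref{lemma:minimalPrimesOfCellular}, the minimal primes of $I$ all have the single codimension $\rank(L_\rho) + |\bDelta|$.

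For the forward direction, suppose $I$ is unmixed. If $Q$ were an embedded (that is, non-minimal) associated prime, it would properly contain some minimal prime of $I$; since proper inclusion of prime ideals in the Noetherian ring $\kk[x]$ strictly raises codimension, we would get $\codim(Q) > \rank(L_\rho) + |\bDelta|$. But the minimal prime $Q$ contains is itself associated, so two associated primes of distinct codimension would coexist, contradicting unmixedness. Thus $I$ has no embedded associated primes. This half is essentially formal commutative algebra and should be routine.

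For the reverse direction, suppose $I$ has no embedded associated primes, so that every associated prime is minimal. By the computation above, each minimal prime of $I$ has codimension exactly $\rank(L_\rho) + |\bDelta|$, so all associated primes share this codimension and $I$ is unmixed by definition. The only ingredient that genuinely uses the binomial structure is the equidimensionality of the minimal primes of $I$, and I expect this to be the crux of the argument; it is supplied by the lattice primary decomposition (Theorem~\ref{thm:latticePrimaryDecomposition}, which guarantees that all associated primes of $I(\rho)$ are minimal of codimension $\rank(L_\rho)$) via the reduction to lattice ideals in Lemma~\ref{lemma:minimalPrimesOfCellular}. Everything else is bookkeeping with the codimension formula.
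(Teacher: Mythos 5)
Your proof is correct and follows essentially the same route as the paper: both arguments reduce to the fact that the minimal primes of $I$ are equidimensional, via Lemma~\ref{lemma:minimalPrimesOfCellular} and the equidimensionality of lattice ideals from Theorem~\ref{thm:latticePrimaryDecomposition}, with the forward direction being the standard observation that an embedded prime properly contains a minimal one and hence has strictly larger codimension. You spell out the codimension bookkeeping that the paper's one-line proof leaves implicit, and you route the general-field case through Corollary~\ref{coro:associatedPrimes} rather than the explicit reduction via Theorem~\ref{thm:associatedPrimesFieldExtension} and Corollary~\ref{coro:MembAlgebraicClosure}, but the substance is the same.
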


\begin{proof}
By Theorem~\ref{thm:associatedPrimesFieldExtension} and 
Corollary~\ref{coro:MembAlgebraicClosure}, we
may assume that $\kk$ is algebraically closed. Now the result follows
from Lemma~\ref{lemma:minimalPrimesOfCellular}, since the minimal
primes of $I$ have the same codimension.
\end{proof}

\begin{corollary} 
\label{coro:zeroMemb} 
Let $I$ be a $\Delta$-cellular binomial ideal in $\kk[x]$. Then $I$ is
unmixed if and only if $\Memb(I)=0$.
\end{corollary}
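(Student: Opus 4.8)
The plan is to show that each of the two conditions is equivalent to the single combinatorial statement that $I$ admits an embedded associated lattice. By the very definition of $\Memb(I)$ (Definition~\ref{def:embedded+Memb}), the ideal $\Memb(I)$ is nonzero if and only if $I$ has at least one embedded associated lattice. On the other side, Corollary~\ref{coro:unmixedForCellular} says that $I$ fails to be unmixed exactly when $I$ has an embedded associated prime. So it suffices to prove the bridging statement: $I$ has an embedded associated prime if and only if $I$ has an embedded associated lattice.

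First I would reduce to the case $\kk=\bar\kk$. By Corollary~\ref{coro:MembAlgebraicClosure} the extension of $\Memb(I)$ to $\bar\kk[x]$ equals $\Memb(\bar I)$, so $\Memb(I)=0$ if and only if $\Memb(\bar I)=0$; and by Theorem~\ref{thm:associatedPrimesFieldExtension}(4), $I$ is unmixed if and only if $\bar I$ is. Hence we may assume $\kk$ algebraically closed and invoke Theorem~\ref{thm:cellularWitnesses} freely.

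Next comes the codimension bookkeeping that drives the bridging statement. Let $(L_\rho,\rho)$ be the partial character with $I\cap\kk[\NN^\Delta]=I(\rho)$. Since $\kk[x]/\big(\kk[x]\cdot P+\<x_i\mid i\in\bDelta\>\big)\cong\kk[\NN^\Delta]/P$ for any $P\subseteq\kk[\NN^\Delta]$, the contribution of $\<x_i\mid i\in\bDelta\>$ to the codimension is the constant $|\bDelta|$. By Lemma~\ref{lemma:minimalPrimesOfCellular} together with Theorem~\ref{thm:latticePrimaryDecomposition}, every minimal prime of $I$ then has codimension $\rank(L_\rho)+|\bDelta|$. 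By Corollary~\ref{coro:associatedPrimes}, every associated prime of $I$ has the form $\kk[x]\cdot P+\<x_i\mid i\in\bDelta\>$, where $P$ is an associated prime of a lattice ideal $(I:m)\cap\kk[\NN^\Delta]=I(\tau)$ coming from a witness monomial $m\in\kk[\NN^\bDelta]$; all associated primes of $I(\tau)$ share codimension $\rank(L_\tau)$, so this prime has codimension $\rank(L_\tau)+|\bDelta|$. Therefore it is embedded (codimension strictly larger than that of the minimal primes) if and only if $\rank(L_\tau)>\rank(L_\rho)$, which is precisely the condition that $L_\tau$ be an embedded associated lattice. Running this equivalence in both directions (using the converse half of Theorem~\ref{thm:cellularWitnesses} to produce an embedded associated prime from an embedded associated lattice) establishes the bridging statement, and combining with the first paragraph yields $I$ unmixed $\iff\Memb(I)=0$.

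The step I expect to be the main obstacle is the rank/codimension accounting: one must check that passing from $(L_\tau,\tau)$ to its saturation $(L_\chi,\chi)$, as in Theorem~\ref{thm:cellularWitnesses}, does not change the rank (so $\rank(L_\chi)=\rank(L_\tau)$ and the prime's codimension is computed correctly), and that the offset $|\bDelta|$ is genuinely the same for minimal and embedded primes, so that \emph{embedded} at the level of primes matches \emph{embedded} at the level of lattices. The remaining points — that a witness monomial $m$ defining an embedded lattice is a nonzero element of $\kk[\NN^\bDelta]\minus I$ and hence contributes a nonzero generator to $\Memb(I)$ — are immediate from the definition of witness monomial and require no separate argument.
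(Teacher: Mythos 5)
Your proof is correct and follows essentially the same route as the paper's: the paper's own (two-line) argument also reduces the statement to the bridge ``$I$ has embedded associated primes iff it has embedded associated lattices,'' citing Lemma~\ref{lemma:minimalPrimesOfCellular} and Corollary~\ref{coro:associatedPrimes}, and then invokes Corollary~\ref{coro:unmixedForCellular}. You simply make explicit the codimension bookkeeping (the common offset $|\bDelta|$ and the rank-preservation under saturation) that the paper leaves implicit.
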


\begin{proof}
By Lemma~\ref{lemma:minimalPrimesOfCellular} and
Corollary~\ref{coro:associatedPrimes}, $I$ has embedded primes if and
only if it has embedded associated lattices. Now use Corollary~\ref{coro:unmixedForCellular}.
\end{proof}

We state an immediate consequence of Corollary~\ref{coro:zeroMemb}.

\begin{corollary} 
\label{coro:primaryCellular}
Let $I$ be a cellular binomial ideal in $\kk[x]$. If $I$ has only one minimal
associated prime and $\Memb(I) = 0$, then $I$ is primary. 
\qed
\end{corollary}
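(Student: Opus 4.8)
The goal is to prove Corollary~\ref{coro:primaryCellular}: a cellular binomial ideal $I$ with a single minimal associated prime and $\Memb(I) = 0$ is primary. The plan is to use the characterization of primality as the condition that $I$ has exactly one associated prime, together with the preceding results that translate the hypotheses into statements about associated primes.

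First I would invoke Corollary~\ref{coro:zeroMemb}: since $\Memb(I) = 0$, the ideal $I$ is unmixed, meaning all associated primes of $I$ have the same codimension. Equivalently, by Corollary~\ref{coro:unmixedForCellular}, $I$ has no embedded associated primes, so every associated prime of $I$ is a minimal associated prime. Thus the set of associated primes of $I$ coincides with the set of minimal associated primes of $I$. By hypothesis, $I$ has exactly one minimal associated prime, so $I$ has exactly one associated prime altogether. An ideal with a unique associated prime is primary, which is precisely the conclusion.

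The argument is essentially a short chain of implications assembled from results already established, so there is no substantial computational content and no serious obstacle. The one point requiring a little care is the logical glue: the hypotheses give information about \emph{minimal} associated primes ($I$ has only one) and about $\Memb(I)$ (it vanishes), and I must combine these via the \textbf{two} unmixedness corollaries to conclude that the \emph{total} list of associated primes has length one. Concretely, $\Memb(I)=0$ forces unmixedness (Corollary~\ref{coro:zeroMemb}), unmixedness forces the absence of embedded primes (Corollary~\ref{coro:unmixedForCellular}), and the absence of embedded primes collapses the associated primes to the minimal ones, of which there is exactly one by assumption. The only subtlety worth flagging is that ``primary $\Leftrightarrow$ unique associated prime'' is the standard commutative-algebra fact being used at the end; since the ring $\kk[x]$ is Noetherian, $I$ admits a primary decomposition, and having a single associated prime means that decomposition has a single primary component, namely $I$ itself, so $I$ is primary.

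Since this corollary is already flagged in the excerpt as ``an immediate consequence of Corollary~\ref{coro:zeroMemb},'' I would expect the authors' proof to be a one- or two-sentence assembly of exactly these citations, and the main work has already been done in establishing the two unmixedness corollaries that feed into it.
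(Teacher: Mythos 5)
Your proof is correct and follows exactly the route the paper intends: the corollary is stated in the paper as an immediate consequence of Corollary~\ref{coro:zeroMemb} (via Corollary~\ref{coro:unmixedForCellular}), which is precisely the chain you assemble, capped off with the standard fact that an ideal in a Noetherian ring with a unique associated prime is primary. No gaps.
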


We now turn our attention to the ideal $I+\Memb(I)$, and determine all
of its monomials.

\begin{proposition}
\label{propo:monomialsInI+Memb(I)}
Let $I$ be a cellular binomial ideal.
Any monomial in $I+\Memb(I)$ belongs to either $I$ or $\Memb(I)$.
\end{proposition}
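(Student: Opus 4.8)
The goal is to show that if a monomial $x^w$ lies in $I+\Memb(I)$, then in fact $x^w\in I$ or $x^w\in \Memb(I)$. The plan is to write $x^w = f + g$ where $f\in I$ and $g\in\Memb(I)$, and analyze this decomposition monomial-by-monomial using the binomial/monomial structure. Since $\Memb(I)$ is a monomial ideal (Definition~\ref{def:embedded+Memb}), $g$ is a $\kk$-linear combination of monomials, each divisible by some witness monomial generator of $\Memb(I)$; in particular every monomial appearing in $g$ already lies in $\Memb(I)$. So it suffices to understand the monomials appearing in $f=x^w-g$, which is an element of the binomial ideal $I$.

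First I would invoke the key structural fact about binomial ideals from~\cite{ES}: a polynomial lies in a binomial ideal $I$ if and only if, after grouping its terms according to the equivalence relation generated by the binomials of $I$ (two monomials are equivalent modulo $I$ when their difference, up to a nonzero scalar, lies in $I$), each equivalence class of terms sums to an element of $I$. Concretely, one uses that $I$ has a reduced Gr\"obner basis consisting of binomials and monomials, so reduction modulo $I$ sends each monomial either to $0$ (if it is in $I$) or to a scalar multiple of a distinguished normal-form monomial. I would apply this normal-form map to the identity $x^w = f+g$. Because $f\in I$, it reduces to $0$, so the normal form of $x^w$ equals the normal form of $g$.

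The crux is then a dichotomy on $x^w$ itself. If $x^w\in I$ we are done, so assume $x^w\notin I$; then $x^w$ has a nonzero normal form, namely (a scalar times) itself or an equivalent monomial. Matching normal forms, some monomial $x^a$ appearing in $g$ must be equivalent to $x^w$ modulo $I$, meaning $x^w - \mu x^a\in I$ for some scalar $\mu\neq 0$. Since $x^a\in\Memb(I)$ and $\Memb(I)$ is generated by monomials in $\kk[\NN^{\bDelta}]$, I want to conclude $x^w\in\Memb(I)$. The idea is that the witness-monomial structure is preserved under the equivalence: if $x^a$ is divisible by a witness monomial $m$ of an embedded associated lattice, and $x^w-\mu x^a\in I$, then $x^w$ lies in $(I:\text{(something)})$ in a way that exhibits it as a witness monomial (or a multiple of one) for the same embedded lattice. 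Here I expect to use Lemma~\ref{lemma:monomialsInMemb}: the binomial $x^w-\mu x^a$ (after factoring out the common monomial part and separating $\Delta$- from $\bDelta$-variables) provides precisely the data needed to certify membership in $\Memb(I)$, since its exponent difference avoids $\Sat(L_\rho)$ inherited from $x^a$'s membership.

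The main obstacle I anticipate is bookkeeping the interaction between the cellular variables (indices in $\Delta$) and the nilpotent ones (indices in $\bDelta$): the witness monomials live in $\kk[\NN^{\bDelta}]$, but the binomials certifying embeddedness live in $\kk[\NN^{\Delta}]$, and the relation $x^w-\mu x^a\in I$ mixes both. I would therefore split each exponent as a $\Delta$-part plus a $\bDelta$-part and track how cellularity (every $x_i$, $i\in\bDelta$, is nilpotent modulo $I$, while $\prod_{i\in\Delta}x_i$ is a nonzerodivisor) forces the $\bDelta$-parts of equivalent monomials to carry the witness data while the $\Delta$-parts carry the lattice data. Getting this separation clean enough to feed into Lemma~\ref{lemma:monomialsInMemb}, and ruling out the degenerate possibility that $x^w$ is equivalent only to monomials already in $I$ (which would place $x^w\in I$), is where the real work lies; the normal-form reduction is the tool that makes these cases exhaustive and mutually exclusive.
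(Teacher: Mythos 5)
The first half of your argument is sound: comparing normal forms with respect to a reduced Gr\"obner basis of $I$ (which consists of monomials and binomials) in the identity $x^w = f+g$ is a legitimate way to produce a monomial $x^a\in\Memb(I)$ with $x^w-\mu x^a\in I$ and $\mu\neq 0$; the paper reaches the same point by simply citing Lemma~\ref{lemma:binomial+monomial} (that is, \cite[Corollary~1.6.(b)]{ES}). The degenerate case you worry about also takes care of itself here: once $x^w\notin I$ is assumed, its normal form is nonzero, and monomials of $g$ that lie in $I$ reduce to $0$, so they cannot account for it. The genuine gap is in the step you yourself flag as ``where the real work lies.'' You propose to feed the binomial $x^w-\mu x^a$ itself into Lemma~\ref{lemma:monomialsInMemb} ``after factoring out the common monomial part,'' but that lemma demands a certificate of a very specific shape: a binomial $x^u-\lambda x^v$ supported entirely on the cellular variables, i.e.\ lying in $\kk[\NN^\Delta]$, with $u-v\notin\Sat(L_\rho)$ and $x^w(x^u-\lambda x^v)\in I$. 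The binomial $x^w-\mu x^a$ admits no such factorization in general: the $\bDelta$-parts of $w$ and $a$ can differ, so $w-a$ does not lie in $\ZZ^\Delta$, and ``its exponent difference avoids $\Sat(L_\rho)$'' is not a meaningful condition, let alone one inherited from $x^a$.

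The missing idea is a multiplication trick that transports the certificate from $x^a$ to $x^w$ rather than trying to extract one from $x^w-\mu x^a$. Since $x^a\in\Memb(I)\setminus I$, the forward direction of Lemma~\ref{lemma:monomialsInMemb} (applied to the generator of $\Memb(I)$ dividing $x^a$) yields a binomial $b=x^u-\kappa x^{u'}\in\kk[\NN^\Delta]$ with $\kappa\neq 0$, $u-u'\notin\Sat(L_\rho)$, and $x^a b\in I$. Then
\[
x^w b \;=\; (x^w-\mu x^a)\,b \;+\; \mu\, x^a b \;\in\; I,
\]
because both summands lie in $I$. This is exactly the certificate that the converse direction of Lemma~\ref{lemma:monomialsInMemb} requires for $x^w$, and it gives $x^w\in\Memb(I)$. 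This product computation is the crux of the paper's proof; without it (or an equivalent device) your argument does not close.
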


In order to prove this result, we need the following auxiliary lemma.

\begin{lemma}[{\cite[Corollary~1.6.(b)]{ES}}]
\label{lemma:binomial+monomial}
Let $I$ be a binomial ideal in $\kk[x]$, and let $m,m_1,\dots,m_s \in
\kk[x]$ be monomials. If $m \in I+\<m_1,\dots,m_s\>$, then $m \in
I+\<m_i\>$ for some $i$.
\qed
\end{lemma}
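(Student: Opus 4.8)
The plan is to reduce the membership question to the quotient ring $R = \kk[x]/I$ and to exploit the rigidity of monomials modulo a binomial ideal. I would begin by invoking the foundational result of Eisenbud and Sturmfels that a binomial ideal admits a reduced Gröbner basis consisting of binomials, with respect to any fixed term order $\prec$ (see~\cite[Corollary~1.2]{ES}). The key consequence I would extract is that the normal form of \emph{any} monomial modulo such a Gröbner basis is a scalar multiple of a single standard monomial, or zero: reducing a monomial by a binomial $x^a - \lambda x^b$ replaces it by $\lambda$ times another monomial, so this shape is preserved throughout the reduction and in the limit. Writing $\bar{\mu} \in R$ for the image of a monomial $\mu$, this says that each $\bar{\mu}$ is either $0$ or a nonzero scalar times a standard monomial, where the standard monomials (those not in the leading-term ideal of $I$) form a $\kk$-basis of $R$.

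Next I would analyze the image $\bar{J}$ of $J \defeq I + \langle m_1,\dots,m_s\rangle$ in $R$, which is the $R$-ideal generated by $\bar{m}_1,\dots,\bar{m}_s$. As a $\kk$-vector space, $\bar{J} = \sum_i R\,\bar{m}_i$ is spanned by the elements $\bar{t}\,\bar{m}_i$ as $t$ ranges over standard monomials and $i$ over $1,\dots,s$. Since $\bar{t}\,\bar{m}_i$ is the normal form of the monomial $t\,m_i$, it is again a scalar times a standard monomial (or zero), by the rigidity observed above. Hence $\bar{J}$ is spanned by standard monomials; and because the standard monomials are $\kk$-linearly independent in $R$, a standard monomial lies in $\bar{J}$ if and only if it coincides, up to a nonzero scalar, with one of the spanning elements $\bar{t}\,\bar{m}_i$.

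Finally I would conclude as follows. If $\bar{m} = 0$ then $m \in I \subseteq I + \langle m_1\rangle$ and we are done; otherwise $\bar{m}$ is a nonzero scalar times a standard monomial $s$. The hypothesis $m \in J$ gives $\bar{m} \in \bar{J}$, hence $s \in \bar{J}$, so by the previous paragraph $s$ agrees, up to a nonzero scalar, with $\bar{t}\,\bar{m}_i$ for some standard monomial $t$ and some index $i$. Rescaling, $\bar{m}$ is an $R$-multiple of $\bar{m}_i$, that is $\bar{m} \in (\bar{m}_i)$, which is exactly the assertion $m \in I + \langle m_i\rangle$. I expect the only real subtlety—and a point to state carefully rather than the main difficulty—to be the bookkeeping establishing that images of monomials are scalar multiples of standard monomials and that this property is closed under the multiplication of $R$; once this normal-form rigidity is in place, the linear independence of standard monomials makes the ``a single generator suffices'' conclusion essentially immediate.
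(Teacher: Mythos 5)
Your proof is correct. The paper offers no argument of its own for this lemma---it is imported verbatim from Eisenbud--Sturmfels as \cite[Corollary~1.6.(b)]{ES}---and your route via binomial Gr\"obner bases and normal-form rigidity (the image in $\kk[x]/I$ of any monomial is a scalar times a standard monomial, so a monomial lying in the span of the elements $\bar{t}\,\bar{m}_i$ must be proportional to one of them by linear independence of standard monomials) is essentially the same argument that Eisenbud and Sturmfels use to prove the cited result, so there is nothing substantive to compare beyond the harmless notational clash of $s$ denoting both the number of monomials and your standard monomial.
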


\begin{proof}[Proof of Proposition~\ref{propo:monomialsInI+Memb(I)}]
Let $I$ be a $\Delta$-cellular binomial ideal.
Let $x^{\mu} \in I + \Memb(I)$, and assume that $x^{\mu} \notin I$. 
By Lemma~\ref{lemma:binomial+monomial}, there exists a generator $m$ of
$\Memb(I)$ and a nonzero $\lambda \in \kk$ such that $x^{\mu} -
\lambda m \in I$. We may assume that $m \in \kk[\NN^\bDelta]$.

Since $x^\mu \notin I$, $m \notin I$.
Then, by Lemma~\ref{lemma:monomialsInMemb}, $m \in (\Memb(I) \cap
\kk[\NN^\bDelta]) \minus I$ implies that
there exists a binomial $x^u-\kappa x^{u'} \in \kk[\NN^\Delta]$ such
that $\kappa \neq 0$, $u-u' \notin \Sat(L_\rho)$, and $m(x^u-\kappa
x^{u'} ) \in I$.
Consider the product
\[
(x^\mu  - \lambda m)(x^u-\kappa x^{u'}) = x^\mu  (x^u-\kappa
x^{u'}) -\lambda m(x^u-\kappa x^{u'}).
\]
Since $x^\mu - \lambda x^v$ and $m (x^u-\kappa x^{u'})$ belong to $I$,
we have $x^\mu (x^u-\kappa x^{u'}) \in I$. 
Applying Lemma~\ref{lemma:monomialsInMemb} again, we see that $x^\mu \in
\Memb(I)$, which concludes the proof.
\end{proof}

The following is one of our core technical results.

\begin{theorem} 
\label{thm:preliminaryToHull} 
Let $I$ be a $\Delta$-cellular binomial ideal in $\kk[x]$, and let $(L_\rho,\rho)$
be a partial character on $\ZZ^\Delta$ such that $I \cap
\kk[\NN^\Delta] = I(\rho)$. Then the binomial ideal $I + \Memb(I)$ is
$\Delta$-cellular, and we have 
\[
\Memb(I+\Memb(I)) = 0 \, ; \quad \text{and} \quad
(I + \Memb(I)) \cap \kk[\NN^\Delta] = I(\rho).
\]
Consequently, all associated primes of $I + \Memb(I)$ are minimal, 
and coincide with the minimal associated primes of $I$.
\end{theorem}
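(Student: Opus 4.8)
The plan is to isolate a single structural fact about binomials lying in a sum of a binomial ideal and a monomial ideal, and then deduce all the assertions (including the final statement about associated primes) from it. Write $J = I + \Memb(I)$; since $I$ is binomial and $\Memb(I)$ is generated by monomials, $J$ is binomial. The engine of the proof is the following claim, which I would establish first: if $x^\alpha - \lambda x^\beta$ is a binomial with $\lambda \neq 0$ such that $x^\alpha - \lambda x^\beta \in J$ but neither $x^\alpha$ nor $x^\beta$ lies in $J$, then in fact $x^\alpha - \lambda x^\beta \in I$. To see this I would pass to $A = \kk[x]/I$, whose monomial images span it and split into classes under the relation $x^c \sim x^d \iff x^c - \mu x^d \in I$ for some $\mu \in \kk^*$; this gives a basis $\{e_C\}$ of $A$ indexed by nonzero classes, and $\Memb(I)A$ is the coordinate subspace spanned by those $e_C$ whose class meets $\Memb(I)$. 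The hypotheses say precisely that $e_{[\alpha]}$ and $e_{[\beta]}$ are not among those coordinates, while $\bar{x^\alpha} - \lambda \bar{x^\beta} \in \Memb(I)A$; comparing coordinates forces $[\alpha] = [\beta]$ and $\lambda = \mu_\alpha/\mu_\beta$, which is exactly $x^\alpha - \lambda x^\beta \in I$. I expect this coordinate-subspace argument to be the main obstacle, mostly in making the monomial-basis description of $\kk[x]/I$ precise; everything afterwards is bookkeeping.

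For cellularity of $J$: when $i \in \bDelta$ the variable $x_i$ is nilpotent modulo $I$, hence modulo $J$. For $i \in \Delta$ I would show $x_i$ is a nonzerodivisor by proving $(J : x_i) = J$. Since $(J:x_i)$ is binomial by Lemma~\ref{lemma:idealQuotient}, it suffices to check its monomial and binomial generators. A monomial generator $x^a$ with $x^{a+e_i}\in J$ lies in $I$ or in $\Memb(I)$ by Proposition~\ref{propo:monomialsInI+Memb(I)}; in the first case the nonzerodivisor property of $x_i$ modulo $I$ gives $x^a \in I$, and in the second case $x_i$ does not appear in any generator of $\Memb(I)\subseteq\kk[\NN^\bDelta]$, so $x^a \in \Memb(I)$. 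A binomial generator $x^a - \lambda x^b$ with $\lambda \neq 0$ and $x^{a+e_i} - \lambda x^{b+e_i}\in J$ is handled by the structural claim (if $x^{a+e_i}\in J$ both monomials are, reducing to the previous case; otherwise the claim puts $x^{a+e_i}-\lambda x^{b+e_i}\in I$), followed again by the nonzerodivisor property modulo $I$. Thus $J$ is $\Delta$-cellular.

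For the intersection: by cellularity and Lemma~\ref{lemma:latticeIdealOfCellular}, $J \cap \kk[\NN^\Delta] = I(\sigma)$ for some partial character $(L_\sigma,\sigma)$, and $I(\rho) = I\cap\kk[\NN^\Delta]\subseteq I(\sigma)$ gives $L_\rho \subseteq L_\sigma$ with $\sigma|_{L_\rho}=\rho$. For each $w \in L_\sigma$ the binomial $x^{w_+}-\sigma(w)x^{w_-}$ lies in $J$, and neither $x^{w_+}$ nor $x^{w_-}$ lies in $J$: as monomials of $\kk[\NN^\Delta]$ they cannot lie in $\Memb(I)$ (no generator divides them unless it is $1\notin\Memb(I)$) nor in $I$ (since $I(\rho)$ has no monomials), using Proposition~\ref{propo:monomialsInI+Memb(I)}. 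The structural claim then gives $x^{w_+}-\sigma(w)x^{w_-}\in I\cap\kk[\NN^\Delta]=I(\rho)$, so $w \in L_\rho$; hence $L_\sigma = L_\rho$ and $J\cap\kk[\NN^\Delta]=I(\rho)$.

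Finally, for the vanishing of $\Memb(J)$ and the associated primes: suppose $\Memb(J)\neq 0$. Since $J \cap \kk[\NN^\Delta] = I(\rho)$, a witness monomial $m \in \kk[\NN^\bDelta]\minus J$ for an embedded lattice of $J$ yields, by Lemma~\ref{lemma:monomialsInMemb}, a binomial $x^u-\lambda x^v \in \kk[\NN^\Delta]$ with $\lambda\neq 0$, $u-v\notin\Sat(L_\rho)$, $m(x^u-\lambda x^v)\in J$, and $mx^u,mx^v\notin J$. The structural claim applied to $mx^u-\lambda\,mx^v$ places it in $I$, and then the converse direction of Lemma~\ref{lemma:monomialsInMemb} shows $m \in \Memb(I)\subseteq J$, contradicting $m\notin J$. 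Hence $\Memb(J)=0$. Since $J$ is $\Delta$-cellular with $\Memb(J)=0$, Corollaries~\ref{coro:zeroMemb} and~\ref{coro:unmixedForCellular} show $J$ has no embedded associated primes, so all its associated primes are minimal; and because $J\cap\kk[\NN^\Delta]=I(\rho)=I\cap\kk[\NN^\Delta]$, Lemma~\ref{lemma:minimalPrimesOfCellular} identifies the minimal primes of $J$ with those of $I$.
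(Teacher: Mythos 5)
Your argument is correct and follows essentially the same route as the paper's proof: the same case analysis on monomials and binomials of $(J:x_i)$ for cellularity, the same use of Proposition~\ref{propo:monomialsInI+Memb(I)} and Lemma~\ref{lemma:monomialsInMemb}, and the same appeal to Corollaries~\ref{coro:zeroMemb} and~\ref{coro:unmixedForCellular} and Lemma~\ref{lemma:minimalPrimesOfCellular} at the end. The only real difference is that your ``structural claim'' is precisely the binomial case of Proposition~\ref{propo:binomial+monomial} (quoted from Eisenbud--Sturmfels), which the paper invokes directly rather than re-deriving via the monomial-class basis of $\kk[x]/I$ as you sketch.
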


In the proof of Theorem~\ref{thm:preliminaryToHull}, the
following result will be useful.

\begin{proposition}[{\cite[Proposition~1.10]{ES}}]
\label{propo:binomial+monomial}
Let $I$ be a binomial ideal and $M$ a monomial ideal in
$\kk[x]$. If $f \in I + M$ and $f'$ is the sum of the
terms of $f$ that are not individually contained in $I + M$, then $f'
\in I$. \qed
\end{proposition}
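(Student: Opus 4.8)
The plan is to reduce the statement to a single membership criterion for the binomial ideal $I$ and then to check that enlarging $I$ to $I+M$ respects that criterion. Write $J \defeq I+M$; since a monomial ideal is in particular binomial, $J$ is itself a binomial ideal. As $\kk$-vector spaces one has $M = \operatorname{span}_\kk\{x^b \mid x^b \in M\}$, so any $f \in J$ decomposes as $f = g + h$ with $g \in I$ and $h$ a $\kk$-linear combination of monomials lying in $M$. Let $\flat$ denote the $\kk$-linear projection of $\kk[x]$ onto $\operatorname{span}_\kk\{x^a \mid x^a \notin J\}$, i.e. the operator that deletes every term whose monomial lies in $J$. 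By definition $f' = f^\flat$; moreover every monomial of $h$ lies in $M \subseteq J$, so $h^\flat = 0$, and hence by linearity $f' = f^\flat = g^\flat + h^\flat = g^\flat$. Thus it suffices to prove that $g^\flat \in I$ whenever $g \in I$, and the possible cancellations in the sum $g+h$ are harmless precisely because $\flat$ is linear.

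The crux is a membership criterion coming from the theory of binomial ideals \cite{ES}: for a fixed term order, reduction to normal form modulo $I$ sends each monomial $x^a$ either to $0$ (exactly when $x^a \in I$) or to a nonzero scalar multiple $c_a\, x^{s(a)}$ of a single standard monomial. Extending by linearity yields a projection $N$ with $\ker N = I$, so for $g = \sum_a \mu_a x^a$ one has $g \in I$ if and only if $\sum_{a \,:\, s(a)=s} \mu_a c_a = 0$ for every standard monomial $x^s$. This partitions the monomials outside $I$ into classes (the fibers of $s$), where $x^a$ and $x^b$ lie in the same class exactly when $x^a - \lambda x^b \in I$ for some $\lambda \in \kk^*$. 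The key compatibility observation I would isolate is that, because $I \subseteq J$, each such class lies \emph{entirely} inside $J$ or entirely outside $J$: if $x^a - \lambda x^b \in I \subseteq J$ with $\lambda \neq 0$ and $x^a \in J$, then $x^b = \lambda^{-1}\bigl(x^a - (x^a - \lambda x^b)\bigr) \in J$; and every monomial of $I$ lies in $J$ trivially.

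With these in hand I would finish by applying the criterion to $g^\flat = \sum_{x^a \notin J} \mu_a x^a$. For each standard monomial $x^s$, the monomials $x^a$ outside $I$ with $s(a) = s$ form a single class, hence are either all in $J$ or all outside $J$. In the first case $g^\flat$ retains none of them and the $x^s$-coefficient of $N(g^\flat)$ is the empty sum; in the second case $g^\flat$ retains all of them and that coefficient equals the full sum $\sum_{s(a)=s}\mu_a c_a$, which vanishes since $g \in I$. Either way $N(g^\flat)=0$, so $g^\flat \in I$ and therefore $f' = g^\flat \in I$. The main obstacle is conceptual rather than computational: one must set up the exact normal-form criterion for membership in $I$ (which rests on the existence of binomial reduced Gröbner bases) and then recognize that the single inclusion $I \subseteq J$ forces each $I$-class to be uniform with respect to membership in $J$ — the inclusion $M \subseteq J$ plays no further role beyond killing $h^\flat$.
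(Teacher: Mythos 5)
Your proof is correct. Note, however, that the paper does not prove this statement at all: it is imported verbatim from Eisenbud--Sturmfels (hence the citation and the immediate \qed), so the only proof to compare against is the one in \cite{ES}, which also rests on the Gr\"obner-theoretic structure of binomial ideals. Your argument uses exactly that foundation --- the reduced Gr\"obner basis of a binomial ideal consists of binomials, so the normal-form map $N$ is a $\kk$-linear projection with kernel $I$ taking each monomial to $0$ or to a single nonzero term --- but packages it differently: you partition the monomials outside $I$ into fibers of the standard-monomial map $s$, observe that each fiber is characterized by the relation $x^a - \lambda x^b \in I$, $\lambda \in \kk^*$, deduce from $I \subseteq J = I+M$ alone that each fiber lies wholly inside or wholly outside $J$, and conclude that the truncation $g^\flat$ deletes whole fibers at a time, so the vanishing of the per-fiber coefficient sums (equivalent to $g \in I$) persists after truncation. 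The two reductions at the start are also handled correctly: linearity of the truncation operator disposes of cancellations in $f = g + h$, and membership of a term in the ideal $J$ is correctly identified with membership of its monomial. The only point to make explicit in a written version is the standard fact you lean on --- that normal form modulo a reduced Gr\"obner basis of a binomial ideal sends terms to terms or to zero, with kernel exactly $I$ --- which is \cite[Proposition~1.1]{ES} together with uniqueness of normal forms; with that citation in place your argument is a complete, self-contained substitute for the reference.
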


\begin{proof}[Proof of Theorem~\ref{thm:preliminaryToHull}]
To see that $I+\Memb(I)$ is $\Delta$-cellular, it is enough to show
that, for $j\in \Delta$, $((I + \Memb(I)) : x_{j}) = I + \Memb(I)$. This
follows if we show that all monomials and binomials 
contained in $((I + \Memb(I)) : x_{j})$ also belong to $I+\Memb(I)$.

%

Let $x^{\mu}$ be a monomial in $((I + \Memb(I)) : x_{j})$.  Then
$x_jx^\mu \in I + \Memb(I)$. By
Proposition~\ref{propo:monomialsInI+Memb(I)}, $x_j x^\mu$ belongs to
$I$ or to $\Memb(I)$.
In the first case, $x^\mu \in
I$ since $j\in \Delta$.
In the second
case, $x^\mu \in \Memb(I)$, since $\Memb(I)$ is generated by monomials
in $\kk[\NN^\bDelta]$. Thus $x^\mu \in I+\Memb(I)$.

Now let $x^u-\lambda x^v \in ((I+\Memb(I)):x_j)$, so that $x_j(x^u-\lambda x^v) \in I+\Memb(I)$,
where $\lambda \neq
0$; we wish to show that 
$x^u-\lambda x^v \in I + \Memb(I)$. By the previous argument, we may
assume $x_jx^u, x_jx^v \notin I + \Memb(I)$. We apply
Proposition~\ref{propo:binomial+monomial} to see that $x_j(x^u-\lambda
x^v) \in I$, and use $j \in \Delta$ to conclude $x^u-\lambda x^v \in I
+ \Memb(I)$. This finishes the proof that $I+\Memb(I)$ is $\Delta$-cellular.

%

Let us verify that $(I+\Memb(I)) \cap \kk[\NN^\Delta] =
I(\rho)$. The inclusion $\supseteq$ is clear. Since $I+\Memb(I)$ is
$\Delta$-cellular, 
$(I+\Memb(I)) \cap \kk[\NN^\Delta]$ is a binomial ideal that contains
no monomials. Therefore, the reverse inclusion follows if we show that
any binomial $b \in (I+\Memb(I)) \cap 
\kk[\NN^{\Delta}]$ belongs also to $I(\rho)$. Since $b \in
\kk[\NN^\Delta]$, the monomials of $b$ do not individually belong to
$\Memb(I)$, and by Proposition~\ref{propo:binomial+monomial}, $b \in I
\cap \kk[\NN^\Delta] = I(\rho)$, as we wanted.

%

Next we show that $\Memb(I+\Memb(I)) = 0$. 

By contradiction, let $x^{\mu} \in \Memb(I + \Memb(I)) \cap
\kk[\NN^{\bDelta}]$. Then $x^\mu \notin I+\Memb(I)$. Use
Lemma~\ref{lemma:monomialsInMemb} to find a binomial $x^u-\lambda x^v
\in \kk[\NN^\Delta]$
such that $\lambda \neq 0$, $u-v \notin \Sat(L_\rho)$, and
$x^\mu(x^u-\lambda x^v) \in I + \Memb(I)$. (Here we use the fact that
$(I+\Memb(I))\cap\kk[\NN^\Delta] = I(\rho)$.)

We may assume that $x^\mu x^u, x^\mu x^v \notin I+\Memb(I)$, so by
Proposition~\ref{propo:binomial+monomial}, $x^\mu(x^u-\lambda x^v) \in
I$. This implies that $x^\mu \in \Memb(I)$
by Lemma~\ref{lemma:monomialsInMemb}, a contradiction.


In order to prove that all associated primes of $I + \Memb(I)$ are
minimal we use Corollaries~\ref{coro:zeroMemb}
and~\ref{coro:unmixedForCellular}. By
Lemma~\ref{lemma:minimalPrimesOfCellular}, the minimal
primes over $I$ coincide with the minimal primes over $I+\Memb(I)$.
\end{proof}

The following result is a consequence of the last assertion of
Theorem~\ref{thm:preliminaryToHull} (see also Corollary~\ref{coro:primaryCellular}).

\begin{corollary}
\label{coro:computationallyUseful}
Let $I$ be a cellular binomial ideal in $\kk[x]$ with a unique minimal prime $P$. Then
$I + \Memb(I)$ is $P$-primary. \qed
\end{corollary}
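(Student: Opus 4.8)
The plan is to deduce this corollary directly from Theorem~\ref{thm:preliminaryToHull} together with Corollary~\ref{coro:primaryCellular}, so that essentially all the substantive work has already been carried out. Set $J \defeq I + \Memb(I)$. First I would apply Theorem~\ref{thm:preliminaryToHull} to $I$: this tells us that $J$ is again $\Delta$-cellular, that $\Memb(J) = 0$, and, crucially, that every associated prime of $J$ is minimal and that the minimal associated primes of $J$ coincide exactly with the minimal associated primes of $I$.

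Next I would use the hypothesis that $I$ has a unique minimal prime $P$. Since the minimal primes of $J$ coincide with those of $I$, the cellular binomial ideal $J$ also has exactly one minimal associated prime, namely $P$. At this point the hypotheses of Corollary~\ref{coro:primaryCellular} are satisfied for $J$: it is a cellular binomial ideal, it has a single minimal associated prime, and $\Memb(J) = 0$. Therefore $J = I + \Memb(I)$ is primary.

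Finally, to identify its associated prime, recall that a primary ideal has a unique associated prime; by Theorem~\ref{thm:preliminaryToHull} all associated primes of $J$ are minimal, and there is only one such, namely $P$. Hence $J = I + \Memb(I)$ is $P$-primary, as claimed.

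As for difficulty, there is essentially no obstacle here: the genuine content lives in Theorem~\ref{thm:preliminaryToHull} (the stability of cellularity under adding $\Memb(I)$, the vanishing $\Memb(I + \Memb(I)) = 0$, and the control of the associated primes) and in Corollary~\ref{coro:primaryCellular}. The only point that requires a moment's care is the last step---matching the unique associated prime of the primary ideal $J$ with the prescribed prime $P$---but this is immediate from the facts that all associated primes of $J$ are minimal and that there is exactly one of them.
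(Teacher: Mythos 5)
Your proof is correct and follows exactly the route the paper intends: the paper states this corollary as an immediate consequence of the last assertion of Theorem~\ref{thm:preliminaryToHull} together with Corollary~\ref{coro:primaryCellular}, which is precisely your argument. The identification of the unique associated prime of $I+\Memb(I)$ with $P$ is handled correctly.
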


\section{The hull of a cellular binomial ideal}
\label{sec:hull}

In this section, we compute the minimal primary components of a
cellular binomial ideal. This information is then used to give an
explicit formula for the hull of such an ideal. While the description
of the primary components requires the field $\kk$ to be algebraically
closed, the hull computation needs no assumptions on the base field.

\begin{theorem} 
\label{thm:cellularPrimDecV1}
Let $I$ be a $\Delta$-cellular binomial ideal in $\kk[x]$, where $\kk$
is algebraically closed and $\chr(\kk)= p \geq 0$. Let $(L_\rho,\rho)$ be the
partial character on $\ZZ^\Delta$ such that $I\cap \kk[\NN^\Delta] =
I(\rho)$. Consider $g$, $\rho_1,\dots,\rho_g$ and $\chi_1,\dots,\chi_g$ as
in Theorem~\ref{thm:latticePrimaryDecomposition}, so that
$I(\rho_\ell)$ is the $I(\chi_\ell)$-primary component of $I(\rho)$
for $\ell = 1,\dots,g$. By Lemma~\ref{lemma:minimalPrimesOfCellular},
the minimal associated primes of $I$ are 
\[
P_\ell = \kk[x] \cdot I(\chi_\ell) + \< x_i \mid i \in \bDelta\> \quad
\text{for} \; \ell=1,\dots,g. 
\]
For $\ell=1,\dots, g$, the $P_\ell$-primary component of $I$ is
\begin{equation}
\label{eqn:cellularPrimDecV1}
\bigg( (I + I(\rho_\ell)) : (\prod_{i \in \Delta} x_i)^\infty \bigg) + \Memb \bigg( (I + I(\rho_\ell)) : (\prod_{i \in \Delta} x_i)^\infty \bigg).
\end{equation}
\end{theorem}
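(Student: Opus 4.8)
Write $J_\ell := \big((I + I(\rho_\ell)) : (\prod_{i\in\Delta}x_i)^\infty\big)$ for the ideal appearing in~\eqref{eqn:cellularPrimDecV1}, and let $C_\ell$ denote the $P_\ell$-primary component of $I$. Since $P_\ell$ is a minimal prime, $C_\ell = I_{P_\ell}\cap\kk[x]$ is well defined and is the smallest $P_\ell$-primary ideal containing $I$. The plan is to prove $J_\ell + \Memb(J_\ell) = C_\ell$ by a double inclusion, after first checking that $J_\ell$ is a $\Delta$-cellular binomial ideal whose unique minimal prime is $P_\ell$.

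First I would record that $J_\ell$ is binomial (by Lemma~\ref{lemma:idealQuotient}, since $I + I(\rho_\ell)$ is binomial and we saturate by a monomial) and $\Delta$-cellular: saturating by $\prod_{i\in\Delta}x_i$ makes the cellular variables nonzerodivisors, while the variables $x_i$ with $i\in\bDelta$ remain nilpotent because $I\subseteq J_\ell$. The key preliminary estimate is that $J_\ell \subseteq C_\ell$. To see this, note that $C_\ell\cap\kk[\NN^\Delta]$ is $I(\chi_\ell)$-primary and contains $I\cap\kk[\NN^\Delta] = I(\rho)$, so it contains the $I(\chi_\ell)$-primary component of $I(\rho)$, which is $I(\rho_\ell)$ by Theorem~\ref{thm:latticePrimaryDecomposition}; hence $I + I(\rho_\ell)\subseteq C_\ell$. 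Since $\prod_{i\in\Delta}x_i\notin P_\ell$ and $C_\ell$ is $P_\ell$-primary, $\prod_{i\in\Delta}x_i$ is a nonzerodivisor modulo $C_\ell$, so $(C_\ell : (\prod_{i\in\Delta}x_i)^\infty) = C_\ell$ and therefore $J_\ell\subseteq C_\ell$. Writing $J_\ell\cap\kk[\NN^\Delta] = I(\sigma)$ (Lemma~\ref{lemma:latticeIdealOfCellular}), the inclusions $I(\rho_\ell)\subseteq I(\sigma)\subseteq C_\ell\cap\kk[\NN^\Delta]$ squeeze $\sqrt{I(\sigma)}$ between $\sqrt{I(\rho_\ell)}=I(\chi_\ell)$ and $\sqrt{C_\ell\cap\kk[\NN^\Delta]}=I(\chi_\ell)$, forcing $\Sat(L_\sigma) = L_{\chi_\ell}$; by Lemma~\ref{lemma:minimalPrimesOfCellular}, $J_\ell$ has the single minimal prime $P_\ell$, and Corollary~\ref{coro:computationallyUseful} then gives that $J_\ell + \Memb(J_\ell)$ is $P_\ell$-primary.

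The inclusion $C_\ell\subseteq J_\ell + \Memb(J_\ell)$ is now immediate: the right-hand side is a $P_\ell$-primary ideal containing $I$ (indeed $I\subseteq I + I(\rho_\ell)\subseteq J_\ell$), and $C_\ell$ is the smallest such ideal. For the reverse inclusion it suffices, since $J_\ell\subseteq C_\ell$, to show $\Memb(J_\ell)\subseteq C_\ell$. I expect this to be the main obstacle. Let $m$ be a minimal monomial generator of $\Memb(J_\ell)$, lying in $\kk[\NN^\bDelta]$. By Lemma~\ref{lemma:monomialsInMemb} applied to $J_\ell$ there is a binomial $x^u - \lambda x^v\in\kk[\NN^\Delta]$ with $\lambda\neq 0$, $u - v\notin\Sat(L_\sigma) = L_{\chi_\ell}$, and $m(x^u - \lambda x^v)\in J_\ell\subseteq C_\ell$. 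Because $u - v\notin L_{\chi_\ell}$, the images of $x^u$ and $x^v$ in the domain $\kk[\NN^\Delta]/I(\chi_\ell)$ are not proportional, so $x^u - \lambda x^v\notin I(\chi_\ell) = P_\ell\cap\kk[\NN^\Delta]$, whence $x^u - \lambda x^v\notin P_\ell$. Since $C_\ell$ is $P_\ell$-primary, this binomial is a nonzerodivisor modulo $C_\ell$, and $m(x^u - \lambda x^v)\in C_\ell$ forces $m\in C_\ell$. Thus $\Memb(J_\ell)\subseteq C_\ell$, giving $J_\ell + \Memb(J_\ell)\subseteq C_\ell$ and completing the proof that the $P_\ell$-primary component of $I$ is exactly~\eqref{eqn:cellularPrimDecV1}.
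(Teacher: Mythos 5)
Your proposal is correct and follows essentially the same route as the paper: show $J_\ell$ is $\Delta$-cellular with $J_\ell\cap\kk[\NN^\Delta]$ squeezed between $I(\rho_\ell)$ and an $I(\chi_\ell)$-primary ideal so that $J_\ell+\Memb(J_\ell)$ is $P_\ell$-primary by Corollary~\ref{coro:computationallyUseful}, then prove $J_\ell\subseteq Q_\ell$ and $\Memb(J_\ell)\subseteq Q_\ell$ via the witness binomial $x^u-\lambda x^v\notin P_\ell$, and conclude by minimality of the primary component of a minimal prime. Your phrasing in terms of nonzerodivisors modulo the $P_\ell$-primary ideal $C_\ell$ is just the localization argument of the paper in different clothing.
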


\begin{example}
\label{ex:needSaturation}
The ideals $I+I(\rho_\ell)$ in Theorem~\ref{thm:cellularPrimDecV1} need not be
cellular; this is the reason we saturate out the variables indexed by
$\Delta$ in~\eqref{eqn:cellularPrimDecV1}. We illustrate this
phenomenon in the following example. Let $I = \< x_1^2-x_2^2,
x_3(x_1-x_2),x_3^3\>$; this is a $\{1,2\}$-cellular
binomial ideal, with $\Memb(I)=0$, and  $I \cap \kk[x_1,x_2] =
\<x_1^2-x_2^2\>$. If $\chr(\kk)=2$, this lattice ideal is primary, as
$x_1^2-x_2^2 = (x_1-x_2)^2$, and therefore $I$ is also primary by
Lemma~\ref{lemma:primaryBinomialIdeal}.
If $\chr(\kk)\neq 2$, then $I \cap \kk[x_1,x_2] = \<x_1-x_2\> \cap
\<x_1+x_2\>$, and we consider:
\begin{align*}
J_1 & = ((I + \<x_1 - x_2 \>):(x_1x_2)^\infty) = \<x_1-x_2,x_3^3
\>; \\
J_2 & = ((I + \<x_1 + x_2 \>):(x_1x_2)^\infty) = \<x_1+x_2,x_3\>.
\end{align*}
The ideal $I+\<x_1+x_2\>$ contains the monomial $x_2x_3$ without
containing $x_3$, and so the variable $x_2$ is neither nilpotent nor a
nonzerodivisor modulo this ideal, whence $I+\<x_1+x_2\>$ is not
cellular.

In this case, $\Memb(J_1)=\Memb(J_2) = 0$, and (since $I$ has no
embedded primes by Corollaries~\ref{coro:unmixedForCellular}
and~\ref{coro:zeroMemb}) we see that $I = J_1 \cap J_2$ 
is an irredundant primary decomposition.

This kind of saturation can also introduce binomials. Let $I = \<
x_1^2-x_2^2, x_2x_3-x_1x_4, x_1x_3-x_2x_4, x_3^2-x_4^2,
x_3x_4^2,x_4^3\>$, an unmixed
$\{1,2\}$-cellular ideal (note $x_3^3 \in I$), and assume $\chr(\kk) \neq 2$. In this case,
the primary components of $I$ are
\begin{align*}
J_1 & = ((I + \<x_1 - x_2 \>):(x_1x_2)^\infty) = \<x_1-x_2,x_3-x_4,x_4^3\>; 
\\
J_2 & = ((I + \<x_1 + x_2 \>):(x_1x_2)^\infty) = \<x_1+x_2,x_3+x_4,x_4^3\>.
\end{align*}
Note that $I + \<x_1 - x_2 \>$ contains $x_2(x_3-x_4)$ but not
$x_3-x_4$, and $I + \<x_1 + x_2 \>$ contains $x_2(x_3+x_4)$ but not $x_3+x_4$.
\endrk
\end{example}

\begin{proof}[{Proof of Theorem~\ref{thm:cellularPrimDecV1}}] 
Fix $1 \leq \ell \leq g$ and set $J_\ell = \big( (I + I(\rho_\ell)) : (\prod_{i \in \Delta}
x_i)^\infty \big)$. By construction, the variables indexed by $\Delta$
are nonzerodivisors modulo $J_\ell$. Also, since $I \subseteq J_\ell$,
the variables indexed by $\bDelta$ are nilpotent modulo
$J_\ell$. Thus, $J_\ell$ is $\Delta$-cellular.

Note that $I(\rho_\ell) \subseteq J_\ell \cap \kk[\NN^\Delta]
\subseteq I(\chi_\ell)$. By
Theorem~\ref{thm:latticePrimaryDecomposition}, this implies $J_\ell
\cap \kk[\NN^\Delta]$ is $I(\chi_\ell)$-primary. 
Now we apply 
Lemma~\ref{lemma:minimalPrimesOfCellular} to see that $J_\ell$ has a unique
minimal prime $P_\ell$, and conclude $J_\ell+\Memb(J_\ell)$ is $P_\ell$-primary
by Corollary~\ref{coro:computationallyUseful}.

%

We know that $I(\rho_\ell)$ is the primary component of $I(\rho)$
corresponding to its (minimal) associated prime $I(\chi_\ell)$
(see Theorem~\ref{thm:latticePrimaryDecomposition}). This implies that
  $I(\rho_\ell)$ is the kernel of the localization map
  $\kk[\NN^\Delta]/I(\rho) \to
  (\kk[\NN^\Delta]/I(\rho))_{I(\chi_\ell)}$ that inverts the elements
  in $\kk[\NN^\Delta]/I(\rho)$ outside the prime ideal $(\kk[\NN^\Delta]/I(\rho))
  \cdot I(\chi_\ell)$ . Therefore $I(\rho_\ell)$ is
  contained in the $P_\ell$-primary component $Q_\ell$ of $I$, and thus
   $I+
   I(\rho_\ell) \subseteq Q_\ell$, so that $J_\ell \subseteq (Q_\ell :
   (\prod_{i \in \Delta} x_i)^\infty ) = Q_\ell$.

We claim that $\Memb(J_\ell) \subseteq Q_\ell$. Note that this is
enough to conclude that $J_\ell + \Memb(J_\ell) = Q_\ell$, as we
wanted, since a $P_\ell$-primary ideal contained in the kernel of the
localization homomorphism $\kk[x]/I \to (\kk[x]/I)_{P_\ell}$ must
equal this kernel by~\cite[Corollary~10.21]{AM}. 


Recall that $J_\ell \cap \kk[\NN^\Delta]=I(\rho_\ell)$, whose
corresponding lattice has saturation $\Sat(L_\rho)$.
Let $m \in \kk[\bDelta] \minus J_\ell$ be a (monomial) generator of
$\Memb(J_\ell)$, and let
$x^u - \lambda x^v \in \kk[\NN^\Delta]$ be the binomial from 
Lemma~\ref{lemma:monomialsInMemb} applied to $m$ and $J_\ell$.
Since $m(x^u-\lambda x^v) \in J_\ell$, this binomial maps to zero
under $\kk[x]/I \to (\kk[x]/I)_{P_\ell}$. Now note that $x^u-\lambda
x^v$ maps to 
a unit under $\kk[x]/I \to (\kk[x]/I)_{P_\ell}$, because $u-v \notin
\Sat(L_\rho)$ and therefore
$x^u-\lambda x^v \in \kk[\NN^\Delta] \minus I(\chi_\ell)$. We conclude
$m\in Q_\ell$, as we wanted.
%
\end{proof}

In the previous theorem, the monomial ideal added to obtain a primary
component depends on the associated prime. However, this should not be
the case. If we assume that $\chr(\kk)=0$ and use~\cite{DMM} to
compute primary components, the monomial ideal there depends only on
the lattice of the associated prime, and not really on the partial
character. (See also Remark~\ref{rmk:connectionToDMM}.)
Motivated by this evidence, we provide an improvement to Theorem~\ref{thm:cellularPrimDecV1}.

\begin{theorem}
\label{thm:cellularPrimDecV2}
Under the same assumptions and notation as in
Theorem~\ref{thm:cellularPrimDecV1},
the $P_\ell$-primary component of $I$ is
\[
\bigg( (I + I(\rho_\ell)) : (\prod_{i \in \Delta} x_i)^\infty \bigg) + \Memb(I).
\]
\end{theorem}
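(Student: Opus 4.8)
The plan is to deduce Theorem~\ref{thm:cellularPrimDecV2} from Theorem~\ref{thm:cellularPrimDecV1}. Fix $\ell$, write $s = \prod_{i\in\Delta} x_i$ and $J_\ell = ((I+I(\rho_\ell)):s^\infty)$, and let $Q_\ell$ be the $P_\ell$-primary component of $I$, so that Theorem~\ref{thm:cellularPrimDecV1} gives $Q_\ell = J_\ell + \Memb(J_\ell)$. It therefore suffices to prove $J_\ell + \Memb(I) = Q_\ell$. Throughout I will use that $\Sat(L_{\rho_\ell}) = \Sat(L_\rho)$ and that $J_\ell \cap \kk[\NN^\Delta] = I(\rho_\ell)$. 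The inclusion $J_\ell + \Memb(I) \subseteq Q_\ell$ is the easy one: if $m$ is a generator of $\Memb(I)$, then by Lemma~\ref{lemma:monomialsInMemb} there is a binomial $x^u - \lambda x^v \in \kk[\NN^\Delta]$ with $\lambda \neq 0$, $u - v \notin \Sat(L_\rho)$ and $m(x^u - \lambda x^v) \in I \subseteq J_\ell$; since the saturated lattice attached to $J_\ell$ is again $\Sat(L_\rho)$, Lemma~\ref{lemma:monomialsInMemb} applied to $J_\ell$ shows that either $m \in J_\ell$ or $m \in \Memb(J_\ell)$, so in all cases $m \in J_\ell + \Memb(J_\ell) = Q_\ell$.

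For the reverse inclusion I would show that $J_\ell + \Memb(I)$ is $P_\ell$-primary; then, since $I \subseteq J_\ell + \Memb(I) \subseteq Q_\ell$ and $Q_\ell$ is the $P_\ell$-primary component of $I$, the sandwiching argument via \cite[Corollary~10.21]{AM} used in the proof of Theorem~\ref{thm:cellularPrimDecV1} forces $J_\ell + \Memb(I) = Q_\ell$. To obtain primariness I would invoke Corollary~\ref{coro:primaryCellular}: the ideal $J_\ell + \Memb(I)$ has a unique minimal prime $P_\ell$ (since $J_\ell \subseteq J_\ell + \Memb(I) \subseteq Q_\ell$ sandwiches its radical between $P_\ell$ and $P_\ell$), so it remains to prove that it is $\Delta$-cellular and that $\Memb(J_\ell + \Memb(I)) = 0$. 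A useful reformulation makes the target concrete: applying Theorem~\ref{thm:cellularPrimDecV1} to the $\Delta$-cellular ideal $I + \Memb(I)$ (which by Theorem~\ref{thm:preliminaryToHull} has the same lattice $I(\rho)$ and the same minimal primes as $I$), its $P_\ell$-primary component equals $((I + \Memb(I) + I(\rho_\ell)):s^\infty)$, the $\Memb$-term disappearing because that component is primary, hence unmixed, so $\Memb$ of it is $0$ by Corollary~\ref{coro:zeroMemb}. As $I \subseteq I + \Memb(I) \subseteq Q_\ell$, this component is again $Q_\ell$, and the whole theorem reduces to the saturation identity $((I + \Memb(I) + I(\rho_\ell)):s^\infty) = J_\ell + \Memb(I)$.

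The engine for the hard inclusion is the monomial statement $\Memb(J_\ell) \subseteq J_\ell + \Memb(I)$. Given a generator $m$ of $\Memb(J_\ell)$, with witness binomial $b = x^u - \lambda x^v$ ($u - v \notin \Sat(L_\rho)$) and $mb \in J_\ell$, I would clear $s$-denominators to get $s^N m b \in I + I(\rho_\ell)$, and then multiply by an element $e_\ell \in \bigcap_{k \neq \ell} I(\rho_k)$ chosen outside $P_\ell$. Using Theorem~\ref{thm:latticePrimaryDecomposition} in the form $\prod_k I(\rho_k) \subseteq I(\rho) \subseteq I$, one checks $e_\ell I(\rho_\ell) \subseteq I(\rho) \subseteq I$, so $e_\ell s^N m b \in I$; that is, $m(cb) \in I$ with $c = e_\ell s^N \in \kk[\NN^\Delta] \setminus I(\chi_\ell)$. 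Writing $(I:m) \cap \kk[\NN^\Delta] = I(\sigma)$, so $cb \in I(\sigma)$, I would run through the primary decomposition of $I(\sigma)$ (Theorem~\ref{thm:latticePrimaryDecomposition}): if the witness binomial $b$ lies in some associated prime $I(\psi_j)$, then $u - v \in \Sat(L_\sigma) \setminus \Sat(L_\rho)$, so $\rank L_\sigma > \rank L_\rho$ and $m \in \Memb(I)$; otherwise $b$ is a nonzerodivisor modulo each primary component of $I(\sigma)$, forcing $c \in I(\sigma)$ and hence $cm \in I$, which should place $m$ back in $J_\ell$.

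The main obstacle is precisely this last inclusion. It is not a formal property of saturation: the identity $((B + M):s^\infty) = (B:s^\infty) + M$ fails for a general binomial ideal $B$ and monomial ideal $M$, and indeed $\Memb(J_\ell) \not\subseteq \Memb(I)$ in general, so the correction term $J_\ell$ is genuinely needed and one cannot hope to prove $\Memb(J_\ell) \subseteq \Memb(I)$ outright. The delicate point in the argument above is that the multiplier $e_\ell$ may be a zerodivisor modulo $J_\ell$ --- this happens exactly when $m$ witnesses an embedded associated lattice created by the interaction of $I$ with the partner components $I(\rho_k)$, $k \neq \ell$ --- so the clean cancellation ``$cm \in I \Rightarrow m \in J_\ell$'' is the step that requires the full combinatorial control of associated lattices built into $\Memb$, rather than a one-line localization. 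Establishing that step (equivalently, the $\Delta$-cellularity and the vanishing of $\Memb$ for $J_\ell + \Memb(I)$) is where the real work of the proof lies.
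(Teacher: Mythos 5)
Your reduction of the theorem to the single containment $\Memb(J_\ell)\subseteq J_\ell+\Memb(I)$ is sound, and your proof of the easy inclusion $\Memb(I)\subseteq J_\ell+\Memb(J_\ell)$ is essentially the paper's. But the proposal then stops exactly where the proof has to begin: you state that establishing the hard containment ``is where the real work of the proof lies'' without doing that work, and the intermediate attempt via a multiplier $e_\ell\in\bigcap_{k\neq\ell}I(\rho_k)$ with $e_\ell\notin P_\ell$ collapses, as you yourself note, because $e_\ell\notin P_\ell$ does not make $e_\ell$ a nonzerodivisor modulo $J_\ell$: the zerodivisors modulo $J_\ell$ are the union of \emph{all} of its associated primes, including the embedded ones, which are precisely the objects under investigation. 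So this is a genuine gap, not a stylistic difference.

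You are also mistaken on a point of fact: the paper proves the stronger, outright containment $\Memb(J_\ell)\subseteq\Memb(I)$, which you assert cannot hold in general. The missing idea is combinatorial rather than localization-theoretic. Given a generator $x^\mu$ of $\Memb(J_\ell)$ with witness binomial $x^u-\lambda x^v$, $u-v\notin\Sat(L_\rho)$, one first multiplies by a monomial in $\kk[\NN^\Delta]$ so that $x^\mu(x^u-\lambda x^v)\in I+I(\rho_\ell)$, and writes this element explicitly as a combination of monomials and binomials from $I$ together with monomial multiples of generators of $I(\rho_\ell)$. Encoding the terms of this combination as vertices of a graph and the binomials as edges, one finds a path from $\mu+u$ to $\mu+v$. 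Edges coming from $I(\rho_\ell)$ leave the $\bDelta$-coordinates unchanged and have difference in $\Sat(L_\rho)$; hence either some edge with fixed $\bDelta$-part has difference outside $\Sat(L_\rho)$ (that edge then comes from a binomial of $I$, and Lemma~\ref{lemma:monomialsInMemb} lets one propagate membership in $I+\Memb(I)$ back along the path to $x^\mu$), or else the edges that change the $\bDelta$-part all come from $I$ and can be spliced together by successive S-pair cancellations into a single binomial of $I$ that witnesses $x^\mu\in\Memb(I)$ directly. This graph/S-pair argument is the content your proposal is missing.
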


\begin{proof}
We use the same notation as in the proof of
Theorem~\ref{thm:cellularPrimDecV1}, so that $J_\ell =
\big( (I + I(\rho_\ell)) : (\prod_{i \in \Delta} x_i)^\infty
\big)$. We wish to show that 
\begin{equation}
\label{eqn:componentMembI}
J_\ell + \Memb(J_\ell) = J_\ell + \Memb(I)
\end{equation}

We first claim that $\Memb(I) \subseteq J_\ell + \Memb(J_\ell)$ (which
implies
$\supseteq$ in~\eqref{eqn:componentMembI}).
As in the
proof of Theorem~\ref{thm:cellularPrimDecV1}, the lattice
corresponding to $J_\ell \cap
\kk[\NN^\Delta]$ has saturation
$\Sat(L_\rho)$. Therefore, if $m \in \Memb(I) \cap \kk[\NN^\bDelta]$ is
a monomial,
either $m \in J_\ell$, or
the binomial produced by Lemma~\ref{lemma:monomialsInMemb} for $m$ and
$I$ can be used to show that $m \in \Memb(J_\ell)$, since $I
\subseteq J_\ell$.

Now~\eqref{eqn:componentMembI} follows if we show that $\Memb(J_\ell) \subseteq \Memb(I)$.

Let $x^\mu$ be a generator of
$\Memb(J_\ell)$, that is, a witness monomial to an embedded
associated lattice of $J_\ell$, so in particular, $x^\mu
\in \kk[\NN^\bDelta]$, and $x^\mu \notin J_\ell$. Pick a binomial 
$x^u-\lambda x^v \in \kk[\NN^\Delta]$ as in
Lemma~\ref{lemma:monomialsInMemb} for $J_\ell$ and $x^\mu$. After multiplying by
a monomial in $\kk[\NN^\Delta]$, we may assume that $x^\mu(x^u-\lambda
x^v) \in I + I(\rho_\ell)$. As $x^\mu \notin J_\ell$, $x^\mu x^u$ and 
$x^\mu x^v$ do not belong to $I + I(\rho_\ell)$. We write the binomial
$x^\mu(x^u-\lambda x^v)$ as a linear combination of binomials and
monomials in $I$, and binomials of the form $mb$ where $m \in \kk[x]$
is a monomial and $b \in \kk[\NN^\Delta]$ is a generator of $I(\rho_\ell)$. 
%
%
%
%
%
We visualize this expression as a graph
$G$ with possibly multiple edges, whose vertices are the exponent
vectors of all the terms involved.
Two vertices are joined by an edge if 
the corresponding monomials are part of one of the binomials in the
combination. Note that an 
edge $(\alpha,\gamma)$ arising from an element of $I(\rho_{\ell})$
satisfies $\alpha_\bDelta = \gamma_\bDelta$, where 
$\alpha_\bDelta$ is the element of $\ZZ^\bDelta$ whose coordinates
indexed by $\bDelta$ coincide with those of $\alpha$, and similarly
for $\gamma_\bDelta$.

It is not hard to show that, since $x^\mu x^u$ and 
$x^\mu x^v$ do not belong to $I+I(\rho_\ell)$, the
vertices $\mu+u$ and $\mu+v$ belong to the same connected
component of $G$. Therefore there is a path $\Gamma$ in $G$ connecting
$\mu+u$ to $\mu+v$, that is, there exists a sequence of edges
$\varepsilon_1=
(\alpha_1,\gamma_1),\dots,\varepsilon_t=(\alpha_t,\gamma_t)$ of $G$
such that
$\alpha_1=\mu+u$, $\gamma_i = \alpha_{i+1}$ for $i=1,\dots,t-1$
and $\gamma_t =\mu+v$.

%

Suppose that one of the edges $\varepsilon_j = (\alpha,\gamma)$ 
(we remove the indices from the vertices to simplify notation) of the path $\Gamma$ is
such that $\alpha_\bDelta = \gamma_\bDelta$ and $\gamma-\alpha
\notin \Sat(L_\rho)$. This last condition implies that the edge $\varepsilon_j$
arises from a binomial in $I$.
By Lemma~\ref{lemma:monomialsInMemb}, we see that
$x^{\alpha_\bDelta}=x^{\gamma_\bDelta} \in I + \Memb(I)$.
Now consider $\varepsilon_{j-1} = (\hat{\alpha},\hat{\gamma})$, so that
$\hat{\gamma}=\alpha$. Either $\hat{\alpha}_\bDelta=\hat{\gamma}_\bDelta
(=\alpha_\bDelta)$, in which case $x^{\hat{\alpha}_\bDelta} \in I +
\Memb(I)$, or $\hat{\alpha}_\bDelta\neq\hat{\gamma}_\bDelta$, so that the
binomial corresponding to $\varepsilon_j$ belongs to $I$, and
consequently $x^{\hat{\alpha}_\bDelta} \in I + \Memb(I)$ again. Propagating
this argument along $\Gamma$, we see that $x^\mu \in I+\Memb(I)$, and
since $x^\mu \notin I$, we have $x^\mu \in \Memb(I)$.

%

We may now assume that for every edge
$\varepsilon_j=(\alpha_j,\gamma_j)$ of $\Gamma$ such that $(\alpha_j)_{\bDelta} = 
(\gamma_j)_\bDelta$, we have $\alpha_j-\gamma_j \in \Sat(L_\rho)$. 

Let $\varepsilon_{i_1},\dots,\varepsilon_{i_q}$ be the subsequence of
$\varepsilon_1,\dots,\varepsilon_t$ consisting of edges
$\varepsilon_j$ such that $(\alpha_j)_{\bDelta} \neq
(\gamma_j)_\bDelta$, with the convention that
$i_1<i_2<\cdots<i_q$. Note that the previous assumption and $u-v\notin
\Sat(L_\rho)$ imply that $q \geq 1$.
Each of the edges $\varepsilon_{i_j}$ is 
associated to a binomial that lies in $I$. 
By construction, $(\alpha_{i_1})_\bDelta = \mu = (\gamma_{i_q})_\bDelta$, and $(\gamma_{i_j})_\bDelta=
(\alpha_{i_{j+1}})_\bDelta$ for $i=1,\dots,q-1$.

If $q=1$, we can use the binomial corresponding to $\varepsilon_{i_1}$
to show that $x^\mu \in \Memb(I)$, so assume $q \geq 2$.
Write $\varepsilon_{i_1} =(\alpha,\gamma)$ and
$\varepsilon_{i_2}=(\hat{\alpha},\hat{\gamma})$, and consider the corresponding
binomials $f$ and $\hat{f}$ respectively. As
$\gamma_\bDelta=\hat{\alpha}_\bDelta$, we can choose monomials
$m,\hat{m} \in \kk[\NN^\Delta]$ and $\kappa,\hat{\kappa} \in \kk$ 
such that the \emph{S-pair} $g = \kappa m f - \hat{\kappa} \hat{m} \hat{f}$
cancels the $x^\gamma$ term in $f$ with the $x^{\hat{\alpha}}$ in
$\hat{f}$. Since $f,\hat{f} \in I$, $g \in I$.
The binomial $g$ is a linear combination of two monomials $x^{\bar{\alpha}}$
and $x^{\bar{\gamma}}$ satisfying $\bar{\alpha}_\bDelta = \mu$,
$\bar{\gamma}_\bDelta= \hat{\gamma}_\bDelta$, and
$\bar{\alpha}-\bar{\gamma}$ is congruent to $\alpha-\hat{\gamma}$
modulo $\Sat(L_\rho)$. We repeat this procedure with $g$ and the binomial
corresponding to $\varepsilon_{i_3}$ and continue, until
$\varepsilon_{i_q}$ is used. The outcome is a binomial in $I$ that can be
used, together with Lemma~\ref{lemma:monomialsInMemb}, to show that $x^\mu \in \Memb(I)$.
\end{proof}

\begin{remark}
\label{rmk:connectionToDMM}
When $\chr(\kk) = 0$,~\cite[Theorem~3.2.1]{DMM} can be applied to the
situation of Theorem~\ref{thm:cellularPrimDecV2}. The expression for 
the primary component given in~\cite{DMM} looks different
from~\eqref{eqn:componentMembI}, but it is not hard to show directly
that the two do indeed coincide, as they must, since minimal primary
components are uniquely determined.

We wish to emphasize that while~\cite[Theorem~3.2]{DMM} does not need
a cellularity hypothesis on the binomial ideals considered, it does 
require the base field to have characteristic zero. Moreover,
the proof of that result is based on the fact that the primary
decomposition of a binomial ideal over a field of characteristic zero
can be reduced to considering the primary components of binomial
ideals in monoid rings associated to prime ideals arising from faces
of the monoid, a fact that does not hold when $\chr(\kk) > 0$.

On the other hand, although Theorem~\ref{thm:cellularPrimDecV1}
applies regardless of the characteristic of $\kk$, the cellularity
assumption is crucial in our arguments.
\endrk
\end{remark}

The following result provides an explicit computation for the hull of
a cellular binomial ideal, and in particular, gives an alternative
proof for the fact that such a hull is binomial (see~\cite[Corollary~6.5]{ES}).

\begin{theorem} 
\label{thm:cellularHull}  
If $I$ is a $\Delta$-cellular binomial ideal then 
$\hull(I) = I + \Memb(I)$.
\end{theorem}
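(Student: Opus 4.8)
The plan is to compare the two intersections $\hull(I) = \bigcap_{\ell} Q_\ell$ and $I+\Memb(I) = \bigcap_{\ell} Q'_\ell$, where $Q_\ell$ and $Q'_\ell$ denote the minimal primary components of $I$ and of $I+\Memb(I)$ respectively. The second equality is legitimate because $I+\Memb(I)$ is unmixed by Theorem~\ref{thm:preliminaryToHull}, hence has no embedded primes and equals the intersection of its minimal primary components; moreover $I$ and $I+\Memb(I)$ share the same minimal primes $P_1,\dots,P_g$ (Theorem~\ref{thm:preliminaryToHull} together with Lemma~\ref{lemma:minimalPrimesOfCellular}), so the two intersections are indexed by the same set. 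I would first treat the case where $\kk$ is algebraically closed, where Theorem~\ref{thm:cellularPrimDecV2} describes the $Q_\ell$ explicitly, and then remove the field hypothesis by descent from $\bar{\kk}$.

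For algebraically closed $\kk$, Theorem~\ref{thm:cellularPrimDecV2} gives $Q_\ell = ((I+I(\rho_\ell)):(\prod_{i\in\Delta}x_i)^\infty) + \Memb(I)$, so in particular $I+\Memb(I) \subseteq Q_\ell$ for every $\ell$, whence $I+\Memb(I) \subseteq \bigcap_\ell Q_\ell = \hull(I)$. For the reverse inclusion I would use that a minimal primary component is the contraction of the localization at the corresponding minimal prime: the inclusion $I \subseteq I+\Memb(I)$, together with the fact that $P_\ell$ is minimal over both ideals, yields $Q_\ell = I\,\kk[x]_{P_\ell}\cap\kk[x] \subseteq (I+\Memb(I))\,\kk[x]_{P_\ell}\cap\kk[x] = Q'_\ell$. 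Intersecting over $\ell$ gives $\hull(I) = \bigcap_\ell Q_\ell \subseteq \bigcap_\ell Q'_\ell = I+\Memb(I)$, completing the algebraically closed case.

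To remove the hypothesis that $\kk$ is algebraically closed, I would pass to $\bar{\kk}$ and prove that forming the hull commutes with this extension, namely $\hull(I) = \hull(\bar{I})\cap\kk[x]$, where $\bar{I} = I\,\bar{\kk}[x]$. Granting this, Corollary~\ref{coro:MembAlgebraicClosure} gives $\Memb(\bar{I}) = \Memb(I)\,\bar{\kk}[x]$, so the algebraically closed case applied to $\bar{I}$ yields $\hull(\bar{I}) = \bar{I} + \Memb(\bar{I}) = (I+\Memb(I))\,\bar{\kk}[x]$. Contracting back and using that $\bar{\kk}[x]$ is faithfully flat over $\kk[x]$, so that $(I+\Memb(I))\,\bar{\kk}[x]\cap\kk[x] = I+\Memb(I)$, we would obtain $\hull(I) = I+\Memb(I)$, as desired.

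The main obstacle is precisely the descent identity $\hull(I) = \hull(\bar{I})\cap\kk[x]$, because a single minimal prime of $I$ over $\kk$ may split into several minimal primes of $\bar{I}$ over $\bar{\kk}$. I would handle it through the membership criterion that $f \in \hull(J)$ if and only if $(J:f) \not\subseteq P$ for every minimal prime $P$ of $J$, which follows from the description of minimal primary components as contractions of localizations. For $f \in \kk[x]$ one has $(\bar{I}:f) = (I:f)\,\bar{\kk}[x]$, since colon ideals commute with flat base change, and Theorem~\ref{thm:associatedPrimesFieldExtension} identifies the minimal primes of $\bar{I}$ as exactly the primes of $\bar{\kk}[x]$ lying over minimal primes of $I$, all of the same codimension. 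Combining these two facts with lying-over then shows that $(I:f) \not\subseteq P$ for all minimal $P$ of $I$ if and only if $(I:f)\,\bar{\kk}[x] \not\subseteq \bar{P}$ for all minimal $\bar{P}$ of $\bar{I}$, which is exactly the equivalence $f \in \hull(I) \Leftrightarrow f \in \hull(\bar{I})$ needed to finish the proof.
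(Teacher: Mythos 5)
Your proof is correct, and while it shares the paper's overall skeleton (settle the algebraically closed case using Theorem~\ref{thm:cellularPrimDecV2} and Theorem~\ref{thm:preliminaryToHull}, then descend to general $\kk$ via Corollary~\ref{coro:MembAlgebraicClosure}), both halves are executed differently. For algebraically closed $\kk$, the paper proves that the $P_\ell$-primary components of $I$ and of $I+\Memb(I)$ literally coincide, using the explicit saturation identity $\big((I+I(\rho_\ell)):(\prod_{i\in\Delta}x_i)^\infty\big)+\Memb(I)=\big(\big[\cdots\big]:(\prod_{i\in\Delta}x_i)^\infty\big)$ together with cellularity; you instead get the inclusion $\hull(I)\subseteq I+\Memb(I)$ essentially for free from $Q_\ell=I\,\kk[x]_{P_\ell}\cap\kk[x]\subseteq(I+\Memb(I))\,\kk[x]_{P_\ell}\cap\kk[x]=Q'_\ell$, which is cleaner and, notably, valid over any field (only your forward inclusion $I+\Memb(I)\subseteq\bigcap_\ell Q_\ell$ actually invokes Theorem~\ref{thm:cellularPrimDecV2} and hence algebraic closure). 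For the descent, the paper proves only the inclusion $\hull(I)\subseteq\hull(\bar I)\cap\kk[x]$ (by contracting $\bar P$-primary components and citing~\cite[Corollary~10.21]{AM}) and then establishes $\Memb(I)\subseteq\hull(I)$ by a separate direct argument with the witness binomial of Lemma~\ref{lemma:monomialsInMemb}; you prove the stronger, purely commutative-algebraic identity $\hull(I)=\hull(\bar I)\cap\kk[x]$ via the criterion $f\in\hull(J)\Leftrightarrow(J:f)\not\subseteq P$ for all minimal $P$, flat base change of colon ideals, and Theorem~\ref{thm:associatedPrimesFieldExtension}. Your descent lemma is more general (it uses nothing binomial) and could be stated separately; the paper's route avoids proving the reverse inclusion of the descent identity by exploiting the binomial structure one more time. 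Both arguments are complete; the only point worth making explicit in yours is that $\bar\kk[x]$ is free, hence faithfully flat, over $\kk[x]$, which you do note, and that every minimal prime of $\bar I$ contracts to a minimal prime of $I$ (and conversely every minimal prime of $I$ is dominated by one of $\bar I$), which follows from Theorem~\ref{thm:associatedPrimesFieldExtension}(3) plus lying over, as you indicate.
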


\begin{proof}
We already know from Theorem~\ref{thm:preliminaryToHull} that
$I+\Memb(I)$ is $\Delta$-cellular, unmixed, and with the same minimal
associated primes as $I$. 

Assume first that $\kk$ is algebraically closed.
We use the same notation as in the proof of
Theorems~\ref{thm:cellularPrimDecV1} and~\ref{thm:cellularPrimDecV2};
our goal is to show that the primary
component of $I+\Memb(I)$ corresponding to the (minimal) associated prime
$P_\ell = \kk[x] \cdot I(\chi_\ell) + \< x_i \mid i \in \bDelta\>$
equals the $P_\ell$-primary component of $I$.

By Theorems~\ref{thm:cellularPrimDecV2} and~\ref{thm:preliminaryToHull}, 
the $P_\ell$-primary component of $I+\Memb(I)$ is 
\[
\big((I+\Memb(I)+I(\rho)_\ell): (\prod_{i \in \Delta}x_i)^\infty\big),
\]
which clearly contains the $P_\ell$-primary component,
$\big( (I+I(\rho_\ell)):(\prod_{i\in
  \Delta}x_i)^\infty \big) + \Memb(I)$, of $I$. As this ideal is $\Delta$-cellular,
%
%
\[
\big( (I+I(\rho_\ell)):(\prod_{i\in \Delta}x_i)^\infty \big) + \Memb(I) = 
\bigg( \bigg[
\big( (I+I(\rho_\ell)):(\prod_{i\in \Delta}x_i)^\infty \big) + \Memb(I)
\bigg] : (\prod_{i\in \Delta}x_i)^\infty \bigg).
\]
Therefore
$\big((I+\Memb(I)+I(\rho)_\ell): (\prod_{i \in
  \Delta}x_i)^\infty\big)$ is contained in 
$\big( (I+I(\rho_\ell)):(\prod_{i\in \Delta}x_i)^\infty \big) +
\Memb(I)$, which shows that the $P_\ell$-primary components of $I$ and
$I+\Memb(I)$ coincide.

Now assume that $\kk$ is not algebraically closed. Write $\bar{I}$ for
the extension of $I$ to $\bar{\kk}[x]$. Let $P$ be a minimal prime of
$I$, and $\bar{P}$ a minimal prime of $\bar{I}$ lying over $P$.
If $\bar{Q}$ is the $\bar{P}$-primary
component of $\bar{I}$, then $Q = \bar{Q} \cap \kk[x] \supseteq I$ is $P$-primary
(apply Theorem~\ref{thm:associatedPrimesFieldExtension} to see that
$P$ is the unique associated prime of $Q$). This implies that $Q$
contains the $P$-primary component of $I$
by~\cite[Corollary~10.21]{AM}. Consequently $\hull(I) \subseteq
\hull(\bar{I}) \cap \kk[x]$. But we know that $\hull(\bar{I}) =
\bar{I} + \Memb(\bar{I})$. Since $\Memb(\bar{I})=\bar{\kk}[x] \cdot \Memb(I)$, we see
that $I + \Memb(I) = \hull(\bar{I}) \cap \kk[x] \supseteq \hull(I)$.

Clearly, $I \subseteq \hull(I)$, so in order to prove $I+\Memb(I) =
\hull(I)$ it is enough to show that $\Memb(I) \subseteq \hull(I)$. Let
$m \in \kk[\NN^{\bDelta}]$ be a generator of $\Memb(I)$, and let $b = x^u
-\lambda x^v \in \kk[\NN^\Delta]$ be the binomial produced by
Lemma~\ref{lemma:monomialsInMemb}. Then $b$ does not belong to any
minimal prime of $I$, since it does not belong to any minimal prime of
$\bar{I}$ (use the fact that $u-v \notin \Sat(L_\rho)$). This and $mb
\in I$ imply that $m$ belongs to the kernel of every 
localization map $(\kk[x]/I) \to (\kk[x]/I)_P$, for $P$ a minimal
prime of $I$.
\end{proof}

\section{An unmixed decomposition for a cellular binomial ideal}
\label{sec:unmixed}
 
In this section we complete the proof of
Theorem~\ref{thm:unmixedDecomposition} by constructing an 
unmixed decomposition for a cellular binomial ideal. We work by
Noetherian induction; the following result provides the inductive step.

\begin{theorem}
\label{thm:inductionStep}
Let $I$ be a $\Delta$-cellular binomial ideal in $\kk[x]$. 
Let $(L_{\tau_1},\tau_1),\dots,(L_{\tau_k},\tau_k)$ be partial characters
on $\ZZ^\Delta$ corresponding to embedded associated lattices of $I$
(so in particular there exist monomials $m_j \in \Memb(I)$
with $(I:m_j)= I(\tau_j)$) such that the ideals $I(\tau_1),\dots,I(\tau_k)$
are minimal (with respect to inclusion) among the lattice ideals
arising in this way. (Call such lattice ideals the \emph{minimal
  embedded lattice ideals of $I$}.)
Then 
\begin{equation}
\label{eqn:inductionStep}
I  = \bigg[ I + \Memb(I) \bigg] \bigcap \bigg[\bigcap_{j=1}^k \big( (I
+ I(\tau_j)): (\prod_{i\in \Delta} x_i)^\infty \big) \bigg] .
\end{equation}
\end{theorem}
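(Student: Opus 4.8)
The inclusion $I \subseteq R$, where $R$ denotes the right-hand side of~\eqref{eqn:inductionStep}, is immediate: $I \subseteq I + \Memb(I)$, and $I \subseteq (I + I(\tau_j)):(\prod_{i\in\Delta} x_i)^\infty$ for every $j$ since saturation only enlarges an ideal. Every ideal appearing on the right is binomial---$I + \Memb(I)$ by Theorem~\ref{thm:cellularHull}, and each $I_j \defeq (I+I(\tau_j)):(\prod_{i\in\Delta}x_i)^\infty$ by Lemma~\ref{lemma:idealQuotient}---so $R$ is binomial, and it suffices to prove that every monomial and every binomial of $R$ lies in $I$. Throughout, let $(L_\rho,\rho)$ be the partial character with $I\cap\kk[\NN^\Delta]=I(\rho)$.

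The plan is to reduce an arbitrary $f \in R$ modulo $I$ to a combination of witness monomials. Since $R \subseteq I + \Memb(I)$, Proposition~\ref{propo:binomial+monomial} lets me discard the terms of $f$ lying in $I$, and Proposition~\ref{propo:monomialsInI+Memb(I)} shows that the remaining terms are scalar multiples of monomials in $\Memb(I)\setminus I$. Thus it is enough to treat $g = \sum_\nu c_\nu x^\nu$ with each $x^\nu \in \Memb(I)\setminus I$ and $g \in \bigcap_j I_j$ (note $g \equiv f$ modulo $I\subseteq I_j$), and to show $g \in I$. Because $\Memb(I)$ is generated in $\kk[\NN^\bDelta]$, each $x^\nu$ is divisible by a witness monomial lying in $\kk[\NN^\bDelta]$.

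The heart of the argument is the interaction of a witness monomial with the minimal embedded lattice ideals. Fix such an $x^\mu$, and let $(L_\sigma,\sigma)$ be the embedded partial character it witnesses, so that $(I:x^\mu)\cap\kk[\NN^\Delta]=I(\sigma)$ with $\rank(L_\sigma)>\rank(L_\rho)$. By the minimality hypothesis there is an index $j$ with $I(\tau_j)\subseteq I(\sigma)$; then $L_{\tau_j}\subseteq L_\sigma$ and $\sigma$ restricts to $\tau_j$, which forces $x^\mu\, I(\tau_j)\subseteq I$, because each generator $x^a-\tau_j(a-b)x^b$ of $I(\tau_j)$ coincides with a generator of $I(\sigma)\subseteq (I:x^\mu)\cap\kk[\NN^\Delta]$. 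Membership $g\in I_j$ then yields $(\prod_{i\in\Delta}x_i)^N\,g\in I+I(\tau_j)$ for some $N$, and I would expand this along the binomials of $I$ and of $I(\tau_j)$, recording the computation as a graph on the exponent vectors of the terms involved, exactly as in the proofs of Theorems~\ref{thm:cellularPrimDecV2} and~\ref{thm:cellularHull}. Tracking the $\bDelta$-degrees along paths connecting the relevant vertices, and using $x^\mu\,I(\tau_j)\subseteq I$ to absorb the edges contributed by $I(\tau_j)$, an S-pair argument should produce a binomial of $I$ that, via Lemma~\ref{lemma:monomialsInMemb}, exhibits the $\Memb(I)$-part of $g$ as already lying in $I$.

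The main obstacle is that one cannot argue monomial-by-monomial: multiplying $(\prod_{i\in\Delta}x_i)^N x^\mu\in I+I(\tau_j)$ through by $x^\mu$ only gives $x^{2\mu}\in I$, and in general $x^{2\mu}\in I$ does \emph{not} imply $x^\mu\in I$, since the $\bDelta$-variables are merely nilpotent modulo $I$ (this is visible already in Example~\ref{example:Memb}, where $x_1^{6}\in I$ but $x_1^{3}\notin I$). Consequently the cancellations among the $x^\nu$ must be handled collectively through the graph/path argument, and this is exactly where the positive-characteristic difficulties concentrate: by Theorem~\ref{thm:latticePrimaryDecomposition} the distinction between a lattice and its saturation is genuine, so the binomials of $I(\tau_j)$ need not correspond to elements of $\Sat(L_\rho)$, and care is required to ensure that the edges they contribute do not create spurious connections. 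To control this I would first pass to the algebraic closure via Corollary~\ref{coro:MembAlgebraicClosure} and Theorem~\ref{thm:associatedPrimesFieldExtension}, which preserve $\Memb$, cellularity, and the associated-prime structure, thereby reducing to the situation where the explicit primary components of Theorem~\ref{thm:cellularPrimDecV2} are available and the minimal embedded lattice ideals match the minimal embedded associated primes cleanly.
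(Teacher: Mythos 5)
Your reduction of the problem is sound as far as it goes: passing from an arbitrary $f$ in the right-hand side to a $\kk$-linear combination $g$ of monomials in $\Memb(I)\setminus I$ via Propositions~\ref{propo:binomial+monomial} and~\ref{propo:monomialsInI+Memb(I)} is valid, the observation that $I(\tau_j)\subseteq (I:x^\mu)\cap\kk[\NN^\Delta]$ forces $x^\mu\, I(\tau_j)\subseteq I$ is exactly the right key fact, and you correctly identify why one cannot argue monomial-by-monomial. But the proof stops precisely where the work begins: the claim that a graph/S-pair argument ``should produce'' a binomial of $I$ exhibiting $g\in I$ is not carried out, and it is not clear it can be in the form you describe. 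In particular, for a fixed $j$ only those terms $x^\nu$ of $g$ whose witnessed lattice ideal contains $I(\tau_j)$ satisfy $x^\nu I(\tau_j)\subseteq I$, so the ``absorption'' of the $I(\tau_j)$-edges fails for the remaining terms, and you never explain how the different indices $j$ attached to different terms of $g$ are to be played off against one another. The closing suggestion to pass to $\bar\kk$ is also problematic: the minimal embedded lattice ideals of $\bar I$ may be strictly larger than the extensions of the $I(\tau_j)$ (the lattice $L_{\bar\tau}$ can properly contain $L_\tau$), so the right-hand side of~\eqref{eqn:inductionStep} does not obviously extend compatibly, and the theorem is meant to hold over arbitrary fields without such a reduction.

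The paper's proof avoids all of this by inducting on $k$ rather than attacking the intersection at once. For $k\geq 1$ it introduces the monomial ideal $M=\langle m\in\Memb(I)\mid (I:m)\cap\kk[\NN^\Delta]\supseteq I(\tau_1)\rangle$, i.e., exactly the witnesses compatible with $I(\tau_1)$, so that $M\cdot I(\tau_1)\subseteq I$ by construction. A short application of Proposition~\ref{propo:binomial+monomial} then gives the identity $I=(I+M)\cap(I+I(\tau_1))$: writing $f\in(I+M)\cap(I+I(\tau_1))$ as $f_1+f_2+f_3$ with $f_2$ the part of the $I(\tau_1)$-combination supported on monomials of $I\cup M$, one gets $f_1+f_2\in I$ and $f_3\in I$ by the proposition. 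Saturating, checking that $\big((I+M):(\prod_{i\in\Delta}x_i)^\infty\big)$ has minimal embedded lattice ideals $I(\tau_2),\dots,I(\tau_k)$, and applying the inductive hypothesis finishes the argument. You should either supply the missing combinatorial argument in full or restructure along these inductive lines; as written, the central containment of the right-hand side in $I$ is asserted rather than proved.
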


\begin{proof}
By induction on $k$. The base case is $k=0$, when $I$ has no
embedded associated lattices, $\Memb(I)=0$, and $I$ is unmixed, so
that~\eqref{eqn:inductionStep} clearly holds.

Now let $k \geq 1$. Set $M = \< m \in \Memb(I) \mid (I:m) \cap
\kk[\NN^\Delta] = I(\tau) \text{ with } I(\tau) \supseteq I(\tau_1)
\>$. We claim that $I = (I+M) \cap (I+I(\tau_1))$. 

To see this, we first observe that the monomials in $I+M$ belong to either $I$ or
$M$, which follows by the same proof as Proposition~\ref{propo:monomialsInI+Memb(I)}.

Let $f \in (I+M) \cap (I+I(\tau_1))$ and use $f \in I+I(\tau_1)$ to
write $f = f_1+f_2+f_3$, where $f_1 \in I$, $f_2 \in \kk[x] \cdot
I(\tau_1)$ is a linear combination of binomials in $I(\tau_1)$ times
monomials that are either in $I$ or in $M$, and $f_3 \in \kk[x] \cdot
I(\tau_1)$ is a linear combination of binomials in $I(\tau_1)$ times
monomials that are neither in $I$ nor in $M$. Note that, by definition
of $M$, the product of
a monomial in $M$ times a binomial in $I(\tau_1)$ belongs to $I$.
Thus $f_3 = f - f_1 -f_2 \in I + M$, and its individual
terms do not belong to $I+M$. By
Proposition~\ref{propo:binomial+monomial}, $f_3 \in I$, and
consequently $f \in I$. This shows that $(I+M) \cap (I+I(\tau_1))
\subseteq I$, and the other inclusion is obvious.

As $I$ is $\Delta$-cellular, we can saturate both sides of $I = (I+M)
\cap (I+I(\tau_1))$ to obtain $I =
\big((I+M):(\prod_{i\in\Delta}x_i)^\infty\big) \cap \big(
(I+I(\tau_1)): (\prod_{i\in\Delta}x_i)^\infty \big)$. We wish to apply
the inductive hypothesis to the $\Delta$-cellular ideal
$\big((I+M):(\prod_{i\in\Delta}x_i)^\infty\big)$, but in order to do
this, we need to show that this ideal has fewer minimal
embedded lattice ideals than $I$.

Let $m \in \kk[\NN^\bDelta]$ be a monomial not in $I+M$. If $b \in
\kk[\NN^\Delta]$ is a binomial such that $mb \in
\big((I+M):(\prod_{i\in\Delta}x_i)^\infty\big)$, then there exists a
monomial $m' \in \kk[\NN^\Delta]$ such that $m'm b \in I+M$. The
individual terms of the binomial $m'm b$ do not belong to $I+M$,
since $m$ does not (and the ideal of all monomials in $I+M$ is generated
by monomials in $\kk[\NN^\bDelta]$). 
Thus,
Proposition~\ref{propo:binomial+monomial} implies that $m'm b$
belongs to the $\Delta$-cellular ideal $I$, whence $mb \in
I$. 
This means that $\big(
\big((I+M):(\prod_{i\in\Delta}x_i)^\infty\big): m \big) \cap
\kk[\NN^\Delta]= (I:m)\cap \kk[\NN^\Delta]$. Since $\big(
\big((I+M):(\prod_{i\in\Delta}x_i)^\infty\big): m \big) = \langle 1 \rangle$
if $m \in I+M$, we conclude that the minimal embedded
lattice ideals of
$\big((I+M):(\prod_{i\in\Delta}x_i)^\infty\big)$ are $I(\tau_2),\dots,I(\tau_k)$.

By inductive hypothesis, 
\begin{align*}
\big((I+M):(\prod_{i\in\Delta}x_i)^\infty\big) = &
\bigg[
\big((I+M):(\prod_{i\in\Delta}x_i)^\infty\big) + \Memb
\big((I+M):(\prod_{i\in\Delta}x_i)^\infty\big) \bigg] \\
& \bigcap 
\bigg[
\bigcap_{j=2}^k \bigg( \big[ \big((I+M):(\prod_{i\in\Delta}x_i)^\infty\big)
+ I(\tau_j)\big] : (\prod_{i\in\Delta}x_i)^\infty \bigg)
\bigg] 
\end{align*}
and therefore 
\begin{align}
\nonumber
I  = &
 \bigg[
\big((I+M):(\prod_{i\in\Delta}x_i)^\infty\big) + \Memb
\big((I+M):(\prod_{i\in\Delta}x_i)^\infty\big) \bigg] \\
\nonumber
& \bigcap 
\bigg[
\bigcap_{j=2}^k \bigg( \big[ \big((I+M):(\prod_{i\in\Delta}x_i)^\infty\big)
+ I(\tau_j)\big] : (\prod_{i\in\Delta}x_i)^\infty \bigg)
\bigg] \\
& \bigcap
\big( (I+I(\tau_1)) : (\prod_{i\in\Delta}x_i)^\infty\big).
\label{eqn:largerIntersection}
\end{align}
Every term of the right hand side above contains the
corresponding term of 
\[ 
(I + \Memb(I)) \bigcap \big[ \bigcap_{j=2}^k  \big( (I+I(\tau_j)) : (\prod_{i\in\Delta}x_i)^\infty\big)\big] \bigcap \big( (I+I(\tau_1)) : (\prod_{i\in\Delta}x_i)^\infty\big);
\]
that the first intersectand in~\eqref{eqn:largerIntersection} contains
$I+\Memb(I)$ follows by the same argument that showed that the minimal embedded
lattice ideals of
$\big((I+M):(\prod_{i\in\Delta}x_i)^\infty\big)$ are $I(\tau_2),\dots,I(\tau_k)$.

We conclude that
\[
I  \supseteq \bigg[ I + \Memb(I) \bigg] \bigcap \bigg[\bigcap_{j=1}^k \big( (I
+ I(\tau_j)): (\prod_{i\in \Delta} x_i)^\infty \big) \bigg],
\]
and the reverse inclusion is clear.
\end{proof}

\begin{proof}[Proof of Theorem~\ref{thm:unmixedDecomposition}]
By Theorems~\ref{thm:cellularPrimDecV2} and~\ref{thm:cellularHull},
the only statement that remains to be shown is that $I$ equals
\begin{equation}
\label{eqn:unmixedDecompositionRHS}
\hull(I) \bigcap \bigg[ \bigcap_{m \in \Memb(I)}
\hull\bigg((I+I(\rho(m))):(\prod_{i\in\Delta} x_i)^\infty \bigg) \bigg], 
\end{equation}
where, for $m\in \Memb(I)$, $(I:m) \cap \kk[\NN^\Delta] = I(\rho(m))$.

If $I$ is unmixed, then $\Memb(I) = 0$, and the desired statement holds. 
By Noetherian induction, we may assume that the
statement holds for any
$\Delta$-cellular binomial ideal strictly containing $I$. 

Now consider the case that $I$ is mixed, and apply
Theorem~\ref{thm:inductionStep} to $I$. Each ideal $K_j = \big(
(I+I(\tau_j)) : (\prod_{i\in\Delta} x_i)^\infty \big)$ appearing in
Theorem~\ref{thm:inductionStep} is a $\Delta$-cellular binomial ideal
strictly containing $I$, so that 
\begin{equation}
\label{eqn:jthDecomposition}
K_j = \hull(K_j) \bigcap \bigg[ \bigcap_{m \in \Memb(K_j)}
\hull\bigg( (K_j+I(\tau_j(m))) : (\prod_{i\in\Delta} x_i)^\infty \bigg)\bigg], 
\end{equation}
where, for $m\in \Memb(K_j)$, $(K_j:m) \cap \kk[\NN^\Delta] = I(\tau_j(m))$.

Using the proof of Theorem~\ref{thm:cellularPrimDecV2}, we see that
$\Memb(K_j) \subseteq \Memb(I)$. Moreover, if $m \in \Memb(K_j)$, $K_j
\supset I$ implies that $I(\tau_j(m)) \supset I(\rho(m))$ and
$\hull\bigg( (K_j + I(\tau_j(m))):(\prod_{i\in\Delta}x_i)^\infty\bigg) \supseteq \hull \bigg( (I+I(\rho(m))):(\prod_{i\in\Delta}x_i)^\infty\bigg)$.

Thus, intersecting $\hull(I)$ and the
decompositions~\eqref{eqn:jthDecomposition} for $j=1,\dots,k$, we
obtain an ideal that contains the
intersection~\eqref{eqn:unmixedDecompositionRHS}. But this ideal equals
$I$ by construction. Moreover, it is clear that $I$ is contained
in~\eqref{eqn:unmixedDecompositionRHS}, and therefore it equals that
intersection, as we wanted.
\end{proof}

We conclude with three examples. The first one illustrates the mechanics
of the proof of Theorem~\ref{thm:unmixedDecomposition}; the second and
third ones show that the unmixed decomposition and
cellular binomial primary decomposition produced in
that theorem may both be redundant.

\begin{example}
\label{example:redundancies}
We are grateful to Christopher O'Neill, who shared this example with
us. 

Let $I = \<x_5(x_1-x_2), x_6(x_3-x_4), x_5^2, x_6^2,
x_5x_6\>$. Then $I$ is $\{1,2,3,4\}$-cellular, and
$\<x_5,x_6\>$ is the unique minimal
prime of $I$. In this case,
$\Memb(I) = \<x_5, x_6\>$, and the embedded associated lattice ideals
of $I$ arise via
\[
(I:x_5)\cap\kk[x_1,\dots,x_4] = \<x_1-x_2\>; \quad(I:x_6)\cap
\kk[x_1,\dots,x_4]=\<x_3-x_4\>. 
\]
The unmixed decomposition of $I$ given by
Theorem~\ref{thm:unmixedDecomposition} is 
\[
I = \<x_5,x_6\> \cap \< x_1-x_2, x_5^2,x_6\> \cap \<
x_3-x_4,x_5,x_6^2\>.
\]
On the other hand, if we follow the proof of that theorem, we first apply
Theorem~\ref{thm:inductionStep} to obtain
\begin{multline*}
I = \<x_5,x_6\> \cap \< x_1-x_2,
x_6(x_3-x_4), x_5^2, x_6^2, x_5x_6\>
 \cap \<x_5(x_1-x_2),x_3-x_4, x_5^2, x_6^2, x_5x_6\>.
\end{multline*}
In this case, the latter two intersectands have an embedded associated
lattice, so the proof of Theorem~\ref{thm:unmixedDecomposition}
requires us to perform
\begin{multline*}
\< x_1-x_2, x_6(x_3-x_4), x_5^2, x_6^2, x_5x_6\> =
\<x_1-x_2,x_5^2,x_6\> \cap \<x_1-x_2,x_3-x_4, x_5^2, x_6^2, x_5x_6\>,
\end{multline*}
and
\begin{multline*}
\<x_5(x_1-x_2),x_3-x_4, x_5^2, x_6^2,
x_5x_6\>=\<x_3-x_4,x_5,x_6^2\> \cap \<x_1-x_2,x_3-x_4,x_5^2, x_6^2, x_5x_6\>,
\end{multline*}
yielding the redundant intersectand $\<x_1-x_2,x_3-x_4,x_5^2, x_6^2, x_5x_6\>$.
\endrk
\end{example}

\begin{example}
\label{ex:redundantUnmixedDec}
Let $I = \<x_3^2(x_1^2-x_2^2), x_3(x_1^4-x_2^4), x_3^3\>$, a
$\{1,2\}$-cellular binomial ideal in
$\kk[x_1,x_2,x_3]$. In this case, the unmixed decomposition from
Theorem~\ref{thm:unmixedDecomposition} is
\[ 
I = \<x_3\> \cap
\<x_3^2(x_1^2-x_2^2),x_1^4-x_2^4,x_3^3\> \cap \<x_1^2-x_2^2,x_3^3\>,
\]
which is redundant, as the third intersectand contains the second.
\endrk
\end{example}

\begin{example}
\label{ex:redundantPrimDec}
Let $I = \<x_2(x_1^2-1), x_3(x_1^3-1), x_2^2,x_3^2,x_2x_3 \>$, a
$\{1\}$-cellular binomial ideal in
$\kk[x_1,x_2,x_3]$. In this case, the unmixed decomposition from
Theorem~\ref{thm:unmixedDecomposition} is
\begin{multline*}
I = \<x_2,x_3\> \cap
\<x_2(x_1^2-1),x_1^3-1,x_2^2,x_3^2,x_2x_3\> \cap \<x_1^2-1,x_3(x_1^3-1),x_2^2,x_3^2,x_2x_3\>.
\end{multline*}
Note that $P=\<x_1-1,x_2,x_3\>$ is an associated prime of both the
second and third intersectands. If $\chr(\kk) \neq 2,3$, their
corresponding $P$- primary components
both equal $\<x_1-1,x_2^2,x_3^2,x_2 x_3\>$.

If $\chr(\kk) =2$, we obtain $\<x_1^2-1,x_2^2,x_3^2,x_2x_3\>$
for the $P$-primary component of the second intersectand, and
$\<x_1-1,x_2^2,x_3^2,x_2x_3\>$ for the $P$-primary component
of the third.

If $\chr(\kk)=3$, we obtain $\<x_1-1,x_2^2,x_3^2,x_2x_3\>$ for
the $P$-primary component of the second intersectand, and
$\<x_1^3-1,x_2^2,x_3^2,x_2x_3\>$ for the $P$-primary component
of the third.

These calculations show that the primary
decomposition from Theorem~\ref{thm:unmixedDecomposition} is redundant
in this case. 
\endrk
\end{example}

\section{Computational Implications}
\label{sec:implications}

In this section we explore the implications of our main results,
especially to computational binomial primary decomposition. 
Considerations of possible computer implementation have informed all the
developments in this article. It is for this reason that we have tried to only
make assumptions on the base field when strictly necessary; many of
our proofs would be simplified if we required the field $\kk$ to be
algebraically closed throughout.

The \texttt{Macaulay2}~\cite{M2} package \texttt{Binomials}, implemented by Thomas
Kahle~\cite{K,KM2}, computes binomial primary decomposition using
Kahle's improvements of the algorithms in~\cite{ES,OS}. The input is
required to be an ideal generated by differences of monomials (a
\emph{unital} or \emph{pure difference} binomial ideal), and the base field is assumed to be
algebraically closed of characteristic zero.

We briefly describe this procedure, whose first step is
to find a cellular decomposition of the input binomial
ideal. For each cellular (binomial) component $J$, the ideal
$\Memb(J)$ is computed, keeping track of which lattice ideals occur,
so that the associated primes of $J$ may be found by saturating partial
characters. If $P = \kk[x] \cdot I(\chi) + \< x_i \mid i \in
\bDelta\>$ is such an associated prime, then the package
\texttt{Binomials} outputs 
$((J+I(\chi)):(\prod_{i \in \Delta} x_i)^\infty) + 
\Memb ((J+I(\chi)):(\prod_{i \in \Delta} x_i)^\infty)$ as the
$P$-primary component of $J$. 
The final step in the computation is to remove redundancies that occur
if an associated prime of a cellular component of the input ideal $I$
is not an associated prime of $I$.

We believe that our results can be used to facilitate the implementation
of a positive-characteristic version of the above
computation. 
First, we observe that the cellular decomposition of a binomial ideal
is characteristic independent, so we concentrate on the primary
decomposition of a cellular component $J$ of the binomial ideal $I$. 
The computation of $\Memb(J)$ is already implemented in
\texttt{Binomials}; the only modification needed to make it applicable
to any cellular binomial ideal over any field, is that a dimension
check needs to be added to verify the condition on saturations. 

The monomial ideal $\Memb(J)$ and all of the corresponding associated lattices need to
be computed in order to find ${\rm Ass}(\kk[x]/J)$. Since this
computation already contains most of the information necessary to
perform the unmixed decomposition~\eqref{eqn:unmixedDecompositionExplicit},
we propose computing this decomposition as an intermediate step.
Note that the computations of $\Memb(J)$ and of the unmixed
decomposition~\eqref{eqn:unmixedDecompositionExplicit} do not require
any assumptions on the base field, and therefore can be
performed over finite fields.

Next, the primary decomposition of an unmixed cellular binomial
ideal is given in Theorem~\ref{thm:hullDecomposition} (or
Theorem~\ref{thm:cellularPrimDecV2}), and requires a finite extension
of the base field in order to find saturations of partial
characters. The final step would be, as before, to remove redundancies.

We have thus outlined an adaptation of the current software that would
implement the 
computation of binomial primary decomposition of (unital) binomial
ideals over finite fields.

We finish this section with two results that can be quickly derived
from the statements in this article.
The first is an easy characterization of binomial primary
ideals; this statement seems clear from the construction of $\Memb(I)$
for a cellular binomial ideal $I$ and from
Theorem~\ref{thm:cellularWitnesses}, but it does require proof 
(especially since we do not wish to assume that the field $\kk$
is algebraically closed).

\begin{lemma}
\label{lemma:primaryBinomialIdeal}
Let $I$ be a binomial ideal in $\kk[x]$. Then $I$ is primary if and
only if $I$ is $\Delta$-cellular for some $\Delta \subseteq
\{1,\dots,n\}$, the lattice ideal $I \cap \kk[\NN^\Delta]$ is primary, and
$\Memb(I) =0$. 
\end{lemma}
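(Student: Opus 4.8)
The plan is to prove the two implications separately, in each case leaning on the structural results already established for cellular binomial ideals rather than reproving anything about lattice ideals from scratch.

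For the forward direction, suppose $I$ is primary. I would first observe that $I$ is automatically cellular: in a primary ideal every zerodivisor modulo $I$ is nilpotent modulo $I$, so each variable $x_i$ is either a nonzerodivisor or nilpotent modulo $I$, which is exactly the condition of Definition~\ref{def:cellular}. Let $\Delta$ index the nonzerodivisor variables, so that $I$ is a $\Delta$-cellular binomial ideal. Next, a primary ideal has a single associated prime and is therefore trivially unmixed, so Corollary~\ref{coro:zeroMemb} yields $\Memb(I)=0$. Finally, $I\cap\kk[\NN^\Delta]$ is the contraction of the primary ideal $I$ to the subring $\kk[\NN^\Delta]$, hence primary by the standard fact that contractions of primary ideals along ring homomorphisms are primary (\cite{AM}); by Lemma~\ref{lemma:latticeIdealOfCellular} this contraction is the lattice ideal $I(\rho)$, which is therefore primary. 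This settles all three required properties.

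For the reverse direction, suppose $I$ is $\Delta$-cellular, $I\cap\kk[\NN^\Delta]=I(\rho)$ is primary, and $\Memb(I)=0$. The key point is that a primary ideal has a unique associated prime, so in particular $I(\rho)$ has a unique minimal prime $P_0\subset\kk[\NN^\Delta]$. By Lemma~\ref{lemma:minimalPrimesOfCellular}, the minimal associated primes of $I$ are precisely the ideals $\kk[x]\cdot P + \langle x_i \mid i\in\bDelta\rangle$ as $P$ ranges over the minimal primes of $I(\rho)$; uniqueness of $P_0$ then forces $I$ to have a single minimal associated prime. Having verified that $I$ is cellular with a unique minimal prime and $\Memb(I)=0$, I would conclude that $I$ is primary directly from Corollary~\ref{coro:primaryCellular}.

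Both directions are short precisely because the heavy lifting — the characterization of (minimal) associated primes of a cellular binomial ideal and the equivalence between $\Memb(I)=0$ and unmixedness — has already been done. I do not expect a serious obstacle; the point requiring the most care is the bookkeeping about the base field. Note that neither direction invokes Theorem~\ref{thm:latticePrimaryDecomposition} or passage to $\bar\kk$, so the argument goes through verbatim over an arbitrary field $\kk$. In particular I would resist the temptation to prove the reverse implication by enumerating \emph{all} associated primes via Corollary~\ref{coro:associatedPrimes} and Theorem~\ref{thm:latticePrimaryDecomposition}, since the latter requires $\kk$ algebraically closed and, moreover, primality is not stable under field extension (a single associated prime of $I$ may split over $\bar\kk$); routing the proof through the unique-minimal-prime criterion of Corollary~\ref{coro:primaryCellular} sidesteps this issue entirely.
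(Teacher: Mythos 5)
Your proof is correct and follows essentially the same route as the paper's: primariness gives cellularity and unmixedness (hence $\Memb(I)=0$ by Corollary~\ref{coro:zeroMemb}), and the converse is exactly Corollary~\ref{coro:primaryCellular} applied after noting that $I$ has a unique minimal prime by Lemma~\ref{lemma:minimalPrimesOfCellular}. The only divergence is in showing $I\cap\kk[\NN^\Delta]$ is primary in the forward direction: you invoke the general fact that contractions of primary ideals are primary, whereas the paper argues that a non-primary lattice ideal, being unmixed, would have several minimal primes and hence so would $I$ by Lemma~\ref{lemma:minimalPrimesOfCellular}; both steps are valid and equally field-independent.
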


\begin{proof}
If $I$ is primary then it is unmixed and $\Delta$-cellular for some $\Delta$, so by
Corollary~\ref{coro:zeroMemb}, we have $\Memb(I)=0$. Lattice ideals have no
embedded primes; thus, if $I \cap \kk[\NN^\Delta]$ were not primary, it would 
have more than one minimal prime, and therefore so would $I$ by
Lemma~\ref{lemma:minimalPrimesOfCellular}. 
The converse is Corollary~\ref{coro:primaryCellular}. 
\end{proof}

The final result in this section combines our computation of the hull
of a cellular 
binomial ideal, Theorem~\ref{thm:cellularHull}, with~\cite[Theorem~7.1']{ES}, one of the core
statements in that article, and its improvement~\cite[Theorem~3.2]{OS}.

If $q$ is a positive integer and $b = t_1-t_2$ is a binomial, where $t_1, t_2$ are
terms (that is, products of constants times monomials), set
$b^{[q]} = t_1^q - t_2^q$. Note
that this binomial is well-defined only up to a constant multiple, as changing the order
of the terms of $b$ makes a difference. 
If $b$ has only one term, set $b^{[q]} = b^q$, its ordinary $q$th power.
For $I$ a binomial ideal, we define its \emph{$q$th quasipower} to be the ideal 
$I^{[q]} = \< b^{[q]} \mid b \in I \text{ is a binomial}\>$.

\begin{corollary}
\label{coro:noHullInESTheorem}
 Let $I$ be a binomial ideal in $\kk[x]$, where $\kk$ is algebraically
  closed. If $P$ is an associated prime of $I$, write $\Delta(P)$ for
  the set of nonzerodivisor variables of $P$.
\begin{enumerate}[leftmargin=*]
\item If $\chark = p > 0$, and $q=p^e$ is sufficiently large, a
  minimal primary decomposition of $I$ into binomial ideals is given by
\[
I  = \bigcap_{P \in {\rm Ass}(\kk[x]/I)} \bigg[ \bigg( (I + P^{[q]}): (\prod_{i\in
  \Delta(P)} x_i)^\infty \bigg) + \Memb \bigg( (I + P^{[q]}): (\prod_{i\in
  \Delta(P)} x_i)^\infty \bigg) \bigg].
\]
\item If $\chark = 0$,  a
  minimal primary decomposition of $I$ into binomial ideals is given by
\begin{equation*}
\begin{split}
\pushQED{\qed}
I  = \bigcap_{P \in {\rm Ass}(\kk[x]/I)} 
\bigg[ \bigg( \big(
I   + (P \cap & \kk[\NN^{\Delta(P)}]) \big)  :
(\prod_{i \in \Delta(P)} x_i )^\infty \bigg) + \\
& \Memb \bigg( \big(
I + (P \cap \kk[\NN^{\Delta(P)}]) \big) : 
(\prod_{i \in \Delta(P)}x_i)^\infty \bigg)
\bigg].  \qedhere
\popQED
\end{split}
\end{equation*}
\end{enumerate}
\end{corollary}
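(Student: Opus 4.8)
The plan is to obtain this statement by feeding the hull computation of Theorem~\ref{thm:cellularHull} into the binomial primary decomposition of Eisenbud and Sturmfels. Recall that \cite[Theorem~7.1']{ES}, as sharpened by \cite[Theorem~3.2]{OS}, produces for an algebraically closed $\kk$ a minimal primary decomposition $I = \bigcap_{P \in \Ass(\kk[x]/I)} Q_P$ in which each $P$-primary component $Q_P$ is expressed as the \emph{hull} of an explicit binomial ideal $J_P$ depending on $P$. The whole content of the corollary is then to identify $J_P$, to verify that it is cellular, and to replace the opaque operation $\hull(J_P)$ by the explicit expression $J_P + \Memb(J_P)$ supplied by Theorem~\ref{thm:cellularHull}.

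First I would pin down $J_P$. Writing $\Delta = \Delta(P)$ for the nonzerodivisor variables of $P$, so that $\bDelta$ indexes the variables lying in $P$, the ideal whose hull gives $Q_P$ is
\[
J_P = \bigg( (I + P^{[q]}) : (\prod_{i \in \Delta} x_i)^\infty \bigg) \qquad (\chr(\kk) = p>0,\ q = p^e \gg 0),
\]
respectively
\[
J_P = \bigg( \big(I + (P \cap \kk[\NN^{\Delta}])\big) : (\prod_{i \in \Delta} x_i)^\infty \bigg) \qquad (\chr(\kk) = 0),
\]
where in the second case $P \cap \kk[\NN^\Delta] = I(\chi)$ is the lattice-ideal part of $P = \kk[x]\cdot I(\chi) + \<x_i \mid i \in \bDelta\>$. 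Next I would check that $J_P$ is $\Delta$-cellular: saturating with respect to $\prod_{i\in\Delta} x_i$ makes the variables indexed by $\Delta$ nonzerodivisors modulo $J_P$, while, since by the Eisenbud--Sturmfels/Ojeda--Piedra construction $J_P$ has $P$ as its unique minimal prime and $x_j \in P$ for every $j \in \bDelta$, each such $x_j$ is nilpotent modulo $J_P$. (In positive characteristic this nilpotency is already visible, as $x_j^q = x_j^{[q]} \in P^{[q]} \subseteq J_P$.) Hence $J_P$ is $\Delta$-cellular.

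With cellularity in hand, Theorem~\ref{thm:cellularHull} applies verbatim to each $J_P$, giving $Q_P = \hull(J_P) = J_P + \Memb(J_P)$, which is precisely the intersectand displayed in each of the two cases; substituting into $I = \bigcap_P \hull(J_P)$ yields the asserted formulas. Minimality is then inherited from the Eisenbud--Sturmfels/Ojeda--Piedra decomposition: the intersectands are primary to the pairwise distinct primes $P \in \Ass(\kk[x]/I)$, one for each associated prime, so the decomposition has distinct radicals and as many components as there are associated primes, hence is automatically irredundant.

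I expect the main obstacle to be matching the precise shape of the ideal $J_P$ appearing in \cite[Theorem~7.1']{ES} and \cite[Theorem~3.2]{OS} with the saturated ideals displayed above, and, relatedly, confirming that $J_P$ is genuinely $\Delta(P)$-cellular with unique minimal prime $P$ so that Theorem~\ref{thm:cellularHull} is applicable. This is essentially bookkeeping once the references are lined up, but care is needed in the characteristic-zero case, where there is no quasipower to exhibit the nilpotency of the variables indexed by $\bDelta(P)$ directly, and one must instead extract it from the fact that $\hull(J_P)$ is the $P$-primary component.
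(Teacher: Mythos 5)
Your proposal is correct and follows exactly the route the paper intends: the corollary carries no separate proof precisely because it is the direct combination of \cite[Theorem~7.1']{ES} (with the improvement \cite[Theorem~3.2]{OS} in characteristic zero) with Theorem~\ref{thm:cellularHull}, which is what you carry out. The details you supply --- identifying $J_P$, verifying that it is $\Delta(P)$-cellular with unique minimal prime $P$, and noting that minimality is inherited from the Eisenbud--Sturmfels decomposition --- are the right bookkeeping.
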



\end{document}